\documentclass[10pt]{article}

\usepackage{color}
\usepackage{graphicx}
\usepackage{amsmath,amsfonts,amssymb,graphics,amsthm}
\usepackage{hyperref}
\usepackage{tabularx}
\usepackage{enumerate}
\usepackage{bbm}
\usepackage[toc,page]{appendix}
\usepackage{epstopdf}
\usepackage{fullpage}
\usepackage{natbib}
\bibpunct{(}{)}{;}{a}{,}{,}

\bibliographystyle{plainnat}

\numberwithin{equation}{section}

\newtheorem{Theorem}{Theorem}[section]
\newtheorem{Definition}[Theorem]{Definition}
\newtheorem{Lemma}[Theorem]{Lemma}
\newtheorem{Corollary}[Theorem]{Corollary}
\newtheorem{Proposition}[Theorem]{Proposition}

\newcommand{\R}{{\mathbb R}}

\newcommand{\CG}{{\mathcal G}}
\newcommand{\CH}{{\mathcal H}}

\newcommand{\noi}{\noindent}

\newcommand{\comment}[1]{}

\newcommand{\E}{{\bf E}}
\newcommand{\bP}{{\bf P}}
\newcommand{\px}{{\bf P}_x}
\newcommand{\pz}{{\bf P}_z}
\newcommand{\pxy}{{\bf P}_x^y}

\newcommand{\leb}[1]{\mathcal{L} (#1)}

\newcommand{\one}[1]{{\bf 1}(#1)}

\newcommand{\la}{\lambda}
\newcommand{\eps}{\epsilon}

\newcommand{\tpd}{ T_{\partial D}}
\newcommand{\wtd}{W_{\tpd}}
\newcommand{\larr}{\la[\rho,\rho']}
\newcommand{\tQ}{{\wt Q}}

\newcommand{\I}{\mathcal I}
\newcommand{\ol}{\overline}

\newcommand{\op}{\operatorname}

\newcommand{\ep}{\epsilon}
\newcommand{\rta}{\rightarrow}

\newcommand{\wt}{\widetilde}
\newcommand{\wh}{\widehat} 
\newcommand{\mcl}{\mathcal}

\newcommand{\N}{{\bf N}}
\newcommand{\Z}{{\bf Z}}

\begin{document}

\title{How round are the complementary components of planar Brownian motion?}
\author{Nina Holden\thanks{Holden's research was done during a visit to Microsoft Research, Redmond, WA.} \and \c Serban Nacu \and Yuval Peres \and Thomas S.\ Salisbury\thanks{Salisbury's research is supported in part by NSERC}}
\maketitle

\begin{abstract}
Consider a Brownian motion $W$ in ${\bf C}$ started from $0$ and run for time 1. Let $A(1), A(2),\dots$ denote the bounded connected components of ${\bf C}-W([0,1])$. Let $R(i)$ (resp.\ $r(i)$) denote the out-radius (resp.\ in-radius) of $A(i)$ for $i\in\bf N$. Our main result is that $\E[\sum_i R(i)^2|\log R(i)|^\theta ]<\infty$ for any $\theta<1$. We also prove that $\sum_i r(i)^2|\log r(i)|=\infty$ almost surely. These results have the interpretation that most of the components $A(i)$ have a rather regular or round shape.
\end{abstract}

\tableofcontents

\section{Introduction}

Consider a Brownian motion $W$ in ${\bf C}$, started from $0$ and killed at time 1. Let $A={\bf C} - W([0,1])$ denote the complement of the path of $W$, and let $A(1)$, $A(2)$, \dots be the connected components of $A$. See Figure \ref{fig-bm}. We are interested in the geometry of the sets $A(i)$, in particular in whether they are round and regular, or thin and irregular. We will show that, in a certain sense, the former is the case. Let $r(i)$ and $R(i)$ be the in-radius and out-radius of $A(i)$, respectively, that is, the radius of the largest disk contained in $A(i)$ and the radius of the smallest disk containing $A(i)$. Denoting the area of $A(i)$ by $\leb{A(i)}$, we have that
\begin{equation}
\pi r(i)^2\le \leb{A(i)} \le\pi R(i)^2.
\label{eq1}
\end{equation}
We think of $A(i)$ as being regular or round when these bounds are fairly good, and we will prove that this is indeed the case, in the sense that for $\theta\in\R$, a.s.,
\begin{equation}
\sum_{i=1}^\infty r(i)^{2}|\log r(i)|^\theta<\infty
\qquad\Leftrightarrow\qquad
\sum_{i=1}^\infty R(i)^{2}|\log R(i)|^\theta<\infty
\qquad\Leftrightarrow\qquad
\theta<1,
\label{eq41}
\end{equation}
and that the same result holds when we take the expectation of the left side in the first two inequalities.
The result \eqref{eq41}, and the analogous result with expectations, is immediate from Theorem \ref{thm2}, which is our main result, and Proposition \ref{prop5}.
\begin{Theorem}
With the above notation, for any $\theta<1$,
\begin{equation}
\E\left[\sum_{i=1}^\infty R(i)^2|\log R(i)|^\theta\right]<\infty.
\end{equation}
\label{thm2}
\end{Theorem}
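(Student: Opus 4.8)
The plan is to reduce the estimate to a first--moment bound on the number of complementary components at each dyadic scale and then to sum, the scale--by--scale bound being proved by localizing in space and tracking the out--radius of the component of a fixed point as the path is run.

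First I would set $N_k:=\#\{i:R(i)\in[2^{-k-1},2^{-k})\}$. The finitely many components with $R(i)\ge1$ are harmless: their number and their total squared out--radius are dominated by fixed powers of $\sup_{[0,1]}|W|$, which has Gaussian tails, so their contribution to the expectation is finite. For the rest, $\sum_i R(i)^2|\log R(i)|^\theta\asymp\sum_{k\ge1}2^{-2k}k^\theta N_k$, so it suffices to show $\E[N_k]\le C\,2^{2k}k^{-2}$; this gives $\E[\sum_i R(i)^2|\log R(i)|^\theta]\lesssim\sum_k k^{\theta-2}$, which converges precisely when $\theta<1$ --- this is the only place the hypothesis is used, and a bound $\E[N_k]\le C_\eta 2^{2k}k^{-1-\eta}$ valid for each $\eta<1$ would do as well. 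To bound $\E[N_k]$, cover $\mathbf C$ by disks $B_j=B(x_j,2^{-k})$ of bounded overlap; a component with $R(i)\in[2^{-k-1},2^{-k})$ has diameter of order $2^{-k}$ and so meets some $B_j$, whence $\E[N_k]\le\sum_j\E[Z_j]$ with $Z_j$ the number of bounded components of out--radius in $[2^{-k-1},2^{-k})$ meeting $B_j$. Writing $\delta=2^{-k}$, it then suffices to prove $\E[Z_j]\le C\,k^{-2}$ for $x_j$ in a fixed large disk, the faraway disks contributing a lower--order term by the Gaussian tail of $\sup|W|$ together with the fact that the $\delta$--neighbourhood of $W([0,1])$ has expected area $O(1/\log(1/\delta))$.

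For the local estimate, fix $z$ near $x_j$ and let $\operatorname{rad}(z,t)$ be the out--radius of the component of $z$ in $\mathbf C\setminus W([0,t])$, which is $+\infty$ until $z$ is surrounded and nonincreasing thereafter. I would decompose $\{\operatorname{rad}(z,1)\in[\delta,2\delta)\}$ at the stopping time $\tau:=\inf\{t:\operatorname{rad}(z,t)\le4\delta\}$: its probability is at most $\bP(\tau\le1)$ --- the probability that the path disconnects $z$ down to scale $\asymp\delta$, which is $\asymp1/\log(1/\delta)$ since it is comparable to the (logarithmic) capacity of a $\delta$--disk --- times the conditional probability, given $\mathcal F_\tau$, that the continuation (a Brownian path from a point within $\asymp\delta$ of $z$, run for the remaining time $\asymp1$) does not cut $z$'s component any further below scale $\delta$; by the gambler's--ruin estimate for the radial part this last probability is $\asymp1/\log(1/\delta)$, because the continuation re--enters the $\delta$--scale neighbourhood of $z$ and cuts again unless it first escapes to macroscopic distance. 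Thus $\bP(\operatorname{rad}(z,1)\asymp\delta)\asymp(\log(1/\delta))^{-2}$, and integrating over $z$ gives the \emph{area--weighted} bound $\E\big[\sum_i\leb{A(i)}\,\one{R(i)\asymp\delta}\big]=\int\bP(\operatorname{rad}(z,1)\asymp\delta)\,dz=O(k^{-2})$.

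The remaining step --- upgrading this area--weighted bound to $\E[Z_j]\lesssim k^{-2}$ --- is where I expect the main difficulty to lie, and it is genuinely the quantitative content of ``the components are round.'' Because a thin component of out--radius $\asymp\delta$ may have area far below $\delta^2$, one must separately establish $\bP(Z_j\ge1)\lesssim k^{-2}$ and $\E[Z_j\mid Z_j\ge1]=O(1)$. For the multiplicity I would again use the structure of the path near $x_j$: it makes $\Theta(\log(1/\delta))$ essentially independent macroscopic excursions through a neighbourhood of $x_j$, and $\ell$ disjoint surviving components of diameter $\asymp\delta$ clustered there force those excursions to leave $\ell$ ``parallel'' uncut channels, each an additional non--disconnection constraint on every excursion, so the probability decays geometrically in $\ell$. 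The two sharp points that must be handled carefully are: obtaining the exponent $2$ on the logarithm (a crude chaining over the $\Theta(\log(1/\delta))$ excursions is inadequate; one must genuinely exploit the two independent ``$1/\log$'' costs identified above, which in turn rests on sharp Brownian disconnection estimates), and making the multiplicity bound uniform in $k$ and in the location of $x_j$ without degrading the $k^{-2}$. I expect the interface between this bookkeeping and the scaling analysis to be the technical heart of the proof.
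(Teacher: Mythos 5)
Your dyadic decomposition, your calibration $\bP(\operatorname{rad}(z,1)\asymp\delta)\asymp(\log 1/\delta)^{-2}$, and your identification of the area-to-count step as the real difficulty are all sound, and in fact correctly locate the quantitative content of the theorem. But that step is not filled, and the gap is not merely technical: passing from the area-weighted bound $\E\big[\sum_i\leb{A(i)}\one{R(i)\asymp\delta}\big]\lesssim k^{-2}$ to $\E[Z_j]\lesssim k^{-2}$ already presupposes that a typical component of out-radius $\asymp\delta$ has area $\gtrsim\delta^2$, i.e.\ presupposes roundness --- exactly what you are trying to establish. Concretely, Markov gives $\bP(Z_j\ge1)\le\E[Z_j]$ but there is no converse: a thin component of out-radius $\asymp\delta$ has small area, so $\int_{B_j}\bP(\operatorname{rad}(z,1)\asymp\delta)\,dz$ can be much smaller than $\bP(Z_j\ge1)$, and the ``parallel channels'' heuristic has no rigorous substrate as stated --- there is no natural labelling of the surviving through-components along which one could run an inductive geometric-decay argument, and a crude union over the $\Theta(\log 1/\delta)$ excursions destroys the factor $k^{-2}$, a danger you yourself flag.

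The paper sidesteps the count/area tension entirely by working with the eccentricity $R_z^2/\leb{A_z}$ rather than with counts. Writing $\sum_i R(i)^2(\cdots)=\sum_i\frac{R(i)^2(\cdots)}{\leb{A(i)}}\leb{A(i)}\preceq\int_{\bf C}\frac{R_z^2(\cdots)}{\leb{A_z}}\,dz$ turns the component count into an integral over $z$, so the object to bound is $\E\big[R_z^2(|\log R_z|+1)^\theta/\leb{A_z}\big]$ for each fixed $z$; thin components are automatically penalized by the $1/\leb{A_z}$ factor, and no separate multiplicity estimate is needed. The decisive technical input is Proposition~\ref{prop3}: for the through-components $\mcl G^*$ of an annulus at scale $\eta$ around $z$, $\E\big[\sum_{G\in\mcl G^*}\eta^2/\leb{G}\big]\preceq 1/|\log(\eta\wedge 1/2)|$. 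Its proof is where the rigorous version of your ``parallel channels'' idea lives: one enumerates candidate components by lattice paths (Lemma~\ref{prop:UV}) so that conditioning on a left boundary $\lambda$ decouples the upcrossings to its right, then uses a uniform exponential tail for the number of polar rectangles a conditioned Brownian excursion can hit (Corollary~\ref{cor_tail_est}, Proposition~\ref{exp_tail_for_la_domain}) to prove the inverse-area bound $\E[\leb{S}^{-1}]\preceq n^3$ (Lemma~\ref{poly_bound}), and finally sums the polynomial bound against the exponentially decaying probability of $n$ upcrossings. Combined with the elementary estimate $\bP[\wt\tau_{k,z}<\tau]\preceq(|\log\eta|+1)^{-1}$ (Lemma~\ref{prop8}), this delivers the two independent factors of $1/\log$ that you anticipated but could not produce. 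Your approach and the paper's agree on the expected orders of magnitude and on where the difficulty lies, but you have not supplied the mechanism --- the lattice-path enumeration plus the polynomial inverse-area bound --- that actually proves roundness, so the proposal as written does not constitute a proof.
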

\begin{Proposition}
With the above notation, almost surely
\begin{equation}
\begin{split}
\sum_{i=1}^\infty r(i)^2|\log r(i)|=\infty. 
\end{split}
\end{equation}
\label{prop5}
\end{Proposition}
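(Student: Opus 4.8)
The plan is to produce, at each small dyadic scale $\delta_k:=2^{-k}$, a large number of complementary components of in-radius comparable to $\delta_k$. Fix a compact annulus $\mathcal R=\{z:\tfrac14\le|z|\le\tfrac12\}$ and a large constant $C_0$; let $\Lambda_k\subseteq\mathcal R$ be a maximal $\delta_k$-separated set, so $\#\Lambda_k\asymp\delta_k^{-2}$, and set
\[
N_k:=\#\bigl\{y\in\Lambda_k:\ \exists\,i\ \text{with}\ r(i)\in[\delta_k,2\delta_k)\ \text{and}\ A(i)\subseteq B(y,C_0\delta_k)\bigr\}.
\]
The point of the nesting $A(i)\subseteq B(y,C_0\delta_k)$ is that whether $y$ is counted depends only on $W[0,1]\cap\overline{B(y,C_0\delta_k)}$: this set is compact and contained in $\overline{B(y,C_0\delta_k)}$, so every bounded component of its complement already lies in $B(y,C_0\delta_k)$, is thus disjoint from all of $W[0,1]$, and is a genuine component of ${\bf C}-W[0,1]$. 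Since the ranges $[\delta_k,2\delta_k)$ are disjoint, a counted component contributes $r(i)^2|\log r(i)|\ge c\,\delta_k^2k$, and each component is counted $\asymp C_0^2$ times, Proposition~\ref{prop5} reduces to proving: (i) $\E[N_k]\gtrsim\delta_k^{-2}k^{-2}$ for all large $k$; and (ii) almost surely $N_k\gtrsim\delta_k^{-2}k^{-2}$ for all large $k$. Indeed (ii) gives $\sum_i r(i)^2|\log r(i)|\gtrsim\sum_k N_k\delta_k^2k\gtrsim\sum_k k^{-1}=\infty$ a.s.

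For (i) I would use the standard planar hitting estimate together with Brownian scaling. Uniformly over $y\in\mathcal R$ one has $\bP(W[0,1]\ \text{meets}\ B(y,\delta_k))\asymp 1/k$, and with probability $\asymp 1/k^2$ the path enters $B(y,C_0\delta_k)$ exactly once, at a time $\le 1-C_0\delta_k^2$, and never returns to $B(y,C_0\delta_k)$. Conditionally on this ``single clean visit'', the excursion of $W$ inside $B(y,C_0\delta_k)$, rescaled by $\delta_k^{-1}$, has a fixed law, and with probability bounded below it has a double point producing a sub-loop encircling a disc of radius in $[\delta_k,2\delta_k)$ strictly inside $B(y,C_0\delta_k)$ (for $C_0$ large); by the remark above this yields a component counted by $N_k$. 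Summing over the $\asymp\delta_k^{-2}$ points of $\Lambda_k$ gives (i). (In fact $\E[N_k]\asymp\delta_k^{-2}k^{-2}$, consistent with the sharpness of Theorem~\ref{thm2} at $\theta=1$.)

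The heart of the argument is (ii), and this is where I expect the main difficulty. The events ``$B(y,C_0\delta_k)$ is counted'' are strongly positively correlated for nearby $y$ — if $W$ is very active near a region then essentially all of its $\delta_k$-balls get counted — so $\operatorname{Var}(N_k)\asymp\E[N_k]^2$ and a naive second moment is useless. The remedy is to condition on the geometry of $W$ at the intermediate scale $\sqrt{\delta_k}$. Let $\mathcal G_k:=\sigma(W_{j\delta_k/100}:j\ge0)$; given $\mathcal G_k$ the path $W$ is a concatenation of independent Brownian bridges, each of duration $\delta_k/100$ and hence spatial displacement $\asymp\sqrt{\delta_k}$. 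Cover $\mathcal R$ by $\asymp\delta_k^{-1}$ balls $Q$ of radius $\asymp\sqrt{\delta_k}$ and write $N_k\ge N_k':=\sum_Q X_Q$, where $X_Q$ counts those $y\in\Lambda_k$ with $B(y,C_0\delta_k)\subseteq Q$ counted by $N_k$ (the omitted $y$'s lie near the overlaps of the $Q$'s and number only $O(\delta_k^{-3/2})=o(\E[N_k])$, so still $\E[N_k']\asymp\E[N_k]$). Two observations: $X_Q$ depends only on the bridges entering $Q$, and bridges entering $Q$ are disjoint from those entering $Q'$ whenever $\operatorname{dist}(Q,Q')\gtrsim\sqrt{\delta_k}$, so conditionally on $\mathcal G_k$ the $X_Q$ are independent apart from the $O(1)$ neighbours of each $Q$; and trivially $0\le X_Q\le\#\{y\in\Lambda_k:B(y,C_0\delta_k)\subseteq Q\}\asymp\delta_k^{-1}=2^k$. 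Hence, writing $Q\sim Q'$ for ``equal or adjacent'',
\[
\operatorname{Var}(N_k'\mid\mathcal G_k)=\sum_{Q\sim Q'}\operatorname{Cov}(X_Q,X_{Q'}\mid\mathcal G_k)\ \le\ \sum_{Q\sim Q'}\bigl|\operatorname{Cov}(X_Q,X_{Q'}\mid\mathcal G_k)\bigr|\ \lesssim\ 2^k\,\E[N_k'\mid\mathcal G_k],
\]
using $\E[X_Q^2\mid\mathcal G_k]\le 2^k\E[X_Q\mid\mathcal G_k]$ and Cauchy--Schwarz for the neighbour terms. Chebyshev then gives $\bP(N_k'<\tfrac12\E[N_k'\mid\mathcal G_k]\mid\mathcal G_k)\lesssim 2^k/\E[N_k'\mid\mathcal G_k]$. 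The remaining ingredient is an almost sure non-degeneracy of the small-scale activity of $W$: a.s.\ $\E[N_k'\mid\mathcal G_k]\ge c\,\E[N_k']$ for all large $k$, with summable exceptional probabilities. I would deduce this from the a.s.\ regularity of Brownian motion at scale $\sqrt{\delta_k}$ — in effect a law of large numbers asserting that the occupation/crossing structure of $W$ at that scale is, for all large $k$, within fixed constants of its typical value. Granting it, $\bP(N_k'<c'\delta_k^{-2}k^{-2})\lesssim k^2/2^k+(\text{summable})$, and Borel--Cantelli gives (ii).

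In summary, the two steps needing genuine work are the excursion/double-point estimate behind (i) — routine in spirit but requiring a careful planar-topology argument that the encircled disc has the prescribed size and is strictly interior — and, more importantly, the a.s.\ lower bound $\E[N_k'\mid\mathcal G_k]\gtrsim\E[N_k']$, a uniform-in-$k$ almost sure law of large numbers for $W$ at scale $\sqrt{\delta_k}$; this last point is the real crux.
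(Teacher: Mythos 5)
Your approach is genuinely different from the paper's. The paper derives Proposition~\ref{prop5} from two imported results: Werner's empirical shape law (Theorem~14 of \cite{wwthesis}), which gives that a positive asymptotic fraction of the components ordered by area satisfy $10\,r(i)\ge\leb{A(i)}^{1/2}$, and Le~Gall's almost-sure area asymptotics (recorded as Lemma~\ref{prop4}), which gives $i(\log i)^2\leb{A(i)}\to 2\pi$ a.s. Combining these yields, for a positive fraction of $i\in[n,2n)$, $r(i)^2|\log r(i)|\succeq (n\log n)^{-1}$, and summing over dyadic $n$ gives divergence. Your proposal instead attempts a self-contained multi-scale second-moment argument, which, if completed, would sidestep both black boxes.

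Step~(i) looks morally right (the $k^{-2}$ comes from two separate $1/k$ factors: hitting $B(y,C_0\delta_k)$ at all, and then escaping cleanly through $\asymp k$ dyadic scales without re-entering), though you leave the excursion-produces-a-component-of-the-right-size claim unproven, as you acknowledge. The substantive gap is in step~(ii), and it is not merely a technicality. Your conditional-variance bound reduces the a.s.\ lower bound on $N_k'$ to the a.s.\ lower bound $M_k:=\E[N_k'\mid\mathcal G_k]\ge c\,\E[N_k']$ eventually, with summable exceptional probabilities. But you yourself note that $\operatorname{Var}(N_k')\asymp\E[N_k']^2$, while the conditional-variance estimate gives $\E[\operatorname{Var}(N_k'\mid\mathcal G_k)]\preceq 2^k\E[N_k']=o(\E[N_k']^2)$; by the variance decomposition this forces $\operatorname{Var}(M_k)\asymp\E[M_k]^2$ as well. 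So $M_k$ has macroscopic relative fluctuations, and the required one-sided a.s.\ bound cannot come from a variance argument --- it is a genuine left-tail estimate on a small-scale geometric functional of $W$. Your appeal to ``a.s.\ regularity of Brownian motion at scale $\sqrt{\delta_k}$'' names the desired conclusion rather than proving it, and a statement of this strength is essentially what Le~Gall's almost-sure convergence (and, for the shape part, Werner's result) establish with considerable work. In short, the proposal's crux is unfilled and is plausibly of comparable depth to the results the paper quotes; the paper's route is short precisely because it imports them. A smaller issue worth flagging: bridges of duration $\delta_k/100$ are not literally confined to a $\sqrt{\delta_k}$-ball, so ``$X_Q$ depends only on bridges entering $Q$'' requires a truncation, which inflates the number of ``neighbouring'' blocks by a logarithmic factor --- harmless for the Borel--Cantelli step, but it should be said.
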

\begin{figure}[ht]
\begin{center}
\includegraphics[scale=1]{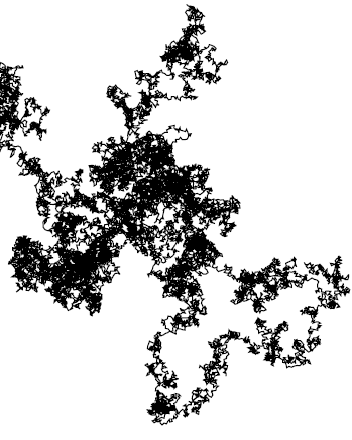}\quad
\includegraphics[scale=1]{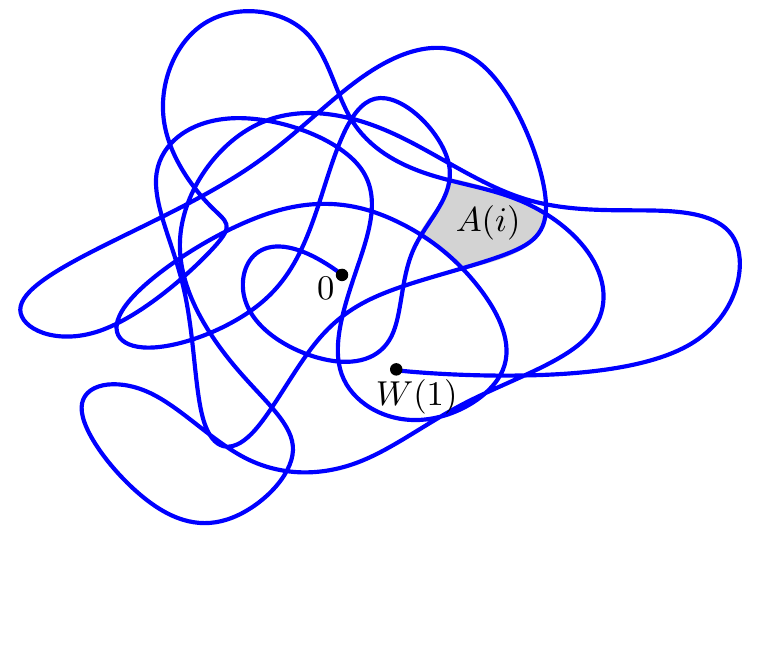}
\end{center}
\caption{Left: A planar Brownian motion (by Christopher J.\ Bishop). Right: A schematic drawing of a planar Brownian motion. The set $A(i)$ on the figure is one of the complementary connected components of the curve.
}
\label{fig-bm}
\end{figure}

Several authors have previously studied the properties of the connected components $A(i)$. \cite{mount} and \cite{legall} obtained results on the areas of the $A(i)$. In particular, Le Gall showed that the number $\mcl N(\eps)$ of components that have area at least $\eps$ satisfies $\lim_{\eps\rta 0} \eps |\log \eps|^2\mcl N(\eps) =2\pi $ (see also \cite{legall2} for additional details of the proof and slightly sharper estimates). \cite{wwthesis} derived a law ${\cal L}_1$ for the asymptotic shape of connected components, showed that it is related to the law of the infinite component of the Brownian loop, and proved that it can be computed from any one instance of a Brownian motion. \citet*{lsw5} 
used the SLE process to prove that the outer boundary of a Brownian loop has dimension 4/3. \cite{h14} studied the asymptotic number of connected components in the complement of a Wiener sausage in the plane. \cite{gh14} studied the shape of the complement of a Brownian motion on a torus of dimension $\geq 3$. Properties of the complement of a random walk have been studied by e.g.\ \cite{b08,ch96,wch97}.

The particular property of the sets $A(i)$ established in Theorem \ref{thm2} was motivated by a question raised by Chris Bishop in a conversation with the third author of this paper. \citet*{bishop97} 
proved that if $\Lambda$ is the limit set of an analytically finite Kleinian group, $\Omega(i)$ for $i\in\N$ is an enumeration of the components of $\Omega=S^2-\Lambda$, and $\wh R(i)$ is the diameter of $\Omega(i)$ for all $i\in\N$, then $\sum_{i} \wh R(i)^2<\infty$. He asked whether the same property holds for the complement of a Brownian motion, and we answer his question positively in Theorem \ref{thm2}.

The analogue of Theorem \ref{thm2} where $W$ is killed upon hitting the unit disk, rather than at time 1, also holds. The argument is available from the authors, upon request.

We will deduce Theorem \ref{thm2} from Proposition \ref{prop1}. Recall that the out-radius $R_z(\wt A)$ with center $z\in \bf C$ of a bounded set $\wt A\subset\bf C$ is defined by $R_z(\wt A):=\inf\{r\geq 0\,:\,\wt A\subset B(z,r) \}$, where $B(z,r)$ is the ball of center $z$ with radius $r$. In particular, the out-radius $R(\wt A)$ of $\wt A$ (with no specified center) is given by $R(\wt A)=\inf\{R_z(\wt A)\,:\,z\in \bf C\}$. If $\wt A$ is unbounded we define the out-radius of $\wt A$ relative to any point to be 0. 

\begin{Proposition}
Let $\tau$ be a unit rate exponential random variable independent of the planar Brownian motion $W$. For any $z\in\bf C$, let $R_z>0$ be the out-radius relative to $z$ of the component $A_z$ of ${\bf C}-W([0,\tau])$ containing $z$, which is a.s.\ well-defined for almost all $z\in\bf C$. Let $\leb{A_z}$ be the Lebesgue measure of $A_z$. Then for any $\theta<1$,
\begin{equation}
\int_{{\bf C}} \E\left[\frac{R_z^2 (|\log R_z| + 1)^\theta}{\mcl L(A_z)}  \right]\,dz<\infty.
\label{eq39}
\end{equation}
\label{prop1}
\end{Proposition}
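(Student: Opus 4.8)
The plan is to reduce \eqref{eq39} to an upper bound on the expected number $N_\rho$ of bounded complementary components of out-radius at least $\rho$, and then to prove that bound by treating ``round'' components through Le~Gall's area asymptotics and ``thin'' ones through a direct estimate on the narrow channels a planar Brownian motion can cut out.

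Write $A(1),A(2),\dots$ for the bounded components of $\mathbf{C}-W([0,\tau])$ and $R(1),R(2),\dots$ for their out-radii, and put $M:=\max_{s\le\tau}|W_s|$. For $z\in A(i)$ one has $R(i)\le R_z\le 2R(i)$, since recentring at a point of $A(i)$ a ball containing $A(i)$ at most doubles its radius; hence the value of $x\mapsto x^2(|\log x|+1)^\theta$ at $R_z$ is, uniformly over $z\in A(i)$, comparable to its value at $R(i)$, with constants depending only on $\theta$. By Fubini and $\int_{\mathbf{C}}(\cdot)\,dz=\sum_i\int_{A(i)}(\cdot)\,dz$, the left side of \eqref{eq39} is therefore comparable to $\E\bigl[\sum_i R(i)^2(|\log R(i)|+1)^\theta\bigr]$. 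Decomposing dyadically according to $R(i)\in[2^{-k},2^{-k+1})$, summing by parts, and disposing of the components with $R(i)\ge1$ by a routine tail bound (using that $W([0,\tau])\subset B(0,M)$ and $M$ has all moments finite), it suffices to show that for every $\delta>0$ there is a $C_\delta<\infty$ with
\begin{equation}
\E[N_\rho]\le C_\delta\,\rho^{-2}|\log\rho|^{-2+\delta}\quad(0<\rho\le\tfrac12),\qquad N_\rho:=\#\{i:R(i)\ge\rho\}.
\label{eqplan}
\end{equation}
Indeed, given $\theta<1$ one picks $\delta<1-\theta$, and then $\sum_{k\ge1}2^{-2k}(k+1)^\theta\E[N_{2^{-k}}]\lesssim\sum_{k\ge1}(k+1)^\theta k^{\delta-2}<\infty$.

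To prove \eqref{eqplan}, fix $\delta>0$ and set $\epsilon:=\rho^{2}|\log\rho|^{-\delta/2}$ and $s:=\epsilon/\rho=\rho|\log\rho|^{-\delta/2}$. The components $A(i)$ with $\leb{A(i)}\ge\epsilon$ are at most as many as all complementary components of area $\ge\epsilon$, whose expected number is $\lesssim\epsilon^{-1}|\log\epsilon|^{-2}\asymp\rho^{-2}|\log\rho|^{-2+\delta/2}$ by Le~Gall's theorem~\cite{legall} --- applied to $W$ killed at the exponential time $\tau$ rather than at time $1$, which only alters the constant, as one checks by conditioning on $\tau$ and using Brownian scaling. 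It remains to control the \emph{thin} components, those with $R(i)\ge\rho$ but $\leb{A(i)}<\epsilon$. Such an $A(i)$ is simply connected (complementary components of a connected set are simply connected), has diameter $\ge\rho$, and area $<\epsilon$; averaging the length of $A(i)\cap\partial B(u,t)$ over $t$ in a suitable range, where $u\in A(i)$ almost realizes the diameter, produces a crosscut of $A(i)$ of diameter $\lesssim s$ that splits $A(i)$ into two parts of diameter $\gtrsim\rho$. In other words, the path has pinched the component at scale $s$ while still enclosing a region of diameter $\gtrsim\rho$.

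The remaining --- and main --- task is to bound the expected number of thin components. Cover $B(0,2M)$ by $O(s^{-2})$ balls $Q$ of radius $s$, and charge each thin component to a ball $Q$ meeting one of its pinch points; a short counting argument bounds the number of thin components charged to a fixed $Q$ by $O(V(Q))$, where $V(Q)$ is the number of excursions of $W$ into $Q$ before time $\tau$. By the strong Markov property (after each such excursion the remaining lifetime is again an independent unit exponential), a given excursion through $Q$ is associated with a \emph{new} thin component of out-radius $\ge\rho$ only if, afterwards, $W$ first closes a loop of diameter $\ge\rho$ and then never re-enters the thin sliver it has just enclosed --- an event of conditional probability $\lesssim|\log\rho|^{-2}$ by the harmonic-measure estimate underlying Le~Gall's theorem (the chance that planar Brownian motion surrounds a given region and avoids it forever after is of the order of the squared reciprocal logarithm of the region's diameter, and is only smaller for a long, thin region). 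Hence $\E[\#\{\text{thin components charged to }Q\}]\lesssim|\log\rho|^{-2}\E[V(Q)]$, and summing over the cover, using $\sum_Q\E[V(Q)]=O(s^{-2})$,
\begin{equation}
\E\bigl[\#\{\text{thin components}\}\bigr]\;\lesssim\;s^{-2}|\log\rho|^{-2}\;=\;\rho^{-2}|\log\rho|^{\delta}\,|\log\rho|^{-2}\;=\;\rho^{-2}|\log\rho|^{-2+\delta}.
\label{eqplan2}
\end{equation}
Together with the bound on the fat components this gives \eqref{eqplan}. The crux --- and the step I expect to be the main obstacle --- is the last probabilistic estimate: showing, uniformly in the past, that a thin empty enclosure of macroscopic diameter is created with probability only $\lesssim|\log\rho|^{-2}$, and making the charging bookkeeping rigorous. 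The exponential lifetime is used precisely to keep each strong-Markov restart within the same class and to render all tail contributions exponentially small.
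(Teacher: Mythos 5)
Your reduction of \eqref{eq39} to the counting bound \eqref{eqplan} is valid and is essentially a Fubini-flipped version of how the paper passes between Proposition~\ref{prop1} and Theorem~\ref{thm2}: $R_z\asymp R(i)$ for $z\in A(i)$, so $\int_{\mathbf C}R_z^2(|\log R_z|+1)^\theta/\leb{A_z}\,dz\asymp\sum_i R(i)^2(|\log R(i)|+1)^\theta$, and summation by parts against $\E[N_\rho]$ does the rest. The split into ``fat'' components (area $\ge\epsilon$, handled by Le~Gall) and ``thin'' ones (out-radius $\ge\rho$ but area $<\epsilon$) is a sensible decomposition, and the choice $\epsilon=\rho^2|\log\rho|^{-\delta/2}$ correctly makes the fat-component count $\lesssim\rho^{-2}|\log\rho|^{-2+\delta/2}$.

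The gap is exactly where you flag it, but it is worse than ``the main obstacle'' — it is the entire content of the theorem. The assertion that an excursion through a ball $Q$ of radius $s$ produces a new thin component of out-radius $\ge\rho$ with conditional probability $\lesssim|\log\rho|^{-2}$ has no proof and does not follow from anything in Le~Gall: his estimate concerns the probability that a \emph{fixed point} lies in a component of area $\ge\epsilon$, not the probability that the path \emph{creates a narrow empty sliver} of macroscopic extent, and the event you need depends on the entire past and future of $W$, not merely the post-excursion path. The charging bookkeeping is also unsupported: it is not clear what ``excursion'' means (if excursions of $Q$ are to scale $s$ there are order $|\log s|$ of them and $\sum_Q\E[V(Q)]$ is not $O(s^{-2})$; if to scale $1$ there are $O(1)$ but a single such excursion can wind many times and create several thin slivers), nor why at most $O(V(Q))$ thin components are charged to one $Q$. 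The paper sidesteps all of this by never counting components at all: Proposition~\ref{prop3} works directly with $\E[\sum_G\eta^2/\leb{G}]$ by conditioning on the number of annulus upcrossings, pairing an exponential-in-$n$ decay of $\bP[\mathcal G^*\ne\emptyset]$ against a polynomial-in-$n$ bound on the expected inverse area from Lemma~\ref{poly_bound}, and extracting the final $|\log\eta|^{-1}$ from Lemma~\ref{prop6}. That route avoids ever having to prove a bare-hands bound on the probability of creating a thin sliver, which is precisely the estimate your outline leaves unproved.
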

We will prove Proposition \ref{prop1} in Section \ref{sec:prop2}, and we will prove Proposition~\ref{prop5} in Section \ref{sec:prop5}. We will explain at the very end of the introduction how to deduce Theorem \ref{thm2} from Proposition \ref{prop1}. The idea of the proof of Proposition \ref{prop1} is that, on the event that $A_z$ is bounded for some given $z\in\bf C$, the fraction $R_z^2 / \leb{A_z}$ is nearly scale invariant, so we can essentially condition on the value of $R_z$. We achieve this near scale invariance by counting the number of visits $N$ made by the Brownian motion to a neighborhood of $z$ of diameter approximately $R_z$ and conditioning on $N$. The restrictions of the Brownian motion to each of the $N$ visits are nearly independent, and in order for $\leb{A_z}$ to be small the restriction of the Brownian motion to two of the visits would need to trace each other closely, which is highly unlikely. In order to control the contribution from the logarithmic term in \eqref{eq39} we bound from above the probability that $W$ gets very close to a given point $z$.

Proposition \ref{prop5} will follow from a result of \cite{wwthesis} on the asymptotic law of the shape of $A(i)$, and a result of \cite{legall} on the areas $\leb{A(i)}$. Since $r(i)\leq R(i)$ it is immediate from Theorem \ref{thm2} that a version of the theorem with $r(i)$ instead of $R(i)$ also holds. In Section \ref{sec:prop5} we will, besides giving the proof of Proposition \ref{prop5}, give a short alternative proof of this result by using an estimate of \cite{legall}.

{\bf Notation.} 
We denote by $B(z,r)$ the ball with center $z$ and radius $r$. If $z=0$ we may write $B(r)$ instead of $B(0,r)$, and if $z=0$ and $r=1$ we may write $\bf D$ instead of $B(0,1)$. We denote by $d(x,S)$ the Euclidean distance between a point $x$ and a set $S$, and by $d(S,S')$ the distance between two sets $S$ and $S'$. For $S\subset\bf C$ we let $|S|$ denote the diameter of $S$. We denote by $T_S$ the hitting time of $S$ by a Brownian motion. We denote by $\px$ the law of Brownian motion started at $x$, and by $\pxy$ the law of Brownian motion started at $x$, conditioned to exit a domain $D$ at $y \in \partial D$, and killed upon exiting (it will be clear from the context what the domain $D$ is); the expectations $\E_x$ and $\E_x^y$ are defined similarly.

For $X_n,Y_n\in\R_+$, $n\in\N$, we write $X_n\preceq Y_n$ (resp.\ $X_n\succeq Y_n$) if there is a constant $C$ such that $X_n\leq C Y_n$ (resp.\ $X_n\geq CY_n$) for all $n\in\N$. We write $X_n\asymp Y_n$ if $X_n\preceq Y_n$ and $X_n\succeq Y_n$. We will always state explicitly if the constant $C$ depends on other constants; in other words, unless otherwise stated the constant $C$ will be assumed to be universal. An implicit constant in the proof of a result will be assumed to satisfy the same dependencies as the implicit constant in the statement of the result.

We end this section by explaining how to deduce Theorem \ref{thm2} from Proposition \ref{prop1}.
\begin{proof}[Proof of Theorem \ref{thm2} from Proposition \ref{prop1}]
	Let $\wt A(i)$ for $i\in\bf N$ be the bounded components of ${\bf C}-W([0,\tau])$, and let $\wt R(i)$ be the out-radius of $\wt A(i)$. Throughout the proof all implicit constants may depend on $\theta$. Since for any $i\in\N$ and $z\in\wt A(i)$ we have $\frac 12 R_z\leq \wt R(i)\leq R_z$,
	\[
	\sum_{i=1}^\infty \wt R(i)^2 (|\log \wt R(i)|+1)^\theta = \sum_{i=1}^\infty\frac{\wt R(i)^2(|\log \wt R(i)|+1)^\theta}{\leb{\wt A(i)}}\cdot \leb{\wt A(i)}
	\preceq \int_{\bf C} \frac{R_z^2(|\log R_z|+1)^\theta}{\leb{A_z}}\,dz.
	\]
	This estimate and Proposition \ref{prop1} imply
	\[
	\E\left[\sum_{i=1}^\infty \wt R(i)^2(|\log\wt R(i)|+1)^\theta\right] \preceq \int_{\bf C} \E \left[\frac{R_z^2(|\log R_z|+1)^\theta}{\leb{A_z}}\right]\,dz <\infty.
	\]
	By scale invariance of $W$ and independence of $W$ and $\tau$, 
	\begin{equation*}
	\begin{split}
	\E\left[ \sum_{i=1}^\infty \wt R(i)^2 (|\log\wt R(i)|+1)^\theta \right]
	&\geq \E\left[\one{1\leq \tau\leq 2} 
	\sum_{i=1}^\infty (\tau^{1/2} R(i))^2 (|\log( \tau^{1/2} R(i))|+1)^\theta\right] \\
	&\succeq \E\left[ \sum_{i=1}^\infty  R(i)^2 (|\log R(i)|+1)^\theta\right].
	\end{split}
	\end{equation*}
	We obtain the theorem by combining the latter two equations.
\end{proof}

\section{Estimates on the out-radius of Brownian components}
\label{sec:prop2}
In this section we will prove Proposition \ref{prop1}. Throughout the section $W$ is a planar Brownian motion, and $\tau$ is an independent unit rate exponential random variable. Unless otherwise stated we assume $W(0)=0$. 

The main input to the proof of the proposition is the following result. See Figure \ref{fig:prop3} for an illustration.
\begin{Proposition}
Let $\eta>0$, let $W$ be a planar Brownian motion started from $\partial B(\eta)$ or $\partial B(\eta/2)$, and let $\mcl G$ be the set of connected components of $\{\eta/2<|z|<\eta \}-W([0,\tau])$. Let $\mcl G^*$ be the set of $G\in\mcl G$ for which there is a path $\nu_G:[0,1]\to G$ satisfying $|\nu_G(0)|=\eta/2$ and $|\nu_G(1)|=\eta$. Then 
\begin{equation}
\E \left[\sum_{G \in \CG^*} \frac{\eta^2}{\leb{G}}\right] \preceq \frac{1}{|\log(\eta\wedge 1/2)|}.
\label{eq8}
\end{equation}
\label{prop3} 
\end{Proposition}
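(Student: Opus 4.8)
The plan is to work at scale $\eta$ but reduce immediately to the unit scale by Brownian scaling: writing $W'(t) = \eta^{-1} W(\eta^2 t)$, a planar Brownian motion started from $\partial B(1)$ or $\partial B(1/2)$, and noting that $\tau/\eta^2$ is an exponential of rate $\eta^2$ independent of $W'$, the quantity $\sum_{G\in\CG^*}\eta^2/\leb{G}$ is distributed exactly as $\sum_{G'\in\CG'^*}1/\leb{G'}$ for the annulus $\{1/2<|z|<1\}$, a Brownian motion run for an \emph{independent} exponential time of rate $\eta^2$. So the task becomes: show that the expected sum of $1/\leb{G'}$ over crossing components $G'$ of the unit annulus, cut by a Brownian path run for time $\mathrm{Exp}(\eta^2)$, is $\preceq 1/|\log(\eta\wedge 1/2)|$. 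Since the crossing components can only exist while the path is still inside $B(1)$, and the number of returns to the annulus before time $T$ grows only logarithmically in $T$, the $1/|\log\eta|$ will come from the rarity of long excursions: roughly, $W'$ must make order $|\log\eta|$ successive crossings of the annulus before time $\mathrm{Exp}(\eta^2)$ to leave even one crossing component, and each such crossing costs a constant factor.

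The key steps, in order. (i) Do the scaling reduction above; fix the annulus $\mcl A=\{1/2<|z|<1\}$ and the exponential clock of rate $\lambda=\eta^2\wedge 1$ (the $\wedge 1$ handling $\eta\ge 1$, where the bound is trivially a constant). (ii) Decompose the Brownian path into its successive excursions across $\mcl A$: let $N$ be the number of disjoint crossings of $\mcl A$ (from $\partial B(1/2)$ to $\partial B(1)$ or back) completed before the exponential time $\tau$. A crossing component $G\in\CG^*$ must be bordered on both sides by crossing strands of the path, so if $N\le 1$ then $\CG^*=\emptyset$; more usefully, control $\E[\,\#\CG^*\cdot \sup_{G}(1/\leb G)\,]$ by conditioning on $N$. (iii) For fixed $N=k$, the $k$ crossing excursions are (conditionally on their endpoints) independent Brownian crossings of the annulus. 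Use this independence together with an anticoncentration/overlap estimate — the same idea invoked in the introduction for Proposition \ref{prop1} — to show that $\E[1/\leb G \mid N=k]$ is bounded by a constant uniformly in $k$: two independent Brownian crossings are very unlikely to trace each other closely enough to make the enclosed area tiny, so $\E[1/\leb G]$ stays $O(1)$ while $\#\CG^* \le k$. (iv) Finally bound $\E[\,\min(N, \text{something})\cdot \mathbbm 1\{N\ge 2\}\cdot C\,]$ by $C\,\bP[N\ge 2]$ times the (bounded) conditional expectation, or more carefully $\sum_{k\ge 2} k\,\bP[N=k]\cdot O(1)$. The probability that the Brownian motion completes a single crossing of $\mcl A$ before an independent $\mathrm{Exp}(\lambda)$ time, given it is near $\partial B(1/2)$, is $1-\Theta(1/\log(1/\lambda))$ — because a crossing takes $O(1)$ time but the competing exponential has mean $1/\lambda$, wait, rather: the relevant fact is that starting from $\partial B(1/2)$, the probability of reaching $\partial B(1)$ before being killed by an $\mathrm{Exp}(\lambda)$ clock is bounded away from $1$ by $\asymp 1/|\log\lambda|$ — so $N$ is stochastically dominated by a geometric variable with that failure parameter, giving $\bP[N\ge 2]\asymp 1/|\log\lambda| = 1/|\log(\eta^2\wedge 1)| \asymp 1/|\log(\eta\wedge 1/2)|$, and $\sum_k k\,\bP[N=k] \asymp 1/|\log\lambda|$ as well. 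Combining (iii) and (iv) gives \eqref{eq8}.

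The main obstacle I expect is step (iii): making precise and uniform-in-$k$ the claim that $\E[\eta^2/\leb G]$ over crossing components stays bounded, i.e. that the path does not typically pinch off thin slivers. One must handle the possibility that many of the $k$ crossings cluster so that consecutive strands are close; the right way is probably to bound $\leb G$ from below by the area between two \emph{specific} consecutive crossing strands bounding $G$, then use that conditionally on their endpoints these two strands are independent and each has a transverse fluctuation of order $1$ across the annulus, so the enclosed area has a density bounded near $0$ — uniformly in the endpoint data and in $k$. A clean way to get the $1/\leb G$ moment is to show $\bP[\leb G < s]\preceq s$ for all $s>0$ (again uniformly), which integrates to a bounded expectation of $1/\leb G$ only after truncation; one actually needs $\bP[\leb G<s]\preceq s|\log s|^{-1-\delta}$ or a comparison with the total area, but since $\sum_G \leb G \le \mcl L(\mcl A) \le \pi$ is bounded, summing $1/\leb G$ over all of $\CG^*$ is the delicate point and is exactly where one must use that there are only $O(N)$ such components and invoke the overlap estimate per adjacent pair. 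The logarithmic gain itself (step iv) is routine once the crossing decomposition is set up, using standard harmonic-measure estimates in the annulus for the Brownian motion competing against the exponential clock.
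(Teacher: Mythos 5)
Your plan inverts the source of the logarithmic gain, and both of your central estimates (iii) and (iv) are false. For (iv): after your rescaling the clock has rate $\eta^2$ and hence mean $\eta^{-2}\to\infty$, so the Brownian motion typically makes a \emph{large} number of crossings of the unit annulus. The correct harmonic-measure estimate, which is the second assertion of the paper's Lemma~\ref{prop6}, is that $\bP[N=k]\preceq 1/|\log\eta|$ for each \emph{fixed} $k$: the distribution of $N$ is spread over roughly $|\log\eta|$ values. Consequently $\bP[N\ge 2]=1-\bP[N\le 1]=1-O(1/|\log\eta|)$ is close to $1$, not $\asymp 1/|\log\eta|$, and $\E[N]\succeq|\log\eta|$. (Even your own geometric domination with failure parameter $q\asymp 1/|\log\eta|$ gives $\bP[N\ge2]=(1-q)^2\approx1$ and $\E[N]\asymp 1/q=|\log\eta|$.) For (iii): with $k$ crossings the annulus is cut into at most $k+1$ pieces of total area $\preceq\eta^2$, so a typical piece has $\eta^2/\leb{G}$ of order $k$; the conditional expectation grows polynomially in $k$, not boundedly, and the paper's Lemma~\ref{poly_bound} quantifies this as $\preceq n^3$. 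Even with (iii) repaired to a polynomial bound, your sum $\sum_{k}k\,\bP[N=k]\cdot\text{poly}(k)$ diverges, because $\bP[N=k]\preceq 1/|\log\eta|$ provides no decay in $k$.

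The ingredient you are missing, and the engine of the paper's proof, is that $\bP[\CG^*\ne\emptyset\mid N=k]$ decays \emph{exponentially} in $k$. In the paper's slit-annulus decomposition each crossing has a uniformly positive probability of intersecting both the positive and negative real axes, and one such crossing disconnects the annulus and kills every candidate $G\in\CG^*$. Combining the polynomial inverse-area bound with this exponential decay yields the exponential estimate $\preceq\mu^n$ of Lemma~\ref{prop:UV} (and its clock-corrected version $(c_\eta\mu)^k$), and only then does the harmonic-measure estimate deliver the logarithmic gain: $\sum_k\bP[N=k](c_\eta\mu)^k\preceq(1/|\log\eta|)\sum_k(c_\eta\mu)^k\preceq 1/|\log\eta|$. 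Two further obstacles you take for granted: the crossings are not conditionally independent Brownian bridges given $N$, because the exponential clock couples their durations, which the paper corrects with the Radon--Nikodym computation of Lemmas~\ref{prop2} and~\ref{prop6}; and the crossing decomposition requires placing $W(\tau)$ outside the annulus being crossed, which is why the paper introduces the two auxiliary annuli $\wt D,\wt D'$ and splits $\CG^*$ into eight sub-families by quadrant and half-plane.
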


\begin{figure}[ht]
\begin{center}
\includegraphics[scale=1]{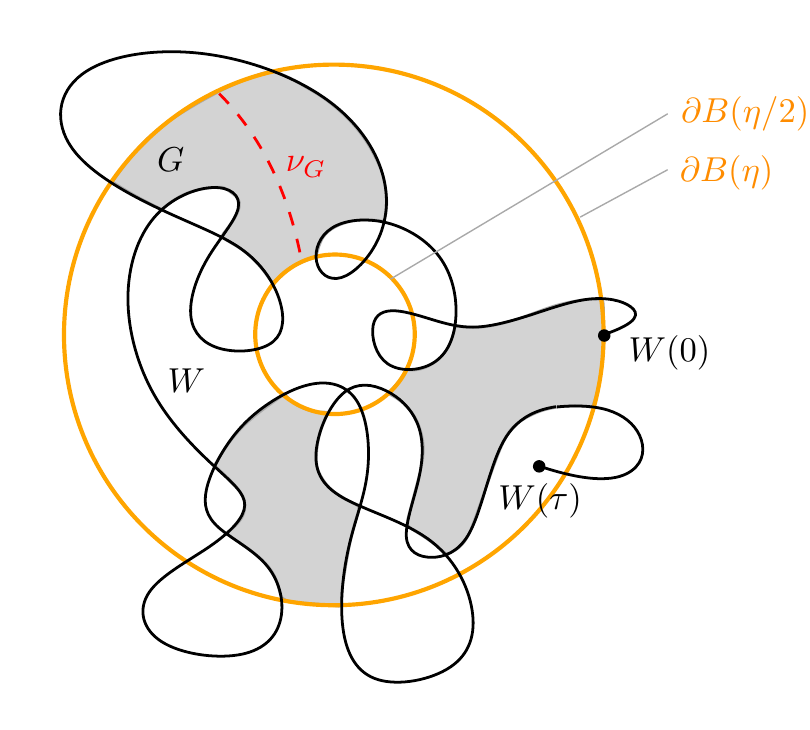}
\end{center}
\caption{Illustration of the statement of Proposition \ref{prop3}.
}
\label{fig:prop3}
\end{figure}

When proving Proposition \ref{prop3} we consider Brownian motions in $\lambda$-domains, which are defined as follows. See the left part of Figure \ref{fig-lambdadomain} for an illustration.

\begin{Definition}\label{def:lambdadomain}
A simply connected domain $D$ is called a $\la$-domain if there exist $0 < \eta_1 < \eta_2$ such that $\partial D$ is the union of the following sets:  
\begin{itemize}
\item a connected curve $\la$ which starts at $\partial B(\eta_1)$, ends at $\partial B(\eta_2)$, and is contained inside $\{z\,:\, 0 < \arg(z) < \pi,\,\eta_1\leq |z|\leq \eta_2 \} $, 
\item the segment $ \sigma = \{ -ri : \eta_1 \le r \le \eta_2 \}$, and
\item the two arcs on the circles $\partial B(\eta_1)$
and $\partial B(\eta_2)$ that connect $\la$ to $\sigma$, and which intersect the positive real line.
\end{itemize}
\end{Definition}

We will show in Lemma \ref{poly_bound} that for a collection of Brownian motions in a $\lambda$-domain the connected components of the complement of the Brownian motions that touch $\lambda$, have an expected total inverse area growing at most polynomially in the number of Brownian motions. This result will be deduced from Corollary \ref{cor_tail_est}, which follows from Proposition~\ref{exp_tail_for_la_domain} in the appendix. 

Before stating the corollary we need to introduce some notation. A {\bf polar rectangle} $q \equiv q(r, \theta, dr, d\theta)$
is a set of the form
$$ q = \{ r'\exp(i\theta') \, | \, r \le r' < r + dr, \, \theta \le \theta' 
< \theta + d\theta \}, $$
where $r, dr > 0$ and $\theta, d\theta \in [0, 2\pi)$. We call
$dr \equiv dr(q)$ the radial size, 
and $d\theta \equiv d\theta(q)$ the angular size. The boundary of
the rectangle is the union of two line segments and two circle arcs;
we refer to these as {\bf line sides} and {\bf arc sides}, respectively. 

Let $D$ be a $\la$-domain $D$ with parameters $\eta_1, \eta_2$, let $n \ge 1$, and let $\rho_1,\rho_2 \in (\eta_1, \eta_2)$ satisfy $\rho_1<\rho_2$. We will now define a collection ${\cal Q} (D, \rho_1, \rho_2, n)$ of polar rectangles. Let $dr = (\rho_2 - \rho_1) / n$ and $d\theta = \pi/(4n)$. For $j = 1, \ldots, n$, let
$$ \theta(j) = \inf \{ \arg(z) : z \in \la, |z| \in [\rho_1 + (j-1) \cdot dr, \rho_1 + j \cdot dr ] \} $$
and let
$$ Q_j = q( \rho_1 + (j-1) \cdot dr, \theta(j) - d\theta, dr, d\theta). $$
That is, we can obtain $Q_j$ as follows: start with a rectangle of size $dr \times d\theta$ that
lies just under the positive $x$-axis and touches the segment 
$[\rho_1 + (j-1) \cdot dr, \rho_1 + j \cdot dr ] $, and rotate it counterclockwise until it touches $\la$. We denote this collection of polar rectangles by ${\cal Q} (D, \rho_1, \rho_2, n)$ 
\begin{equation*}
{\cal Q} (D, \rho_1, \rho_2, n) = \{ Q_j\,:\, 1 \le j \le n\}.
\end{equation*}

\begin{figure}[ht]
\begin{center}
\includegraphics[scale=0.95]{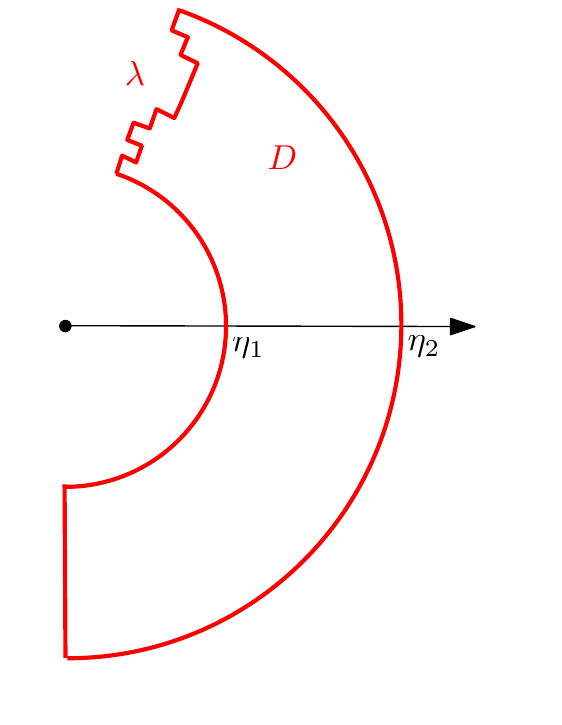}
\includegraphics[scale=0.95]{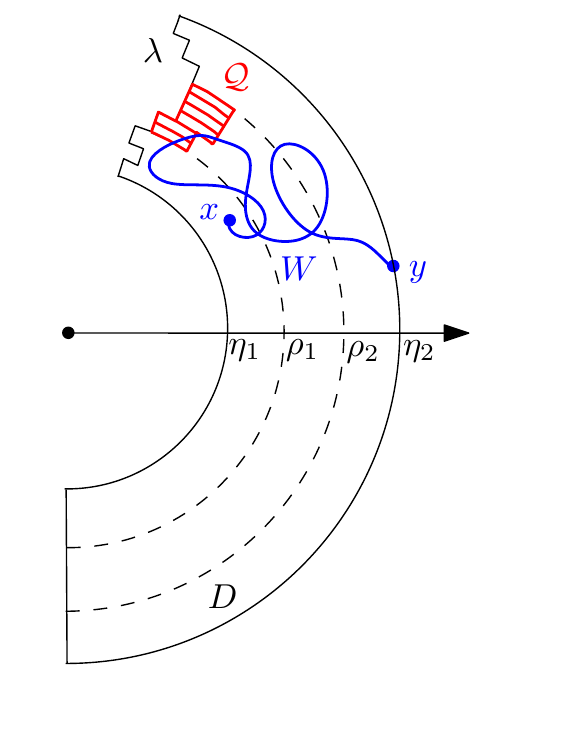}
\includegraphics[scale=0.95]{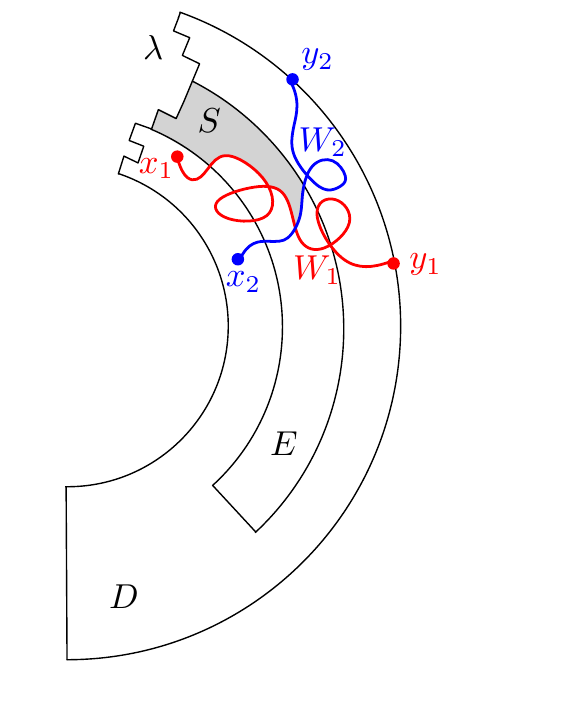}
\end{center}
\caption{
Left: The domain $D$ is a $\lambda$-domain with parameters $\eta_1$ and $\eta_2$, as defined in Definition \ref{def:lambdadomain}. 
Middle: By Corollary~\ref{cor_tail_est}, which follows from Proposition~\ref{exp_tail_for_la_domain}, the probability that the Brownian motion $W$ from $x$ to $y$ hits a large number of elements in $\mcl Q$ before exiting $D$ decays exponentially, uniformly in the number of elements of $\mcl Q$. Right: Illustration of Lemma \ref{poly_bound} for $n=2$. The lemma says that the expected inverse area of the gray region grows at most polynomially in the number of Brownian motions $W_i$. }
\label{fig-lambdadomain}
\end{figure}

The following corollary is immediate by Proposition~\ref{exp_tail_for_la_domain}. Observe that the collection ${\cal Q}(D,\rho_1, \rho_2, n)$ of polar rectangles may not satisfy the assumptions of the proposition, but the odd and even subcollections $\{ Q_{2j} \}$ and $\{ Q_{2j+1} \}$ each satisfy the assumptions, and we may apply the proposition with these collections instead. See the middle part of Figure \ref{fig-lambdadomain} for an illustration of the statement of the corollary.

\begin{Corollary}\label{cor_tail_est}
Let $D$ be a $\la$-domain with parameters $\eta_1, \eta_2$.
Let $\rho_1, \rho_2 \in (\eta_1, \eta_2)$ satisfy $\rho_1<\rho_2$, and let $\delta > 0$ and $n \ge 1$. Let $x \in D$ be such that either $\arg(x) \le -\pi / 4$, or $x$ can be connected to the positive real line by a path contained in $\{z\in D\,:\, |z|\not\in (\rho_1 - \delta ,\rho_2 + \delta) \}$. Let $y \in \partial D - \la$. Let $W$ be a Brownian motion started at $x$, conditioned to exit $D$ at $y$ and
killed upon exiting. Let $N$ be the number of rectangles in 
${\cal Q} (D, \rho_1, \rho_2, n)$ hit by $W$. Then
\begin{equation*}
\pxy(N \ge t) \preceq \mu^t \quad \forall t \ge 1,
\end{equation*}
where the implicit constant and $\mu<1$ are constants depending on $\eta_1, \eta_2, \rho_1, \rho_2$ and $\delta$.
\end{Corollary}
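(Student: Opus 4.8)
The plan is to deduce the corollary from Proposition~\ref{exp_tail_for_la_domain} by applying that proposition separately to the odd- and even-indexed rectangles of $\mathcal{Q}(D,\rho_1,\rho_2,n)$ and then combining the two tail bounds. Write $\mathcal{Q}(D,\rho_1,\rho_2,n)=\{Q_1,\dots,Q_n\}$ and set $\mathcal{Q}^{\mathrm{odd}}=\{Q_j:j\text{ odd}\}$ and $\mathcal{Q}^{\mathrm{even}}=\{Q_j:j\text{ even}\}$. Let $N^{\mathrm{odd}}$ and $N^{\mathrm{even}}$ be the numbers of rectangles hit by $W$ in each of these subcollections, so that $N=N^{\mathrm{odd}}+N^{\mathrm{even}}$.

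The reason for the splitting is that consecutive rectangles $Q_j,Q_{j+1}$ sit at radial coordinates differing by only $dr=(\rho_2-\rho_1)/n$ and, once both have been rotated counterclockwise to touch $\lambda$, they may touch or overlap; hence $\mathcal{Q}(D,\rho_1,\rho_2,n)$ itself need not satisfy the separation hypothesis of Proposition~\ref{exp_tail_for_la_domain}. Within each subcollection, however, successive rectangles have radial coordinates differing by $2\,dr$ while each has radial size $dr$, so they are genuinely separated in the radial direction, they are still ordered monotonically along $\lambda$, and they are obtained by the same ``rotate up from just below the positive real axis until touching $\lambda$'' prescription; this is exactly the configuration the appendix proposition is designed to handle. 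One checks that this verification is uniform in $n$, and that the endpoint hypotheses transfer: $y\in\partial D-\lambda$ places $y$ on the part of $\partial D$ opposite to $\lambda$ (the segment $\sigma$ or a connecting arc), and the two alternatives imposed on $x$ — namely $\arg(x)\le-\pi/4$, or $x$ joined to the positive real line by a path avoiding $\{z:|z|\in(\rho_1-\delta,\rho_2+\delta)\}$ — are precisely the hypotheses of the proposition, with $\delta$ inherited unchanged.

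Applying Proposition~\ref{exp_tail_for_la_domain} to $\mathcal{Q}^{\mathrm{odd}}$ and to $\mathcal{Q}^{\mathrm{even}}$ then yields a constant $\mu_0<1$ and an implicit constant, both depending only on $\eta_1,\eta_2,\rho_1,\rho_2,\delta$ (and, crucially, not on $n$), with $\pxy(N^{\mathrm{odd}}\ge s)\preceq\mu_0^{\,s}$ and $\pxy(N^{\mathrm{even}}\ge s)\preceq\mu_0^{\,s}$ for all $s\ge1$. Since $\{N\ge t\}\subseteq\{N^{\mathrm{odd}}\ge t/2\}\cup\{N^{\mathrm{even}}\ge t/2\}$, a union bound gives $\pxy(N\ge t)\preceq\mu_0^{\,t/2}$, which is the asserted bound with $\mu=\mu_0^{1/2}<1$. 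The only substantive step is the hypothesis check in the preceding paragraph — in particular confirming that the separation and monotonicity conditions of Proposition~\ref{exp_tail_for_la_domain} hold for $\{Q_{2j}\}$ and $\{Q_{2j-1}\}$ uniformly in $n$, so that the resulting constants do not degenerate as $n\to\infty$; the splitting and the union bound are otherwise routine.
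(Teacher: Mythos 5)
Your proof is correct and is essentially identical to the paper's: the paper states that the corollary is immediate from Proposition~\ref{exp_tail_for_la_domain} once one observes that, although $\mathcal{Q}(D,\rho_1,\rho_2,n)$ may fail the separation requirement in Definition~\ref{def_nice_q}(iv), the odd and even subcollections $\{Q_{2j}\}$ and $\{Q_{2j+1}\}$ each do satisfy it, which is exactly the splitting you use. You simply spell out what the paper leaves implicit, namely the radial-gap check (uniform in $n$) and the union bound $\{N\ge t\}\subseteq\{N^{\mathrm{odd}}\ge t/2\}\cup\{N^{\mathrm{even}}\ge t/2\}$ giving $\mu=\mu_0^{1/2}$.
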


See the right part of Figure \ref{fig-lambdadomain} for an illustration of the statement of the following lemma.

\begin{Lemma}\label{poly_bound}
Let $D$ be a $\la$-domain with parameters $\eta_1, \eta_2$
and let $W_1, \ldots, W_n$ be independent Brownian motions, where each $W_i$ is started
at some point $x_i \in D$, conditioned to exit $D$ at some point $y_i \in \partial D - \la$,
and killed upon exiting. 
For $\rho_1, \rho_2 \in (\eta_1, \eta_2)$ satisfying $\rho_1<\rho_2$, let $E$ be the connected component of the following set which intersects the lower half-plane
$$ \{ z \in D : \rho_1 < |z| < \rho_2, \arg(z) > -\pi / 4 \}. $$

\noi Assume all points $x_i$ lie outside $E$ and can be connected to the positive real line by a path in $D-E$. Let $S$ be the union of all connected components of $E - \cup_i W_i([0,\infty))$
that touch $\la$; that is, the set of all points in $E$ that can be connected to $\la$
by a path that lies inside $E$ and do not meet the Brownian motions. Then
\begin{equation}\label{lebS}
\E \left[\leb{S }^{-1}\right] \preceq n^{3}
\end{equation}
where the implicit constant depends only on $\eta_1, \eta_2, \rho_1$ and $\rho_2$.
\end{Lemma}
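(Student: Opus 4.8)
The plan is to estimate $\E[\leb{S}^{-1}]$ by first controlling the geometry of $S$ in terms of the polar rectangles $\mcl Q = \mcl Q(D,\rho_1,\rho_2,n)$ and then applying Corollary \ref{cor_tail_est}. The key structural observation is that $S$ is a connected set touching $\la$, contained in the curved strip $\{\rho_1 < |z| < \rho_2\}\cap D$, and that it must traverse this strip from the inner radius $\rho_1$ to the outer radius $\rho_2$ (or at least span a definite radial range), since $E$ is the component meeting the lower half-plane and $S$ reaches $\la$. The rectangles $Q_j$ are placed at consecutive radial slabs $[\rho_1 + (j-1)dr,\, \rho_1 + j\,dr]$, each rotated up against $\la$; because $S$ is connected, touches $\la$, and spans the radii, for each $j$ the set $S$ must contain a point in (or near) $Q_j$, unless one of the Brownian motions $W_i$ passes through $Q_j$ and blocks it. Thus, if $M$ denotes the number of rectangles $Q_j$ hit by $\cup_i W_i$, the set $S$ contains at least $n - M$ (roughly) of the rectangles, and hence $\leb{S} \succeq (n-M)\cdot dr\cdot d\theta \cdot \rho_1 \asymp (n-M)/n^2$ — here I use that each $Q_j$ has radial size $dr = (\rho_2-\rho_1)/n$, angular size $d\theta = \pi/(4n)$, and radial position $\asymp \rho_1$, and that the $Q_j$ are disjoint (for the even or odd subcollection; one loses only a constant factor in switching from all $j$ to every other $j$). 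Consequently $\leb{S}^{-1} \preceq n^2/(n - M)$ on the event $\{M \le n/2\}$, say, and $\leb{S}^{-1} \preceq n^2/\ell$ when exactly $M = n - \ell$ rectangles are free.

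Next I would bound $\bP(M \ge t)$, i.e.\ the probability that at least $t$ of the rectangles are hit by the union $\cup_{i=1}^n W_i$. Each $W_i$ starts outside $E$ at a point $x_i$ connectable to the positive real axis by a path in $D - E$; this is exactly the hypothesis needed to invoke Corollary \ref{cor_tail_est} (after splitting $\mcl Q$ into its even and odd subcollections, each of which satisfies the hypotheses of Proposition \ref{exp_tail_for_la_domain}). So for each fixed $i$, the number $N_i$ of rectangles hit by $W_i$ satisfies $\pxy[N_i \ge t] \preceq \mu^t$ for a constant $\mu < 1$ depending on $\eta_1,\eta_2,\rho_1,\rho_2,\delta$. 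Since $M \le \sum_{i=1}^n N_i$ and the $W_i$ are independent, I can bound $\bP(M \ge t)$ by a standard large-deviations / union-type argument: either use the exponential moment $\E[e^{sN_i}] \preceq 1$ for small $s > 0$ (which holds because of the geometric tail) to get $\E[e^{sM}] = \prod_i \E[e^{sN_i}] \preceq C^n$, hence $\bP(M \ge t) \preceq C^n e^{-st}$; this is useful for $t$ of order much larger than $n$. For the regime $t$ comparable to $n$, I instead note that if $M \ge n - \ell$ then $\sum_i N_i \ge n - \ell$, so some $W_i$ has $N_i \ge (n-\ell)/n \cdot $ (a fraction), but more simply: the event $\{M \ge n-\ell\}$ with $\ell$ small is the relevant one for the singular part of $\leb{S}^{-1}$, and there I want $\bP(M = n - \ell)$ or $\bP(M \ge n - \ell)$ to be summable against $n^2/\ell$.

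Then I would assemble the bound
\[
\E[\leb{S}^{-1}] \preceq \sum_{\ell = 1}^{n} \frac{n^2}{\ell}\,\bP\big(M \in [n-\ell-1,\, n-\ell]\big) + n^2\,\bP\big(M > n - 1\big) \cdot (\text{handle }\ell \to 0)\ ,
\]
wait — I must be careful about the case $\ell$ very small, i.e.\ when almost all rectangles are hit. The worst term is $\ell = O(1)$, giving a contribution $\preceq n^2 \bP(M \ge n - O(1))$. I claim $\bP(M \ge n - O(1)) \preceq \mu'^{\,n}$ for some $\mu' < 1$: indeed, for the union of $n$ Brownian motions to hit all but $O(1)$ of the $n$ rectangles is a rare event, and the exponential-moment bound $\bP(M \ge n - c) \le e^{-s(n-c)}\E[e^{sM}] \preceq e^{-s(n-c)} C^n$; choosing $s$ large enough (possible since the tails of $N_i$ are geometric with a fixed rate, so $\E[e^{sN_i}]$ stays bounded for $s$ up to $|\log\mu|$) — actually $C$ depends on $s$, so I need $s|\log\mu|^{-1}$-type care, but the point is that as $s\uparrow$ toward a threshold, $\E[e^{sN_i}]$ can be made such that $e^{-s} \cdot \E[e^{sN_i}] < 1$; this is where I must check the constants carefully. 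Granting this, $n^2 \mu'^{\,n}$ is summable in $n$ and in particular $\preceq 1$ for each fixed $n$, so $\E[\leb{S}^{-1}] \preceq n^2\cdot(\text{number of }\ell\text{-terms})\cdot\max \preceq n^2 \cdot n = n^3$, matching \eqref{lebS}. The main obstacle I anticipate is the deterministic geometric lemma: rigorously showing that $S$ (a union of components of $E$ minus the Brownian traces that touch $\la$, where $E$ is itself a single component chosen by the lower-half-plane condition) must contain a definite-area chunk near each rectangle $Q_j$ that is \emph{not} hit — one has to argue connectivity of $S$ across the strip, control that $S$ genuinely reaches radius $\rho_1$ and $\rho_2$, and handle the rotation of $Q_j$ against $\la$ so that "near $Q_j$" can be upgraded to "containing a fixed fraction of $Q_j$" or a comparable disk. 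The probabilistic large-deviation step is routine once the geometric reduction $\leb{S}^{-1} \preceq n^2/(n-M)$ is in place.
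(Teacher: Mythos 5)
Your geometric reduction is sound as far as it goes: if a rectangle $Q_j$ touching $\la$ and contained in $E$ is not hit by any $W_i$, then $Q_j$ lies entirely in one component of $E-\cup_i W_i([0,\infty))$ that touches $\la$, so $Q_j\subset S$. But there is a genuine gap in how you convert this into an expectation bound. You fix the number of rectangles at $n$ (the number of Brownian motions), and your inequality $\leb{S}^{-1}\preceq n^2/(n-M)$ degenerates on the event $\{M=n\}$, where all $n$ rectangles are hit. On that event the $n$-rectangle decomposition gives no lower bound on $\leb{S}$ at all, so $\E[\leb{S}^{-1}\one{M=n}]$ cannot be controlled by $n^2\,\bP(M=n)$ or by any multiple of $\bP(M=n)$: the event has positive probability, and on it $\leb{S}$ can be arbitrarily small, so its conditional inverse expectation is not a priori finite. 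Your assembled sum has an uncontrolled $\ell=0$ term that you cannot patch by taking $\ell=O(1)$.

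The paper avoids this by decoupling the resolution of the rectangle grid from $n$. For every $m\ge 1$ it considers the collection $\mcl Q(D,\rho_1+\delta,\rho_2-\delta,m)$ of $m$ rectangles (with $\delta=(\rho_2-\rho_1)/3$), each of area $\asymp m^{-2}$. If fewer than $m$ are hit, one is free, so $\leb{S}^{-1}\le C_1 m^2$; if all $m$ are hit, then $\sum_i N_i\ge m$, hence some $N_i\ge m/n$, and Corollary~\ref{cor_tail_est} gives the tail bound $\bP[\leb{S}^{-1}>C_1 m^2]\preceq n\mu^{m/n}$. Letting $m$ range over all positive integers yields the full tail of $\leb{S}^{-1}$, and integrating gives $\preceq n^3$. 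That free parameter $m$ is the missing ingredient in your proposal; once it is in place, your Chernoff-type estimate for $\bP(M\ge t)$ is also unnecessary, since the elementary union bound $\bP(N=m)\le\sum_i\bP(N_i\ge m/n)$ suffices. A secondary point: you should anchor the rectangles at $\rho_1+\delta$ and $\rho_2-\delta$ rather than at $\rho_1,\rho_2$, so that the $\delta$-slack hypothesis on the starting points $x_i$ in Corollary~\ref{cor_tail_est} is genuinely satisfied by paths in $D-E$.
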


\begin{proof}
Let $\delta = (\rho_2 - \rho_1) / 3$ and let $\rho_1^* = \rho_1 + \delta$, 
$\rho_2^* = \rho_2 - \delta$. For any $m \ge 1$, consider the collection of $m$ polar rectangles
${\cal Q} (D, \rho_1^*, \rho_2^*, m) = \{ Q_j \}$ defined right above the statement of Corollary~\ref{cor_tail_est}.
Each rectangle is contained inside $E$, touches $\la$, and has area at least 
$(\rho_1 \delta \pi / 4) / m^2$.

Let $N_i$ be the number of rectangles in ${\cal Q} (D, \rho_1^*, \rho_2^*, m)$ hit by $W_i$,
and let $N$ be the number of rectangles hit by some (at least one) $W_i$. If $N < m$, then
there is at least one $Q_j$ that is not hit by any of the Brownian motions. In that case $Q_j \subset S $,
so $ \leb{S }^{-1} \le \leb{Q_j}^{-1} \le C_1 m^2$, where $C_1 =  4/( \rho_1 \delta \pi ) $. Hence
$$ \bP[ \leb{S}^{-1} > C_1 m^2 ] \le \bP[N = m] \le \sum_{j=1}^n \bP[N_i \ge m / n]. $$

\noi Corollary~\ref{cor_tail_est} guarantees that all $N_i$ have exponential tails, so 
$$ \bP[ \leb{S}^{-1} > C_1 m^2 ] \preceq n \mu^{m / n}, $$
where $\mu < 1$ and the implicit constant may depend on $\eta_1, \eta_2, \rho_1^*, \rho_2^*$ and $\delta$, and we get the bound
$$ \E \left[ \leb{S}^{-1}\right] 
=\int_{0}^{\infty}\bP[ \leb{S}^{-1}>x ]\,dx
\preceq \int_{0}^{\infty}n\mu^{(x^{1/2} C_1^{-1/2}-1)/n}\,dx
= n^3 \int_{0}^{\infty} \mu^{y^{1/2} C_1^{-1/2}-1/n} \,dy
\preceq n^3.
$$
\end{proof}

We have obtained a polynomial upper bound for the expected inverse area induced by a collection of Brownian motions. In Lemma \ref{prop:UV} we will obtain an exponential bound for a related expectation.

Given $\eta_2>\eta_1>0$ define 
$$\rho_1 = (2/3) \eta_1 + (1/3) \eta_2, \quad \rho_2 = (1/3) \eta_1 + (2/3) \eta_2,$$ 
and let $V=V(\eta_1,\eta_2)$ and $U=U(\eta_1,\eta_2)$ be the following sets satisfying $U\subset V\subset \bf C$. Let $V$ be an annulus with the negative $y$-axis removed
\begin{equation}
V =V(\eta_1,\eta_2)= \{z : \eta_1<|z|<\eta_2 \} - \{ z : \arg(z) = - \pi/2 \},
\label{eq27}
\end{equation} 
and let $U$ be a smaller set of a similar shape, except that part of the set close to the negative $y$-axis has been removed
\begin{equation}
U =U(\eta_1,\eta_2)= \{ z : \rho_1 < |z| < \rho_2\}  - \{ z : |\arg(z)+\pi/2| < \pi/4 \}.
\label{eq25}
\end{equation} 
See Figure \ref{fig-UV-lem-la} for an illustration of the sets $U$ and $V$ and of the statement of the following lemma.
\begin{Lemma}
Given $\eta_2>\eta_1>0$ let $U=U(\eta_1,\eta_2)$ and $V=V(\eta_1,\eta_2)$ be defined as above. For $n\in\N$ and $x_i\in\partial U$ and $y_i\in \partial V$ for $i\in\{1,\dots,n\}$ let $W_i$ be a Brownian motion started from $x_i$ conditioned to exit $V$ at $y_i$, and killed upon exiting $V$. For $i\in\{0,\dots,n\}$, $y_0\not\in V$ and $x_{n+1}\not\in U$ let $\wh W_i$ be a path contained in ${\bf C}-U$ which starts at $y_{i}$ and ends at $x_{i+1}$. Let $\mcl G$ be the connected components of $V-\cup_i W_i([0,\infty))-\cup_i \wh W_i([0,\infty))$, and let $\mcl G^*\subset\mcl G$ be the set of $G\in\mcl G$ for which there exists a path $\nu_G:[0,1]\to G\cap \bf H$ satisfying $|\nu_G(0)|=\eta_1$ and $|\nu_G(1)|=\eta_2$, where $\bf H$ is the upper half-plane. Then we have 
\begin{equation}\label{eqmainexpbound2}
\E \left[\sum_{G \in \CG^*} \frac{\eta_2^2}{\leb{G\cap U}}\right] \preceq \mu^n,
\end{equation}
where the sum is defined to be 0 if $\CG^*$ is empty. The implicit constant and $\mu\in(0,1)$ depend only on the ratio $\eta_2/\eta_1$. 
\label{prop:UV}
\end{Lemma}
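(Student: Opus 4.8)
The plan is to reduce the exponential bound \eqref{eqmainexpbound2} to the polynomial bound of Lemma \ref{poly_bound} by splitting on the number of Brownian motions that ``interact'' with the annular region near the positive real axis. First I would fix a $\la$-domain $D$ inside $V$: the set $\{z : \eta_1 < |z| < \eta_2,\ -\pi/4 < \arg(z) < \pi\}$ with the curve $\la$ taken to be (an approximation of) the portion of $\partial(\cup_i W_i \cup \cup_i \wh W_i)$ bounding the component in question from above, so that a component $G \in \CG^*$ with its crossing path $\nu_G$ gives rise, after intersecting with $U$, to a set of the form $E - \cup_i W_i$ appearing in Lemma \ref{poly_bound}. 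The point of the set $U$ having a full $\pi/2$-wedge removed near the negative $y$-axis, while $V$ only has the ray removed, is exactly that $G \cap U$ stays away from the ``seam'' and is genuinely annular there, so the $\la$-domain structure is available. By scale invariance (Brownian motion and the sets $U, V, \CG^*$ all scale) we may assume $\eta_2 = 1$, and all constants then depend only on $\eta_1 = \eta_1/\eta_2$.

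Next, the key dichotomy: let $M$ be the number of indices $i$ such that $W_i$ (or $\wh W_i$) comes within distance, say, $\delta = (\rho_2-\rho_1)/3$ of the region bounding $E$, i.e.\ the number of Brownian motions that actually have a chance to subdivide the relevant component. On the event $\{M \le n/2\}$ we are in the situation of Lemma \ref{poly_bound} with at most $n/2$ relevant curves, giving $\E[\leb{G\cap U}^{-1} \mathbbm 1_{\{G \in \CG^*\}} \mathbbm 1_{\{M \le n/2\}}] \preceq (n/2)^3 \preceq n^3$, uniformly. On the complementary event $\{M > n/2\}$ we instead need a \emph{probabilistic} gain: each individual $W_i$, conditioned to go from $x_i \in \partial U$ to $y_i \in \partial V$ and killed on exiting $V$, has a uniformly positive probability $1-p_0 > 0$ of \emph{not} coming near the region bounding $E$ (it can instead wind around through the part of the annulus away from the real axis, since $y_i \notin V$ forces it to exit, but not near the positive real line). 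Hence, since the $W_i$ are independent, $\bP[M > n/2] \le \binom{n}{\lceil n/2\rceil} p_0^{\lceil n/2 \rceil} \preceq (2\sqrt{p_0})^n =: \mu_0^n$ with $\mu_0 < 1$ provided $p_0$ is small enough — and if $p_0$ is not automatically small, one first thins to sub-blocks of the $W_i$ and uses Corollary \ref{cor_tail_est} on each block to make the effective per-block escape probability as small as desired, at the cost of a constant factor in the exponent. On this event we use only the crude deterministic bound: if $\CG^* \neq \emptyset$ then $\leb{G \cap U}^{-1}$ is bounded by... well, it can be large, so we actually need $\E[\leb{G\cap U}^{-1}; M > n/2]$, which we get from Cauchy--Schwarz together with the (unconditional, from Lemma \ref{poly_bound} applied with all $n$ curves) bound $\E[\leb{G \cap U}^{-2}] \preceq n^{6}$: then $\E[\leb{G\cap U}^{-1}; M > n/2] \le (\E[\leb{G\cap U}^{-2}])^{1/2}\bP[M>n/2]^{1/2} \preceq n^{3}\mu_0^{n/2}$, which is $\preceq \mu^n$ for any $\mu \in (\mu_0^{1/2},1)$ after absorbing the polynomial factor.

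Combining the two regimes, $\E[\sum_{G\in\CG^*} \eta_2^2/\leb{G\cap U}] \preceq n^3 \mathbbm 1_{\{M\le n/2\}}\text{-part} + n^3\mu_0^{n/2}\text{-part}$ — but note the first regime only gives a polynomial bound, not the claimed $\mu^n$; so the splitting threshold must be chosen more carefully. The fix is to split at $M \le \epsilon n$ for a small $\epsilon$ to be optimized: on $\{M \le \epsilon n\}$ Lemma \ref{poly_bound} with $\epsilon n$ curves... still only gives a polynomial bound. Thus the honest structure is: the first regime cannot occur with non-negligible probability either. That is, \emph{both} the ``few relevant curves'' bound and the smallness must come from probability — the right statement is $\bP[\CG^* \neq \emptyset] \preceq \mu_1^n$ itself, because for $\CG^*$ to be nonempty there must be a crossing of the annulus $\{\eta_1 < |z| < \eta_2\}\cap\bf H$ left unblocked by \emph{all} $n$ Brownian motions plus paths, and each $W_i$ independently blocks such a crossing with probability bounded below. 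I expect \textbf{this last point — showing that the probability that $\CG^*$ is nonempty decays exponentially in $n$, with rate depending only on $\eta_2/\eta_1$, uniformly over all starting/ending configurations $x_i,y_i,\wh W_i$ — to be the main obstacle}, since one must rule out adversarial choices of the $\wh W_i$ (which are arbitrary paths outside $U$, not random) conspiring with atypical Brownian behavior; the resolution is that the $\wh W_i$ live in $\bf C - U$ and so cannot themselves block a crossing of $U$, while each $W_i$ has uniformly positive chance of crossing $U\cap\bf H$ and thereby blocking. Once $\bP[\CG^*\neq\emptyset]\preceq \mu_1^n$ is in hand, Cauchy--Schwarz against the $\E[\leb{G\cap U}^{-2}]\preceq n^6$ bound from Lemma \ref{poly_bound} finishes \eqref{eqmainexpbound2}.
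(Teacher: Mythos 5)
Your final paragraph correctly identifies the crux of the argument: the exponential decay must come from $\bP[\CG^* \neq \emptyset] \preceq \mu_1^n$, since each $W_i$ independently has a uniformly positive chance (depending only on $\eta_2/\eta_1$) of hitting both the positive and negative real line inside $V$, which forces $\CG^* = \emptyset$. This is exactly the paper's source of exponential smallness. You also correctly sense that the polynomial bound of Lemma \ref{poly_bound} must then be multiplied against this exponentially small probability.

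However, there is a genuine gap in how you connect $\CG^*$ to Lemma \ref{poly_bound}. Lemma \ref{poly_bound} requires a \emph{fixed, deterministic} $\lambda$-domain, and the Brownian motions it controls must satisfy the geometric hypothesis (start outside $E$, connectable to the positive real line in $D - E$) and, crucially, be \emph{independent} of $\lambda$. Your sketch takes $\la$ to be ``(an approximation of) the portion of $\partial(\cup_i W_i \cup \cup_i \wh W_i)$ bounding the component,'' which is random and built from the very Brownian motions the lemma is supposed to control — so Lemma \ref{poly_bound} cannot be invoked directly. The paper resolves this with a specific device you do not supply: discretize $V\cap\bf H$ by a polar grid of mesh $1/m$, enumerate the lattice crossings $LP(m)$, and for each $G \in \CG^*$ take the \emph{leftmost} lattice path $\lambda_m(G)$ inside $G$. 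One then conditions on the event $\{\lambda = \lambda_m(G) \text{ for some } G\}$; the key observation is that this event constrains only the Brownian motions and paths lying to the \emph{left} of $\lambda$, so the $W_i$ strictly to the right of $\lambda$ remain independent, start outside $E$, and satisfy the connectivity hypothesis — putting one squarely in the setting of Lemma \ref{poly_bound} with a now-deterministic $\lambda$. A Fatou argument and the identity $\sum_{\lambda\in LP(m)} \bP[\lambda\in\lambda_m(\CG^*)] = \E[|\CG^*|] \le (n+1)\bP[\CG^*\neq\emptyset]$ then finish the estimate. Your Cauchy--Schwarz alternative does not avoid this: to bound $\E[(\sum_{G\in\CG^*} \leb{G\cap U}^{-1})^2]$ you would still need precisely this discretization-and-conditioning machinery (now for a second moment, with cross-terms), so the proposal has not replaced the missing step but only deferred it.
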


\begin{proof}
For any $m\in\bf N$ we can partition the annular region $V\cap \bf H$ into $m^2$ polar rectangles of size $(\eta_2-\eta_1)/m\times \pi/m$. The union of the boundaries of those rectangles forms a polar grid. Let $LP(m)$ be the set of all lattice paths on the polar grid that connect the circles $\partial B(\eta_1)$ and $\partial B(\eta_2)$, lie in $V\cap\bf H$, and do not self-intersect. Define an order relation on $LP(m)$ by saying that $\lambda>\lambda'$ if $\lambda$ is left of $\lambda'$; more precisely, we say that $\lambda>\lambda'$ if $\lambda\neq\lambda'$ and there are no parts of $\lambda$ strictly on the right side of $\lambda'$. With this order relation, for each $G\in\CG^*$ define $\lambda_m(G):=\max\{\lambda\in LP(m)\,:\,\lambda\subset G \}$; if the set $\{\lambda\in LP(m)\,:\,\lambda\subset G \}$ is empty then $\lambda_m(G)$ is not defined, but observe that a.s.\ for any $G\in\mcl G^*$ the path $\lambda_m(G)$ is well-defined for all sufficiently large $m$. The curve $\lambda_m(G)$ can be viewed as an approximation to the left boundary of $G$ from the right, with the additional constraint that it should be contained in $\bf H$.

For any $\lambda\in LP(m)$ let $G_\lambda$ be the union of all components of $U-\cup_i W_i([0,\infty))-\cup_i \wh W_i([0,\infty))$ which are on the right side of $\lambda$ and whose closures have a non-empty intersection with $\lambda$. In other words, $z\in G_\lambda$ if $z$ can be connected to $\lambda$ by a path in $U$ that does not intersect any of the $W_i$ or $\wh W_i$, and which is on the right side of $\lambda$. See Figure \ref{fig-UV-lem-la}. Observe that if $\lambda=\lambda_m(G)$ for some $G\in\mcl G^*$ then we have $G_\lambda\subset G\cap U$. (However, the inverse inclusion does not necessarily hold.)

\begin{figure}[ht]
\begin{center}
\includegraphics[scale=1]{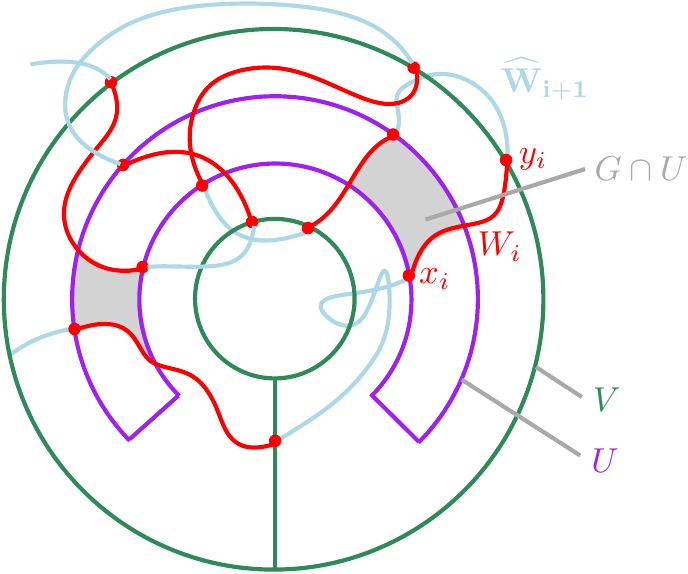}\qquad
\includegraphics[scale=1]{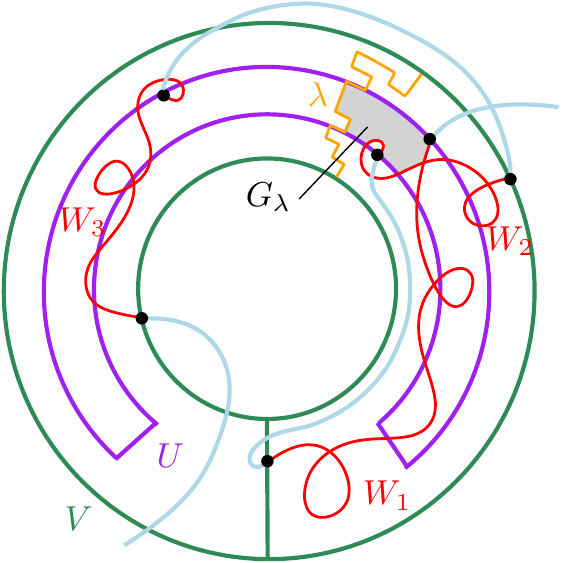}
\end{center}
\caption{Left: Illustration of Lemmas \ref{prop:UV} and \ref{prop2}. Right: Illustration of $G_\lambda$ in the proof of Lemma \ref{prop:UV}.}
\label{fig-UV-lem-la}
\end{figure}

Note next that $\CG^*$ is finite; in fact $|\CG^*| \le n+1$. Between two curves $\nu_G$ and $\nu_{G'}$ for $G,G'\in\mcl G^*$, there must be at least one Brownian motion $W_i$, since none of the $W_i$ cross these curves. Hence $|\CG^*| \le n+1$.

Since $\CG^*$ is finite, for all $m$ sufficiently large the left boundary approximation $ \la_m(G) $ is well-defined for all $G \in \CG^*$. Let $\lambda_m(\mcl G^*)=\{ \lambda_m(G)\,:\, G\in\mcl G^* \}$. By our observation $G_\lambda\subset G\cap U$, 
$$ \sum_{G \in \CG^*} \frac{\eta^2_2}{\leb{G \cap U}}  \leq 
\liminf_{m \rightarrow \infty} \sum_{\la \in \lambda_m(\mcl G^*)} \frac{\eta^2_2}{\leb{G_\lambda}}.$$

Fatou's lemma now gives
\begin{eqnarray*}
\E\left[ \sum_{G \in \CG^*} \frac{\eta^2_2}{\leb{G \cap U}}\right] 
& \le  
& \liminf_{m \rightarrow \infty} \sum_{\la \in LP(m)} \E \left[\frac{\eta^2_2}{\leb{G_\lambda}} {\bf 1} (\la \in \lambda_m(\mcl G^*))\right]  \\
& = & \liminf_{m \rightarrow \infty} 
\sum_{\la \in LP(m)} \E\left[ \frac{\eta^2_2}{\leb{G_\lambda}} \,|\, \la \in \lambda_m(\mcl G^*)\right] \cdot \bP[\la \in \lambda_m(\mcl G^*)].
\end{eqnarray*}

By our definition of $V$, on the event that $\la \in \lambda_m(\mcl G^*)$ each of the paths $W_i$ are either to the left of $\la$ or to the right of $\la$; an upcrossing $W_i$ cannot be both to the left and to the right of $\la$, since this would imply that it crosses either $\la$ or the negative imaginary axis. The event $ \{ \la \in \lambda_m(\mcl G^*) \} $ amounts to two things. First, none of the paths $W_i$ or $\wh W_i$ hit $\la$. Second, every rectangle in the polar lattice of size $m$ that lies just left of $\la$ and is inside the upper half-plane is hit by at least one of the paths $W_i$ or $\wh W_i$. The latter condition introduces a rather complicated dependency among the Brownian motions $W_i$ that start to the left of $\lambda$ and the paths $\wh W_i$, but has no effect on the Brownian motions $W_i$ started to the right of $\lambda$. These remain independent even after conditioning on $ \{ \la \in \lambda_m( \mcl G^*) \} $. Observe that if $\lambda\in\lambda_m(\mcl G^*)$ and $x_i$ is on the right side of $\lambda$, then $x_i$ may be connected to the positive real line by a path staying inside $V-U$, since the path $\wh W_{i-1}$ connects $x_i$ to $V^c$ by a path in $V-U$. By Lemma \ref{poly_bound},
\begin{equation*}
\E \left[\frac{\eta^2_2}{\leb{G_\lambda}} \, | \la \in \lambda_m(\mcl G^*) \right] \preceq n^3.
\end{equation*}
Therefore, using $|\mcl G^*|\leq n+1$,
\begin{equation}
\E \left[ \sum_{G \in \CG^*} \frac{\eta^2_2}{\leb{G \cap U}} \right]  \preceq n^3 \liminf_{m \rightarrow \infty} \sum_{\la \in LP(m)} {\bP}\left[\la \in \lambda_m(\mcl G^*)\right] 
= n^3 {\E}[ |\mcl G^*|] 
 \preceq n^4 {\bP}[\mcl G^* \ne \emptyset].
 \label{eq29}
\end{equation}
For each $i$ there is a uniformly positive probability (depending on the ratio $\eta_2/\eta_1$) that $W_i$ intersects both the negative real line and the positive real line. Since this event implies that $\CG^*=\emptyset$, we see that 
$\bP[\CG^* \ne \emptyset]$ decays exponentially with $n$, so the right side of \eqref{eq29} is $\preceq \mu^n$, where $\mu\in(0,1)$ depends only on the ratio $\eta_2/\eta_1$.
\end{proof}

The following is an almost immediate corollary of Lemma \ref{prop:UV}, and will imply the upper bound of $\preceq 1$ in Proposition \ref{prop3}.
\begin{Lemma}
Let $\eta_1,\eta_2,U,V$ be as in Lemma \ref{prop:UV}. For $i\in\bf N$ let $x_i\in\partial U$, $y_i\in\partial V$ and
 $W_i$ be as in Lemma \ref{prop:UV}. For each $n\in\bf N$ and 
 $i\in\{0,\dots,n\}$ let $\wh W_i^n$ satisfy the same properties as $\wh W_i$ in Lemma \ref{prop:UV} (but where the terminal point of $\wh W_n^n$ is not required to be equal to $x_{n+1}$). For each $n\in\bf N$ let $\mcl G_n^*$ be defined as the set $\mcl G^*$ in Lemma \ref{prop:UV}, i.e., it is the set of components $G$ of 
 $V-\cup_{1\leq i\leq n} W_i([0,\infty))-\cup_{0\leq i\leq n} \wh W^n_i([0,\infty))$ for which there exists a path 
 $\nu_G:[0,1]\to G\cap \bf H$ satisfying $|\nu_G(0)|=\eta_1$ and $|\nu_G(1)|=\eta_2$. Then
\begin{equation*}
\E \left[\sum_{n=1}^\infty\sum_{G \in \CG^*_n} \frac{\eta_2^2}{\leb{G\cap U}}\right] \preceq 1,
\end{equation*}
with the implicit constant depending only on the ratio $\eta_2/\eta_1$.
\label{prop7}
\end{Lemma}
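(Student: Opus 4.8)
The plan is to deduce Lemma \ref{prop7} from Lemma \ref{prop:UV} by summing the bound $\preceq \mu^n$ over $n$, with the main work being to check that for each fixed $n$ the configuration appearing in Lemma \ref{prop7} satisfies the hypotheses of Lemma \ref{prop:UV}, so that \eqref{eqmainexpbound2} applies with the \emph{same} implicit constant and the \emph{same} $\mu$ (depending only on $\eta_2/\eta_1$) uniformly in $n$. First I would fix $n\in\bf N$ and observe that the only difference between the setup of Lemma \ref{prop7} and that of Lemma \ref{prop:UV} is that in Lemma \ref{prop7} the paths $\wh W_0^n,\dots,\wh W_n^n$ are allowed to depend on $n$ and the last one $\wh W_n^n$ is not tied to a point $x_{n+1}$. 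Neither of these changes matters for the proof of Lemma \ref{prop:UV}: that proof only uses that $W_1,\dots,W_n$ are independent Brownian motions in $V$ conditioned to exit at points of $\partial V-\la$, that each $\wh W_i$ is a path in $\bf C-U$ from $y_i$ to $x_{i+1}$ (and for $i=n$, only that it is a path in $\bf C-U$ starting at $y_n$), and that the $x_i$ can be joined to the positive real axis through $V-U$ via the preceding $\wh W_{i-1}$. So Lemma \ref{prop:UV} gives, for every $n$,
\begin{equation*}
\E\left[\sum_{G\in\CG_n^*}\frac{\eta_2^2}{\leb{G\cap U}}\right]\preceq \mu^n,
\end{equation*}
with implicit constant and $\mu\in(0,1)$ depending only on $\eta_2/\eta_1$.

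Next I would sum over $n$. By the monotone convergence theorem (all terms are nonnegative),
\begin{equation*}
\E\left[\sum_{n=1}^\infty\sum_{G\in\CG_n^*}\frac{\eta_2^2}{\leb{G\cap U}}\right]
=\sum_{n=1}^\infty\E\left[\sum_{G\in\CG_n^*}\frac{\eta_2^2}{\leb{G\cap U}}\right]
\preceq\sum_{n=1}^\infty\mu^n=\frac{\mu}{1-\mu}\preceq 1,
\end{equation*}
where the final bound depends only on $\eta_2/\eta_1$ since $\mu$ does. This is the entire argument; there is essentially no obstacle beyond the bookkeeping of verifying the hypotheses transfer.

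The one point that deserves care — and which I expect to be the only genuine subtlety — is that in Lemma \ref{prop:UV} the points $x_i\in\partial U$ and $y_i\in\partial V$ and the Brownian motions $W_i$ are arbitrary (the conclusion is uniform over all such choices), so applying it for each $n$ to the \emph{same} sequence $(x_i,y_i,W_i)_{i\geq 1}$ but with the $n$-dependent connecting paths $\wh W_i^n$ is legitimate: for each $n$ we simply take the first $n$ Brownian motions, the points $x_1,\dots,x_{n+1}$ (only $x_1,\dots,x_n$ are used) and $y_0,\dots,y_n$, and the paths $\wh W_0^n,\dots,\wh W_n^n$, and this is a valid instance of the Lemma \ref{prop:UV} configuration. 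I would also remark explicitly that $\CG_n^*$ is measurable and the inner sum is well-defined (finite, with at most $n+1$ terms, exactly as in the proof of Lemma \ref{prop:UV}), so the double sum and expectation may be interchanged by monotone convergence. With these remarks in place the proof is complete.
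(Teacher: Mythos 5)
Your proof is correct and follows exactly the same route as the paper: apply Lemma \ref{prop:UV} for each fixed $n$ (noting that the modified requirements on $\wh W_n^n$ and the $n$-dependence of the connecting paths do not affect the hypotheses), then sum the geometric series by monotone convergence. The paper's proof is terser — it states the $\preceq\mu^n$ bound and says the lemma follows by summation — but your more explicit verification that the configuration for each $n$ is a valid instance of Lemma \ref{prop:UV} is exactly the bookkeeping being suppressed there.
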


\begin{proof}
By Lemma \ref{prop:UV}, for all $n\in\bf N$,
\begin{equation*}
\E\left[\sum_{G \in \CG^*_n} \frac{\eta^2_2}{\leb{G\cap U}}\right] \preceq \mu^n,
\end{equation*}
and the lemma follows by summation over $n\in\N$.
\end{proof}

In our proof of Proposition \ref{prop3} we will need a variant of Lemma \ref{prop:UV} where the law of the Brownian motions $W_i$ are slightly different. The following lemma gives us the Radon-Nikodym of the process of interest with respect to the one considered in Lemma \ref{prop:UV}. See Figure \ref{fig-UV-lem-la}.

\begin{Lemma}
Let $U\subset V\subset\bf C$ be open sets such that $\op{dist}(U,V^c)>0$. Let $W$ be a Brownian motion satisfying $W(0)=y_0\not\in U$, and define for $i\in\N$
\begin{equation}
\begin{split}
&T_0=0,\qquad 
T_{2i-1}= \inf\{t\geq T_{2i-2}\,:\,W(t)\in U \},\qquad
T_{2i} = \inf\{t\geq T_{2i-1}\,:\,W(t)\in V^c \}\\
&x_i = W(T_{2i-1}),\qquad
y_i = W(T_{2i}),\qquad t_i = T_{i}-T_{i-1},\\
&W_i(t) = W\big((t\wedge t_{2i})+T_{2i-1}\big),\qquad
\wh W_i(t) = W\big((t\wedge t_{2i+1})+T_{2i}\big),\qquad
\forall t\geq 0.
\end{split}
\label{eq28}
\end{equation}
Let $\mcl F$ be the $\sigma$-algebra generated by $\wh W_i$ and  $\one{\tau\leq T_i}$ for $i\in\N$. Let $\ol W_i$ be a Brownian motion started from $x_i\in\partial U$, conditioned to exit $V$ at $y_i\in\partial V$, and killed upon exiting $V$. Let $\ol t_{2i}$ be the duration of $\ol W_i$, and let $f_{x_i,y_i}$ be the probability density function of $\ol t_{2i}$. Conditioned on $\mcl F$, and on the event that $\tau\in(T_{2n},T_{2n+1})$ for some $n\in{\bf N}$, the upcrossings $W_i$ are independent for $i\in\{1,\dots,n\}$, and each upcrossing $W_i$ for $i\in\{1,\dots,n\}$ has a law which is absolutely continuous with respect to the law of $\ol W_i$, such that the Radon-Nikodym derivative of the law of $W_i$ with respect to the law of $\ol W_i$ is given by 
\begin{equation}
\frac{\exp(-t_{2i})}{\int_{0}^{\infty} f_{x_i,y_i}(s)\exp(-s)\,ds}.
\label{eq:rn}
\end{equation}
\label{prop2}
\end{Lemma}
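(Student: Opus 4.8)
The plan is to decompose the path of $W$ at the stopping times $T_i$ and identify the conditional law of each upcrossing $W_i$ under the conditioning described. The first step is a strong Markov / path-decomposition observation: since the $T_i$ are stopping times, the excursions $W_1, \wh W_1, W_2, \wh W_2, \dots$ are, \emph{before any conditioning on $\tau$}, conditionally independent given the sequence of endpoints $(x_i, y_i)$, with $W_i$ distributed as a Brownian motion from $x_i$ to the first hit of $V^c$ (which occurs at $y_i$), i.e.\ the conditioned-to-exit-$V$-at-$y_i$ law $\ol W_i$ \emph{except} that nothing yet forces the law of its duration $t_{2i}$ to equal that of $\ol t_{2i}$. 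The point is that $\ol W_i$ is $\ol W_i$ conditioned additionally on its exit point, and the law of $W_i$ before the $\tau$-conditioning is exactly $\ol W_i$; the durations $t_{2i}$ are then the reference durations with density $f_{x_i,y_i}$.

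The second and main step is to introduce the conditioning on $\mcl F$ and on the event $\{\tau \in (T_{2n}, T_{2n+1})\}$ and to compute its effect via Bayes' rule. Since $\tau$ is an independent unit-rate exponential, the event $\{\tau \in (T_{2n}, T_{2n+1})\}$ has conditional probability, given all the excursions, equal to $\exp(-T_{2n}) - \exp(-T_{2n+1})$. The key algebraic fact is that $\mcl F$ already contains the $\wh W_i$ (hence the odd-indexed durations $t_{2i+1}$ and, together with $\one{\tau\leq T_i}$, pins down which $n$ is relevant), so conditioning the collection $(W_1,\dots,W_n)$ on $\mcl F \vee \{\tau\in(T_{2n},T_{2n+1})\}$ tilts their joint reference law (a product of $\ol W_i$-laws) by a factor proportional to $\exp(-T_{2n}) - \exp(-T_{2n+1})$. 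Writing $T_{2n} = \sum_{i} (t_{2i-1} + t_{2i})$ and $T_{2n+1} = T_{2n} + t_{2n+1}$, and noting that on $\mcl F$ the quantities $t_{2i-1}$ for $1\le i\le n$ and $t_{2n+1}$ are all $\mcl F$-measurable, the tilting factor factorizes as a constant (absorbing the $\mcl F$-measurable pieces) times $\prod_{i=1}^n \exp(-t_{2i})$. Dividing by the normalizing constant, which by independence factorizes as $\prod_{i=1}^n \int_0^\infty f_{x_i,y_i}(s)\exp(-s)\,ds$, yields both the asserted conditional independence of the $W_i$ and the product Radon--Nikodym derivative \eqref{eq:rn}, with each factor $\exp(-t_{2i}) / \int_0^\infty f_{x_i,y_i}(s)\exp(-s)\,ds$ depending on $W_i$ only through its duration.

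I expect the main obstacle to be bookkeeping rather than anything deep: one must be careful that conditioning on $\mcl F$ does not inadvertently constrain the $W_i$ beyond what is claimed. Specifically, one needs that the endpoints $x_i = W(T_{2i-1})$ and $y_i = W(T_{2i})$ are determined by $\mcl F$ (the $x_i$ are the starting points of $\wh W_{i-1}$... rather, the $y_i$ are the starting points of $\wh W_i$ and the $x_i$ are the terminal points of $\wh W_{i-1}$, both $\mcl F$-measurable), so that conditioning on $\mcl F$ fixes the endpoints but leaves the interior of each upcrossing free, governed by the $\ol W_i$ reference law. The other delicate point is the measure-theoretic justification that the conditional law of the excursions given the endpoints is genuinely the product of $\ol W_i$-laws; this is a standard consequence of the strong Markov property applied successively at $T_1, T_2, \dots$, together with the fact that $\{\tau \le T_i\}$ and $\wh W_i$ carry no information about the excursion interiors of the $W_j$. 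Once these points are in place, the Bayes computation is a short calculation and the lemma follows.
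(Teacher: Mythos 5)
Your proof is correct and follows essentially the same route as the paper. Both start from the strong Markov decomposition (given the endpoints $x_i,y_i$ the upcrossings are conditionally independent with reference law $\ol W_i$), both observe that the additional conditioning acts only through the durations $t_{2i}$ because $\tau$ is an independent exponential, and both identify the tilting factor $\exp(-T_{2n})-\exp(-T_{2n+1}) = (1-\exp(-t_{2n+1}))\prod_{j\le 2n}\exp(-t_j)$ and use the fact that the odd durations are $\mcl F$-measurable so that the factor reduces (up to $\mcl F$-measurable constants) to $\prod_{i=1}^n \exp(-t_{2i})$. The paper's write-up carries out the Bayes computation explicitly via finite-dimensional distributions (conditioning $t_{2i}\in A_{2i}$ on all the other $t_j \in A_j$), whereas you argue more abstractly by factorizing the Radon--Nikodym density; these are the same calculation packaged differently, and your framing is if anything slightly more streamlined.
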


\begin{proof}
Conditioned only on the points $x_i,y_i$, the upcrossings $W_i$ are independent, and $W_i$ has the same law as $\ol W_i$ for each $i\in\bf N$. On the event we consider in the lemma we have $\tau\in(T_{2n},T_{2n+1})$, which affects the law of the duration of the upcrossings, so to conclude the proof of the lemma we need to describe the joint law of $t_{2i}$ for $i\in\{1,\dots,n\}$ and $n\in\bf N$, conditioned on $\mcl F$ and on the event $\tau\in(T_{2n},T_{2n+1})$. For $y\in U^c$ and $x\in\partial U$ let $\wt f_{y,x}$ denote the probability density function for the duration of a Brownian motion started from $y$, conditioned to exit $U^c$ at $x$, and killed upon exiting $U^c$. Let $i\in\N$ and $n\geq i$, and for $j\in\{1,\dots,2n+1 \}$ let $A_j\subset\R_+$ be a set of positive measure. By Bayes' law, the conditional law of $t_{2i}$ given that $\tau\in(T_{2n},T_{2n+1})$ and that $t_j\in A_j$ for all $j\in\{1,\dots,2n+1\}-\{2i \}$, is characterized by
\begin{equation*}
\begin{split}
&\bP[ t_{2i}\in A_{2i}\,|\,\tau\in(T_{2n},T_{2n+1});\,t_j\in A_j\text{\,\,for\,\,}j\in\{1,\dots,2n+1\}-\{2i\}]\\
&\qquad= \frac{\bP[\tau\in(T_{2n},T_{2n+1});\,t_j\in A_j\text{\,\,for\,\,}j\in \{1,\dots,2n+1\} ]}
{\bP[\tau\in(T_{2n},T_{2n+1});\,t_j\in A_j\text{\,\,for\,\,}j\in\{1,\dots,2n+1\}-\{2i\}]} \\
&\qquad=\frac{\int_{A_{1}}\dots \int_{A_{2n+1}} f(s_1,\dots,s_{2n+1}) \cdot g\left( \sum_{j=1}^{2n} s_j, \sum_{j=1}^{2n+1} s_j\right)\,ds_1\dots ds_{2n+1}} 
{\qquad \int_{\wt A_{1}}\dots \int_{\wt A_{2n+1}} f(s_1,\dots,s_{2n+1}) \cdot g\left( \sum_{j=1}^{2n} s_j, \sum_{j=1}^{2n+1} s_j\right)\,ds_1\dots ds_{2n+1}}
\\
&\qquad= \frac{\int_{A_{2i}} f_{x_i,y_i}(s) \exp(-s)\,ds}{\int_0^\infty f_{x_i,y_i}(s) \exp(-s)\,ds},
\end{split}
\end{equation*}
where 
\begin{equation*}
\begin{split}
&f(s_1,\dots,s_{2n+1}) := \wt f_{y_0,x_1}(s_1) \cdot f_{x_1,y_1}(s_2) \cdot \dots \cdot \wt f_{y_{n},x_{n+1}}(s_{2n+1}),\quad s_i\geq 0,\\
&g(\wt s_1,\wt s_2) := \bP[ \wt s_1\leq \tau< \wt s_2 ] = \exp(-\wt s_1)(1-\exp(\wt s_2-\wt s_1)),\quad 0\leq \wt s_1\leq \wt s_2,
\end{split}
\end{equation*}
$\wt A_j=A_j$ for $j\neq 2i$, and $\wt A_{2i}=\R_+$.
We observe that $t_{2i}$ is independent of the values of $t_j$ for $j\in\{1,\dots,2n+1\}-\{2i \}$, and that the Radon-Nikodym derivative of the law of $t_{2i}$ with respect to the law of $\ol t_{2i}$ is given by \eqref{eq:rn}. This implies the lemma.
\end{proof}

\begin{Definition}
In the setting of Lemma \ref{prop2}, we say that $W_i$ is the {\bf $i$th upcrossing of $W$ from $\partial U$ to $\partial V$}.
\end{Definition}

The following lemma says that if we rescale the sets $U,V$ in the above lemma such that they have a diameter of order $\eta\in(0,1)$, then the supremum of the considered Radon-Nikodym derivative converges uniformly to 1 as $\eta\rta 0$. Furthermore, the lemma gives a lower bound on the number of upcrossings before time $\tau$ for small $\eta$.

\begin{Lemma}
Let $U\subset V\subset\bf C$ be as in Lemma \ref{prop2}, and assume $V$ is bounded. Let $\eta\in(0,1/2)$, and define $U^\eta:=\{\eta z\,:\,z\in U \}$ and $V^\eta:=\{\eta z\,:\,z\in V \}$. For $x\in V^\eta$ and $y\in\partial V^\eta$ let $f^\eta_{x,y}$ be the probability density function for the duration of a Brownian motion started at $x$, conditioned to exit $V^\eta$ at $y$, and killed upon exiting $V^\eta$. Then 
\begin{equation}
\lim_{\eta\rta 0} c_\eta = 1\qquad\text{for}\qquad
c_\eta:=\sup_{x\in\partial U^\eta,y\in\partial V^\eta} \left(\int_{0}^{\infty} f_{x,y}^\eta(s)\exp(-s)\,ds \right)^{-1}.
\label{eq23}
\end{equation}
In other words, if $\mcl F$, $\ol W_i$ and the objects in \eqref{eq28} are defined as in Lemma \ref{prop2}, but with $U^\eta$ and $V^\eta$ instead of $U$ and $V$, then, conditioned on $\mcl F$ and on the event $\tau\in(T_{2n},T_{2n+1})$ for some $n\in\N$, the supremum of the Radon-Nikodym derivative of the law of $W_i$ with respect to the law of $\ol W_i$ for $i\leq n$, converges uniformly to 1 as $\eta\rta 0$. 

Furthermore, if there is a constant $C>0$ such that $d(W(0),V^\eta)\leq C\eta$ then $\bP[\tau\in(T_{2n},T_{2n+1}) ]\preceq |\log\eta|^{-1}$ for all $n\in\N\cup\{0 \}$, where the implicit constant may depend on $U$, $V$ and $C$.
\label{prop6}
\end{Lemma}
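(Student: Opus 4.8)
The plan is to prove the two assertions separately: for \eqref{eq23} I would reduce, by Brownian scaling, to a uniform bound on the expected lifetime of Brownian motion conditioned to exit the \emph{fixed} bounded domain $V$ at a boundary point; for the bound on $\bP[\tau\in(T_{2n},T_{2n+1})]$ I would combine the strong Markov property with the classical fact that planar Brownian motion started in a bounded set avoids a fixed open set up to time $s$ with probability of order $1/\log s$.

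For \eqref{eq23}: fix $x\in\partial U^\eta$, $y\in\partial V^\eta$ and let $D^\eta$ be the lifetime of a Brownian motion from $x$ conditioned to exit $V^\eta$ at $y$ and killed upon exiting, so $\int_0^\infty f^\eta_{x,y}(s)e^{-s}\,ds=\E[e^{-D^\eta}]$. By Brownian scaling $D^\eta$ has the law of $\eta^2 D$, where $D$ is the lifetime of a Brownian motion from $x/\eta\in\partial U$ conditioned to exit $V$ at $y/\eta\in\partial V$ and killed on exiting; hence $c_\eta^{-1}=\inf_{x'\in\partial U,\,y'\in\partial V}\E_{x'}^{y'}[e^{-\eta^2 D}]$, where $\E_{x'}^{y'}$ is expectation for the conditioned process in $V$ and $D$ its lifetime. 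From $1-e^{-u}\le u$ I get $c_\eta^{-1}\ge 1-\eta^2\sup_{x'\in\partial U,\,y'\in\partial V}\E_{x'}^{y'}[D]$. Conditioning Brownian motion to exit $V$ at a boundary point is a Doob $h$-transform by the positive harmonic Martin kernel at that point, so a classical theorem of Cranston and McConnell — the expected lifetime of any conditioned Brownian motion in a bounded planar domain is at most a universal constant times the area of the domain, uniformly over the conditioning and the starting point — gives $M:=\sup_{x\in V,\,y\in\partial V}\E_x^y[D]<\infty$, and hence $1\le c_\eta\le(1-\eta^2 M)^{-1}\to 1$. The reformulation in terms of the Radon--Nikodym derivative of Lemma \ref{prop2} is then immediate, since by \eqref{eq:rn} the supremum of that derivative equals $\big(\int_0^\infty f^\eta_{x_i,y_i}(s)e^{-s}\,ds\big)^{-1}\in[1,c_\eta]$. (If one prefers to avoid Cranston--McConnell, it is enough to bound $\E_{x'}^{y'}[D]$ uniformly only for $x'\in\partial U$, $y'\in\partial V$, via $\E_{x'}^{y'}[D]=P_V(x',y')^{-1}\int_V G_V(x',w)P_V(w,y')\,dw$: $P_V(x',y')$ is bounded below because $\partial U$ is a compact subset of $V$, the points $x'$ and $y'$ stay at distance $\ge\operatorname{dist}(U,V^c)>0$, and $G_V(x',\cdot)$ vanishes on $\partial V$ at the order prescribed by the local boundary geometry of $V$ — in particular quadratically at the right-angle corners where the radial cut meets the two circles — which together with the matching growth of $P_V(\cdot,y')$ at the pole makes the integrand uniformly integrable on $V$.)

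For the probability bound I would condition on the path $W$; as $\tau$ is an independent unit exponential, $\bP[\tau\in(T_{2n},T_{2n+1})]=\E\big[\mathbf 1_{\{T_{2n}<\infty\}}(e^{-T_{2n}}-e^{-T_{2n+1}})\big]$. By the strong Markov property at $T_{2n}$ (with $T_0=0$ when $n=0$), $e^{-T_{2n}}-e^{-T_{2n+1}}=e^{-T_{2n}}(1-e^{-R})$ with $R$ the hitting time of $U^\eta$ by $W(T_{2n}+\cdot)$, so $\bP[\tau\in(T_{2n},T_{2n+1})]=\E\big[\mathbf 1_{\{T_{2n}<\infty\}}e^{-T_{2n}}h(W(T_{2n}))\big]$, where $h(z):=\E_z[1-e^{-T_{U^\eta}}]\le 1$. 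On $\{T_{2n}<\infty\}$ the point $W(T_{2n})$ lies on $\partial V^\eta$, hence has modulus in $[\eta_1\eta,\eta_2\eta]$, while $|W(0)|\le(C+\eta_2)\eta$; so for every $n\ge 0$, $\bP[\tau\in(T_{2n},T_{2n+1})]\le\sup\{h(z):|z|\le R_0\eta\}$ with $R_0:=C+\eta_2$. By Brownian scaling $h(z)=\E_{z/\eta}[1-e^{-\eta^2 T_U}]$, so it remains to show $\sup_{|w|\le R_0}\E_w[1-e^{-\eta^2 T_U}]\preceq|\log\eta|^{-1}$. For any $s>0$, $1-e^{-\eta^2 T_U}\le\mathbf 1_{\{T_U>s\}}+(1-e^{-\eta^2 s})$, so $\E_w[1-e^{-\eta^2 T_U}]\le\bP_w[T_U>s]+\eta^2 s$; and since $U$ contains a fixed disk $B(z_*,\epsilon_*)$, $\bP_w[T_U>s]\le\bP_w[T_{B(z_*,\epsilon_*)}>s]\preceq 1/\log s$ uniformly over $|w|\le R_0$ — the standard escape estimate, which comes from the gambler's-ruin identity for $\log|W-z_*|$ on the annulus $\{\epsilon_*<|z-z_*|<\sqrt s\}$. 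Taking $s=1/\eta$ yields $\sup_{|w|\le R_0}\E_w[1-e^{-\eta^2 T_U}]\preceq 1/\log(1/\eta)+\eta\preceq 1/|\log\eta|$, with implicit constant depending only on $U$ and $R_0$, i.e.\ on $U$, $V$ and $C$; hence $\bP[\tau\in(T_{2n},T_{2n+1})]\preceq|\log\eta|^{-1}$ uniformly in $n$.

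I expect the main obstacle to be the uniformity over the exit point $y$ in \eqref{eq23}: $f^\eta_{x,y}$ is a conditional density, so the estimate on the conditioned lifetime must not degenerate as $y$ approaches one of the non-smooth corners of $V^\eta$ — precisely what Cranston--McConnell (or the boundary/corner analysis of $V$ indicated above) supplies. The scaling identities, the strong Markov reduction, and the logarithmic escape estimate are all routine.
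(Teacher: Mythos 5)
Your proof is correct, and it is worth noting where it matches the paper and where it diverges.

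For \eqref{eq23} you use the same key input as the paper: the Cranston--McConnell theorem, which bounds the expected lifetime of conditioned Brownian motion in a bounded planar domain by a universal constant times its area, uniformly over the starting point and the boundary conditioning point. The only difference is cosmetic — the paper converts $\E[t_{x,y}]\preceq\eta^2$ into the tail bound $\bP[t_{x,y}>\eta]\preceq\eta$ and then lower-bounds $\int f^\eta_{x,y}e^{-s}\,ds\ge\bP[t_{x,y}\le\eta]e^{-\eta}$, whereas you apply $e^{-u}\ge 1-u$ directly after a clean scaling reduction $D^\eta\overset{d}{=}\eta^2 D$; both give $c_\eta\to 1$, yours slightly faster (rate $\eta^2$ vs.\ $\eta$), which is immaterial.

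For the estimate $\bP[\tau\in(T_{2n},T_{2n+1})]\preceq|\log\eta|^{-1}$ your argument is genuinely different from the paper's. You condition on the path, apply the strong Markov property at $T_{2n}$ to reduce the claim to a uniform bound on $h(z)=\E_z[1-e^{-T_{U^\eta}}]$ for $z$ within distance $O(\eta)$ of the origin, rescale to $\E_w[1-e^{-\eta^2T_U}]$ with $U$ fixed, and then invoke the classical $1/\log s$ escape estimate (gambler's ruin on $\log|W-z_*|$ over the annulus $\{\epsilon_*<|z-z_*|<\sqrt{s}\}$) with $s=1/\eta$. The paper instead passes from the upcrossing event to $\sup_{z\in\partial V^\eta}\bP_z[\tau<\wt\tau]$ where $\wt\tau$ is the hitting time of a small disk $B(\eta r)\subset U^\eta$, expresses $\bP_z[\tau>\wt\tau]$ as a ratio of expected occupation times of $B(\eta r)$ up to the independent exponential time $\tau$, rewrites numerator and denominator as explicit Gaussian double integrals in \eqref{eq32}, and estimates the difference of the integrands pointwise to deduce \eqref{eq37}, with the denominator bounded below by $\eta^2|\log\eta|$. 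Your route is shorter and more conceptual; the paper's Green-function computation is more explicit, and its display \eqref{eq32} is recycled almost verbatim as \eqref{eq33} in the proof of Lemma \ref{prop8}, which is presumably why that form was chosen. One small caveat: you phrase the modulus bound for $W(T_{2n})$ using the annulus parameters $\eta_1,\eta_2$, but the lemma is stated for arbitrary bounded $V$; one should instead write $R_0=C+\sup_{z\in V}|z|$, which is the number you actually use, so the argument is unaffected.
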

\begin{proof}
Let $t_{x,y}$ be a random variable with the law of the duration of a Brownian motion started at $x\in V^\eta$, conditioned to leave $V^\eta$ at $y\in\partial V^\eta$, and killed upon exiting $V^\eta$. \cite{cranston-mcconnell} proved that  $\E[t_{x,y}]\preceq \leb{V^\eta} \asymp \eta^2$, so $\bP[t_{x,y}>t]\preceq \eta^2/t$. The estimate \eqref{eq23} follows, since for some constant $c>0$
\begin{equation*}
\int_{0}^{\infty} f_{x,y}^\eta(t)\exp(-t)\,dt
\geq \bP[t_{x,y}\leq\eta]\cdot\exp(-\eta) \geq (1-c\eta)\exp(-\eta)\rta 1\qquad\text{as\,\,}\eta\rta 0.
\end{equation*}

To prove the second claim of the lemma, we assume without loss of generality that $0\in U$, and we let $r\asymp 1$ be such that $B(r)\subset U$ and $d(B(r),V^c)>2r$. Observe that if $\wt\tau=\inf\{t\geq 0\,:\,W(t)\in B(\eta r) \}$ then
\begin{equation*}
\bP[\tau\in(T_{2n},T_{2n+1})]
\leq \sup_{z\in\partial V^\eta} \bP_z[ \tau<\wt\tau ].
\end{equation*}
For any $z\in\partial V^\eta$,
\begin{equation}
\bP_z[ \tau>\wt\tau ] 
= \frac{\E_z\left[\int_{0}^{\infty} \one{W(t)\in B(\eta r);t\leq\tau}\,dt \right]}{\E_z\left[\int_{0}^{\infty} \one{W(t)\in B(\eta r);t\leq\tau}\,dt\,|\,\tau>\wt\tau \right]}
= \frac{ \int_{0}^{\infty} \int_{B(\eta r)} \exp(-t) t^{-1} \exp(-|z-w|^2/(2t))\,dw\,dt }
{ \int_{0}^{\infty} \int_{B(\eta r)} \exp(-t) t^{-1} \exp(-|\eta r-w|^2/(2t))\,dw\,dt },
\label{eq32}
\end{equation}
where the estimate for the denominator is obtained by considering the expected time spent in $B(\eta r)$ by a Brownian motion started at $\partial B(\eta r)$ and stopped at time $\tau$. 

Since for any $z\in\partial V^\eta$ and $w\in B(\eta r)$
\begin{equation*}
\begin{split}
\exp(-|\eta r&-w|^2/(2t)) - \exp(-|z-w|^2/(2t))\\
&=\exp(-|\eta r-w|^2/(2t)) \left( 
1- \exp(-|z-w|^2/(2t)+|\eta r-w|^2/(2t))\right)\\
&\preceq \exp(-|\eta r-w|^2/(2t)) \left( 
|z-w|^2/(2t)-|\eta r-w|^2/(2t)
\right)\preceq \exp(-|\eta r-w|^2/(2t)) \eta^2/t,
\end{split}
\end{equation*}
it follows that
\begin{equation}
\begin{split}
\bP_z[ \tau<\wt\tau ] 
= 1-\bP_z[ \tau>\wt\tau ] 
\preceq \frac{ \int_{0}^{\infty} \int_{B(\eta r)} \exp(-t) t^{-1} \exp(-|\eta r-w|^2/(2t)) \cdot \eta^2/t
\,dw\,dt }
{ \int_{0}^{\infty} \int_{B(\eta r)} \exp(-t) t^{-1} \exp(-|\eta r-w|^2/(2t))\,dw\,dt }.
\end{split}
\label{eq37}
\end{equation}
The numerator of \eqref{eq37} is
\begin{equation*}
\preceq \int_{0}^{\eta^2}dt
+ \int_{\eta^2}^{1} \eta^2\cdot \eta^2/t^2\,dt
+ \int_{1}^{\infty} \eta^2\exp(-t) \cdot \eta^2/t^2\,dt
\preceq \eta^2, 
\end{equation*}
and the denominator of \eqref{eq37} is
\begin{equation*}
\succeq \int_{\eta^2}^{1} \eta^2 t^{-1}\,dt\asymp \eta^2|\log\eta|.
\end{equation*}
We conclude that $\bP_z[ \tau<\wt\tau ] \preceq |\log\eta|^{-1}$.
\end{proof}

We are now ready to prove Proposition \ref{prop3}.
\begin{proof}[Proof of Proposition \ref{prop3}]
\noindent {\bf Step 1.} Define $D:=\{ \eta/2<|z|<\eta \}$. We will now define two disjoint annuli $\wt D=\{\wt\eta_1<|z|<\wt \eta_2 \}$ and $\wt D'=\{\wt\eta'_1<|z|<\wt \eta'_2 \}$ contained in $D$, where $\eta/2<\wt\eta'_1<\wt \eta'_2<\wt\eta_1<\wt \eta_2<\eta$. First let $\wh D$ and $\wh D'$ be two arbitrary disjoint annuli centered at 0 and contained in $D$. Letting $Q$ denote the open first quadrant let $\phi:\wh D\cap Q\to \wt V$ be a conformal map with $\wt V=V(\wh\eta,\eta)$ for some $\wh{\eta}<\eta$ (recall \eqref{eq27}), such that $\phi$ maps the restriction of $\partial (\wh D\cap Q)$ to the real (resp.\ imaginary) axis to $\partial B(\eta)$ (resp.\ $\partial B(\wh\eta)$). Appropriate $\wh\eta$, $\wt V$ and $\phi$ exist since the extremal distance between $\partial B(\eta)$ and $\partial B(\wh{\eta})$ in $\wt V$ (as defined in e.g.\ \cite[Chapter 3.7]{lawler-book}) varies continuously as we vary $\wh\eta$, and it converges to 0 (resp.\ $\infty$) when $\wh\eta\rta \eta$ (resp.\ $\wh\eta\rta 0$). Therefore we can find an appropriate $\wh\eta$ such that the extremal distance between $\partial B(\wh\eta)$ and $\partial B(\eta)$ in $\wt V$ is the same as the extremal distance between the intersection of $\partial (\wh D\cap Q)$ with the real and imaginary axis, respectively, and existence of an appropriate conformal map $\phi$ follows. Furthermore, we may assume that $|\phi'|\asymp 1$. Observe that $\phi$ interchanges the radial and angular coordinates, in the sense that for any $z\in\wh D\cap Q$ the radial (resp.\ angular) coordinate of $\phi(z)$ is determined by the angular (resp.\ radial) coordinate of $z$. Define $\wt D$, $\wt{\eta}_1$ and
 $\wt \eta_2$ such that $\phi(\wt D\cap 
 Q)={\bf H}\cap \wt V$, with $\bf H$ denoting 
 the upper half-plane. Define $\wt D',\wt{\eta}'_1,\wt{\eta}'_2$ in 
 exactly the same way, except that we consider a 
 conformal map from $\wt{\phi}:\wh D'\cap Q\to \wt V'$, where 
 $\wt V'=V(\wh\eta',\eta)$ for some $\wh{\eta}'<\eta$.

\begin{figure}[ht]
\begin{center}
\includegraphics[scale=1]{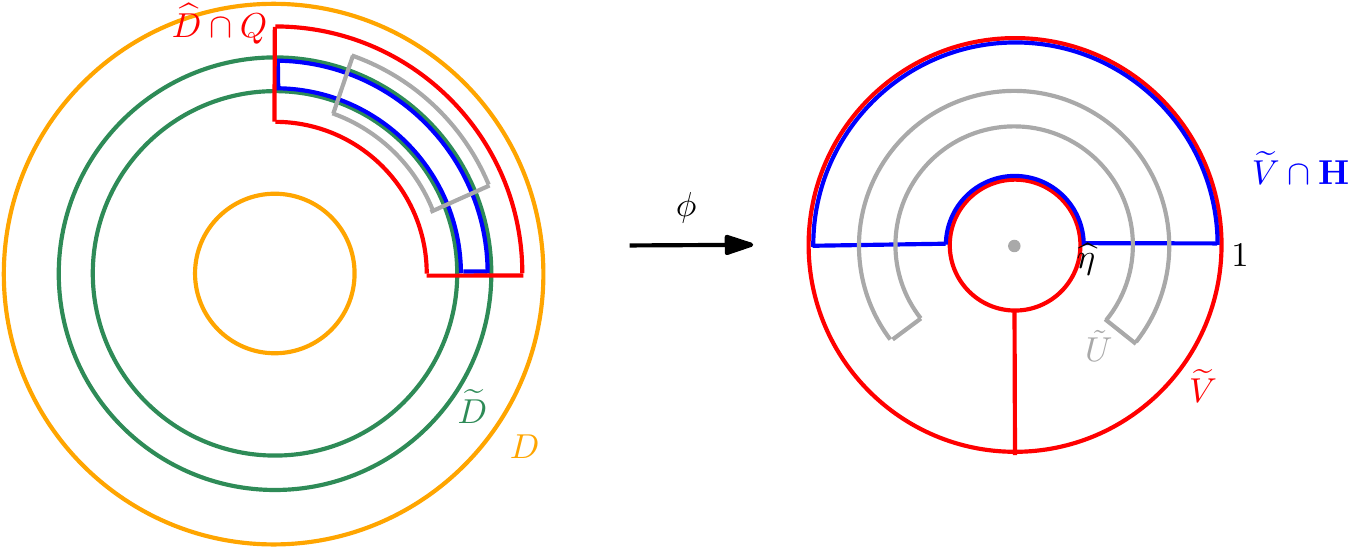}
\end{center}
\caption{Illustration of objects defined in step 1 in the proof of Proposition \ref{prop3}.
}
\label{Dtilde3}
\end{figure}

Since $\wt D$ and $\wt D'$ are disjoint at least one of the sets does not contain $x:=W(\tau)$. In the remainder of the proof we will first prove the bound \eqref{eq8} on the event that $x\not\in\wt D$, and then will use symmetry in $\wt D$ and $\wt D'$ to argue that the bound also holds on the event $x\not\in\wt D'$. It is easier to bound the sum in \eqref{eq8} on the event that $x\not\in\wt D$, since this event implies that $\tau$ does not occur during particular upcrossings of $W$ inside $\wt D$, and we can apply Lemma \ref{prop2} to describe the law of the upcrossings. 

For any path $\nu$ in the annulus $\wt D$ connecting the boundaries $\partial B(\wt\eta_1)$ and $\partial B(\wt\eta_2)$, one of the following holds: $\nu$ lies entirely within one of the four half-planes bounded the $x$- or $y$-axis,
or $\nu$ intersects both axes. By this observation we may define sets $\CG_i$ and $\CH_i$ for $1 \le i \le 4$ with union $\mcl G^*$ as follows. Let $G \in \CG_i$ if $G \in \CG^*$ and if $\nu_G$ is contained inside the half-plane $\{ (i-1) \pi / 2 \le \arg(z) \le (i+1) \pi / 2 \} $. We say $G \in \CH_i$ if $G \in \CG^*$ and $\nu_G$ connects the segments 
$ \{ r\exp((i-1) \pi / 2)\,:\, \wt\eta_1\leq r\leq \wt\eta_2 \}$ and 
$ \{ r\exp(i \pi / 2)\,:\, \wt\eta_1\leq r\leq \wt\eta_2 \}$
inside the quadrant 
$\{ (i-1) \pi / 2 \le \arg(z) \le i \pi / 2 \} $. Since
$$ \CG^* =  (\cup_{1\leq i\leq 4} \CG_i) \, \cup \, ( \cup_{1\leq i\leq 4}  \CH_i ) $$
we can treat each of the eight sets separately. By symmetry, it is enough to look at $\CG_1$ and $\CH_1$.\\

\begin{figure}[ht!]
\begin{center}
\includegraphics[scale=1]{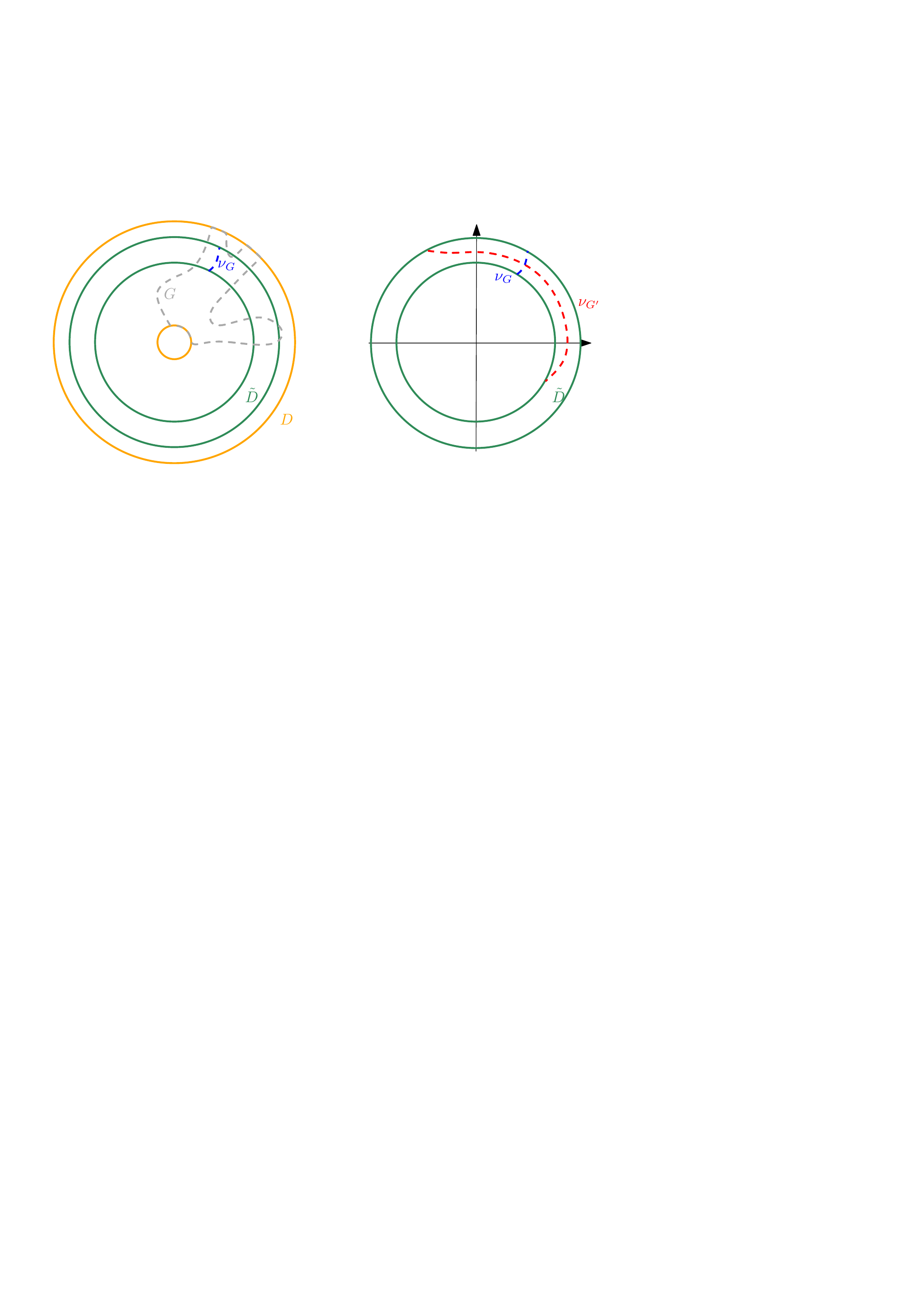}
\end{center}
\caption{Illustration of objects defined in the proof of Proposition \ref{prop3} We have $G\in\mcl G_1$ and $G'\in\mcl H_1$.
}
\label{fig-nuG2}
\end{figure}

\noindent {\bf Step 2.} Define $V=V(\wt{\eta}_1,\wt{\eta}_2)$ and $U=U(\wt{\eta}_1,\wt{\eta}_2)$ (resp.\ $V'=V(\wt{\eta}'_1,\wt{\eta}'_2)$ and $U'=U(\wt{\eta}'_1,\wt{\eta}'_2)$) by \eqref{eq27} and \eqref{eq25}, respectively. Consider the upcrossings of $W$ from $\partial(U\cup U')$ to $\partial(V\cup V')$, and condition on the initial and terminal points of the upcrossings. Furthermore, for each upcrossing we condition on whether the upcrossing was terminated before $\tau$ and whether it was started after $\tau$. Let $\wt{\E}$ be the conditional expectation. Observe that each upcrossing from $\partial(U\cup U')$ to $\partial(V\cup V')$ is either an upcrossing from $\partial U$ to $\partial V$ or an upcrossing from $\partial U'$ to $\partial V'$. Let $x_i$ and $y_i$ be the initial and terminal point, respectively, of the $i$th upcrossing $W_i$ from $\partial U$ to $\partial V$, and let $N$ be the number of such upcrossings started before time $\tau$. On the event $x\not\in \wt D$, we know that $\tau$ does not occur during the upcrossings $W_i$ for $i\leq N$. It follows by Lemma \ref{prop2} that on the event $x\not\in \wt D$, the upcrossings $W_i$ for $i\leq N$ are conditionally independent and have a Radon-Nikodym derivative of the form \eqref{eq:rn} with respect to the associated Brownian motions. By Lemma \ref{prop6} the joint conditional law of the upcrossings $W_i$ has a Radon-Nikodym derivative relative to the joint law of the associated Brownian motions which is bounded by $c_\eta^N$ for a constant $c_\eta$ satisfying $\lim_{\eta\rta 0} c_\eta=1$. By Lemma \ref{prop:UV} it follows that
\begin{equation*}
\wt{\E} \left[\sum_{G \in \CG_1} \frac{\eta^2}{\leb{G\cap U}} \one{x\not\in \wt D} \right] \preceq (c_\eta \mu)^N,
\end{equation*}
where $\mu\in(0,1)$ is a universal constant. Since $\CG_1$ consists of a single component of area at least $\mcl L(U)\asymp\eta^2$ if $N=0$ and $x\not\in \wt D$, for sufficiently small $\eta\in(0,1/2)$ such that $c_\eta\mu<1$,
\begin{equation}
\E \left[\sum_{G \in \CG_1} \frac{\eta^2}{\leb{G\cap U}} \one{x\not\in \wt D} \right] 
\preceq \sum_{k=0}^{\infty} \bP[N=k]\cdot (c_\eta\mu)^k \preceq \frac{1}{|\log\eta|},
\label{eq9}
\end{equation}
where we apply Lemma \ref{prop6} to bound $\bP[N=k]$. 

\begin{figure}[ht]
	\begin{center}
		\includegraphics[scale=1]{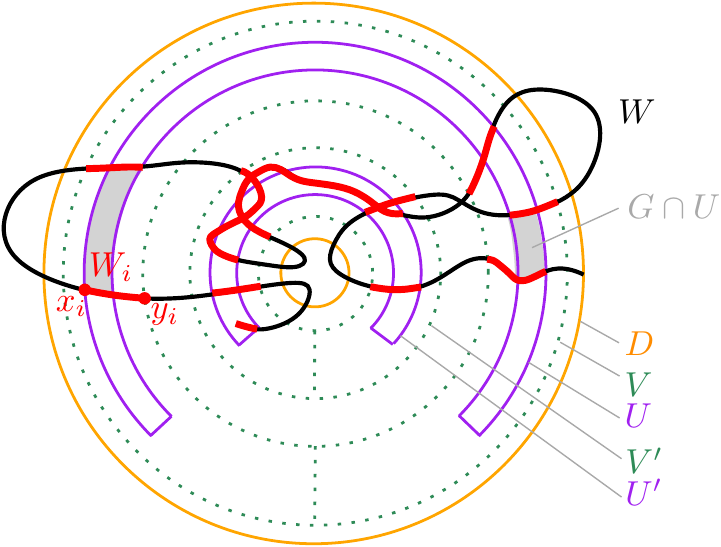}
	\end{center}
	\caption{Illustration of step 2 in the proof of Proposition \ref{prop3}. We condition on the initial and terminal points of the upcrossings shown in red. On the event that $x=W(\tau)\not\in \wt D$ (which implies $x\not\in V$) we can bound the expected inverse area of the sets $G\cap U$ for $G\in\mcl G_1$.
	}
	\label{fig-prop-ann2}
\end{figure}

Next we will apply Lemma \ref{prop7} to argue that
	\begin{equation}
	\E\left[\sum_{G \in \CG_1} \frac{\eta^2}{\leb{G\cap U}} \one{x\not\in \wt D} \right] \preceq 1.
	\label{eq36}
	\end{equation}
	Observe that
$$
\E\left[\sum_{G \in \CG_1} \frac{\eta^2}{\leb{G\cap U}} \one{x\not\in \wt D} \right] 
=\sum_{n=0}^\infty\E\left[\sum_{G \in \CG_1} \frac{\eta^2}{\leb{G\cap U}} \one{\tau\in(T_{2n},T_{2n+1})} \right],
$$
where the times $T_n$ for $n\in\N\cup\{0 \}$ are defined by \eqref{eq28}. For any fixed $n\in\N$, observe that we may define processes $\wh W_i^n$ and a set $\mcl G^*_n$ as in Lemma \ref{prop7} such that on the event $\tau\in(T_{2n},T_{2n+1})$, for any $G\in\mcl G_1$ we have $G\cap U=G^*\cap U$ for some $G^*\in\CG^*_n$. Therefore the left side of \eqref{eq36} is
$$
\le \frac{\eta^2}{\leb{U}}+ \sum_{n=1}^\infty\E\left[\sum_{G^* \in \CG_n^*} \frac{\eta^2}{\leb{G^*\cap U}} \one{\tau\in(T_{2n},T_{2n+1})} \right]
\le\frac{\eta^2}{\leb{U}}+ \sum_{n=1}^\infty\E\left[\sum_{G^* \in \CG_n^*} \frac{\eta^2}{\leb{G^*\cap U}}  \right].
$$
Lemma \ref{prop7} refers to conditioned Brownian motions, but we may average over all choices of $x_i$ and $y_i$ to get the corresponding inequality for unconditioned Brownian motions as well. Therefore the above expression is $\preceq 1$, and \eqref{eq36} follows.

By \eqref{eq9} and \eqref{eq36}, for all $\eta>0$,
\begin{equation}
\E \left[\sum_{G \in \CG_1} \frac{\eta^2}{\leb{G\cap U}} \one{x\not\in\wt D} \right] \preceq \frac{1}{|\log(\eta\wedge 1/2)|}.
\label{eq30}
\end{equation}\\

\noindent {\bf Step 3.} Next we consider $\mcl H_1$. The idea of the proof for this case is to interchange the radial and angular coordinates by the conformal map $\phi$ defined in step 1 (see Figure \ref{Dtilde3}), such that the case of $G\in\mcl H_1$ can be treated similarly as the case of $G\in\mcl G_1$. Recall the definition of $\wt V=V(\wh{\eta},\eta)$ in step 1. Then define $\wt U=U(\wh\eta,\eta)\subset\wt V$ by \eqref{eq25}.  Condition on the initial and terminal points $x_i$ and $y_i$, respectively, of the upcrossings $W_i$ from $\partial(\phi^{-1}(\wt U))$ to $\partial(\wh D\cap Q)$ (recalling that $\phi^{-1}(\wt V)=\wh D\cap Q$), on the corresponding upcrossings in $\wt D'$ instead of $\wt D$ (i.e., the upcrossings from $\partial(\phi^{-1}(\wt U'))$ to $\partial(\wh D'\cap Q)$), and on which upcrossings that are started (resp.\ completed) before time $\tau$. Let $\wt{\E}$ denote conditional expectation, and let $N$ be the total number of upcrossings $W_i$ in $\wt D$ started before time $\tau$. Since $\phi$ is conformal, the image of each $W_i$ under $\phi$ has the law of a Brownian motion started from $\phi(x_i)$, exiting $\wt V$ at $\phi(y_i)$, and killed upon exiting $\wt V$. By Lemma \ref{prop6} the joint conditional law of $W_i$ for $i\leq n$ on the event $\tau\in(T_{2n},T_{2n+1})$ has a Radon-Nikodym derivative relative to the law of the associated Brownian motions which is bounded by $c_\eta^N$, where $c_\eta\rta 1$ as $\eta\rta 0$. Since $|\phi'|\asymp 1$, we get, as in step 2 by applying Lemma \ref{prop:UV}
\begin{equation*}
\wt{\E} \left[\sum_{G \in \CH_1} \frac{\eta^2}{\leb{G\cap \phi^{-1}(\wt U)}} \one{x\not\in \wt D} \right] \preceq (c_\eta \mu)^N,
\end{equation*}
which implies by Lemma \ref{prop6} that for sufficiently small $\eta\in(0,1/2)$,
\begin{equation*}
\E \left[\sum_{G \in \CH_1} \frac{\eta^2}{\leb{G\cap \phi^{-1}(\wt U)}} \one{x\not\in \wt D} \right] 
\preceq \sum_{k=0}^{\infty} \bP[N=k]\cdot (c_\eta\mu)^k \preceq \frac{1}{|\log \eta|}.
\end{equation*}
By Lemma \ref{prop7} we get further that for all $\eta>0$
\begin{equation}
\E \left[\sum_{G \in \CH_1} \frac{\eta^2}{\leb{G\cap \phi^{-1}(\wt U)}} \one{x\not\in\wt D} \right] \preceq \frac{1}{|\log(\eta\wedge 1/2)|}.
\label{eq31}
\end{equation}

\noindent {\bf Step 4.} By \eqref{eq30}, \eqref{eq31} and similar estimates for $\mcl G_i$ and $\mcl H_i$, $i=2,3,4$,
\begin{equation*}
\E \left[\sum_{G \in \CG^*} \frac{\eta^2}{\leb{G}} \one{x\not\in\wt D} \right] \preceq \frac{1}{|\log(\eta\wedge 1/2)|}.
\end{equation*}
By a similar argument on the event $x\not\in\wt D'$, and by using that at least one of the events $x\not\in\wt D$ and $x\not\in\wt D'$ occurs, we obtain \eqref{eq8}.
\end{proof}

The next lemma will be applied to bound $R_z^2/\leb{A_z}$ when $R_z$ is smaller than $z$.
\begin{Lemma}
Let $W$ be a Brownian motion such that $W(0)=0$. For $z\in \bf C$ and $k\in\N$ define $\wt\tau_{k,z}:=\inf\{t\geq 0\,:\,|W(t)-z|\leq 2^{-k}z \}$. Then for any $\theta<1$
\begin{equation*}
\sum_{k=1}^{\infty} \int_{\bf C} \bP[\wt\tau_{k,z}<\tau]
\cdot |\log( |2^{-k}z| \wedge 1/2 )|^{-1}  
\cdot (1+|\log |2^{-k}z||)^{\theta}
\,dz<\infty.
\end{equation*}
\label{prop8}
\end{Lemma}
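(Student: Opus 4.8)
The plan is to split the integral over $\mathbf{C}$ into dyadic annuli according to $|z|$ and handle the sum over $k$ by estimating the probability $\bP[\wt\tau_{k,z}<\tau]$ that the Brownian motion comes within distance $2^{-k}|z|$ of the point $z$. First I would recall the standard estimate for planar Brownian motion: for a target point $w$ at distance $d=|w|$ from the origin and a small radius $\rho<d$, the probability that $W$ hits $B(w,\rho)$ before time $\tau$ (a unit exponential) is comparable to $\log(\text{something}/\rho)^{-1}$ up to constants depending on $d$; more precisely, using the Green's function / capacity of a disk and the fact that $\tau$ is exponential, one gets $\bP[\wt\tau_{k,z}<\tau]\preceq \frac{\log(C/|z|)}{\log(C/(2^{-k}|z|))} $ for $|z|$ bounded away from $0$, and an absolute bound $\preceq |\log(2^{-k}|z|)|^{-1}$ when $2^{-k}|z|$ is small and $|z|\asymp 1$; one also has the crude bound $\bP[\wt\tau_{k,z}<\tau]\leq 1$ always and a Gaussian-type decay $\bP[\wt\tau_{k,z}<\tau]\preceq \exp(-c|z|^2)$ when $|z|$ is large (the Brownian motion is unlikely to travel distance $|z|$ within an exponential time). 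These three regimes — $|z|$ large, $|z|\asymp 1$, $|z|$ small — are what I would treat separately.

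The large-$|z|$ regime is easy: there the factor $\bP[\wt\tau_{k,z}<\tau]\preceq e^{-c|z|^2}$ decays so fast in $|z|$ that, even though the logarithmic factors $|\log|2^{-k}z||^{-1}(1+|\log|2^{-k}z||)^\theta \asymp (1+k+\log|z|)^{\theta-1}$ are only summable in $k$ when $\theta<1$, the combined sum $\sum_k (1+k)^{\theta-1}$ converges (since $\theta-1<-0$... wait, $\theta<1$ gives $\theta-1<0$ but $\sum (1+k)^{\theta-1}$ diverges unless $\theta-1<-1$, i.e.\ $\theta<0$) — so the $k$-sum alone does not converge, and I genuinely need the decay in $k$ of $\bP[\wt\tau_{k,z}<\tau]$, namely $\bP[\wt\tau_{k,z}<\tau]\preceq (k+\log_+(1/|z|)+1)^{-1}$ or similar, to beat $(1+k)^{\theta-1}$ and produce a summable $\sum_k (1+k)^{\theta-2}$. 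For $|z|$ small, $\bP[\wt\tau_{k,z}<\tau]\preceq 1$ is too weak on its own, so I use the hitting estimate $\bP[\wt\tau_{k,z}<\tau]\preceq \frac{|\log|z||}{|\log(2^{-k}z)|}\asymp \frac{|\log|z||}{k+|\log|z||}$ for $2^{-k}|z|<1/2$; combined with the two log factors $\asymp (k+|\log|z||)^{\theta-1}$ this gives a summand $\asymp |\log|z||\,(k+|\log|z||)^{\theta-2}$, whose $k$-sum is $\asymp |\log|z||\cdot|\log|z||^{\theta-1} = |\log|z||^{\theta}$, and then $\int_{|z|<1/2}|\log|z||^\theta\,dz<\infty$ since the integrand blows up only logarithmically at the origin. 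For $|z|\asymp 1$ (say $1/2\le|z|\le$ some large constant) the region has finite area, $\bP[\wt\tau_{k,z}<\tau]\preceq |\log(2^{-k}|z|)|^{-1}\asymp (k+1)^{-1}$ for large $k$, and the summand is $\preceq (k+1)^{\theta-2}$, which is summable; the finitely many small $k$ for which $2^{-k}|z|\ge 1/2$ contribute a bounded amount. Assembling the three regimes gives the claimed finiteness.

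Let me describe the steps in order. \textbf{Step 1:} Prove the key hitting-probability bound. For fixed $z$ with $|z|\ge 2^{-k}$ and $\rho:=2^{-k}|z|$, write $\bP[\wt\tau_{k,z}<\tau] = \bP[T_{B(z,\rho)}<\tau]$ and bound this using the Green's function of $\mathbf{C}$ killed at the independent exponential $\tau$ — equivalently, $\E\big[\int_0^\infty \mathbbm 1(W(t)\in B(z,\rho))e^{-t}\,dt\big]$ — via a last-exit (or first-entrance) decomposition as in the proof of Lemma \ref{prop6} (equation \eqref{eq32}), yielding $\bP[T_{B(z,\rho)}<\tau]\asymp \frac{G(0,z)}{G_{\mathrm{loc}}}$ where the numerator involves $\int_0^\infty t^{-1}e^{-t}e^{-|z|^2/(2t)}\,dt \asymp \log_+(1/|z|) \wedge 1$ for $|z|$ bounded (and $\preceq e^{-c|z|^2}$ for $|z|$ large), and the denominator, the expected time spent in $B(z,\rho)$ starting from $\partial B(z,\rho)$, is $\asymp \rho^2|\log\rho|\asymp |2^{-k}z|^2 |\log|2^{-k}z||$ divided by the area $\asymp \rho^2$, giving $\asymp |\log|2^{-k}z||$. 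So $\bP[\wt\tau_{k,z}<\tau]\preceq \frac{(\log_+(1/|z|)+1)\wedge e^{-c|z|^2} \cdot(\text{const})}{|\log|2^{-k}z||}$, with the understanding that the numerator is $\preceq 1$ always. \textbf{Step 2:} Substitute into the sum, using $|\log|2^{-k}z||\asymp k + |\log|z||$ uniformly (valid once $2^{-k}|z|\le 1/2$; the opposite case contributes $O(1)$ per $z$ over a set of $z$ of appropriate measure and is handled crudely). \textbf{Step 3:} Perform the $k$-sum: $\sum_k (k+a)^{\theta-2}\asymp a^{\theta-1}$ for $a\ge 1$, with $a = 1+|\log|z||$ in the bounded region and $a=1+|\log|z||$ again (but with the extra $e^{-c|z|^2}$ factor) for large $|z|$. \textbf{Step 4:} Perform the $z$-integral over the three regions $|z|<1/2$, $1/2\le|z|\le 2$, $|z|>2$, using $\int_{|z|<1/2}(1+|\log|z||)^{\theta-1}\,dz<\infty$ (only logarithmic singularity, and actually $(1+|\log|z||)^{\theta}$ times $1/|\log|z||$ is at worst $|\log|z||^{\theta-1}$ which is integrable near $0$), $\int_{1/2\le|z|\le 2}(\cdots)\,dz<\infty$ trivially by compactness, and $\int_{|z|>2} e^{-c|z|^2}(1+\log|z|)^{\theta-1}\,dz<\infty$ by Gaussian decay.

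The main obstacle is \textbf{Step 1}: getting the hitting-probability estimate $\bP[\wt\tau_{k,z}<\tau]\preceq (\text{numerator depending on }|z|)/|\log|2^{-k}z||$ with the \emph{correct} dependence on both $|z|$ and $2^{-k}|z|$, and in particular the decay $\succeq 1/|\log|2^{-k}z||$ rather than something weaker — without this decay in $k$ the sum over $k$ would diverge for $\theta\in[0,1)$. This requires a careful two-sided (or at least one-sided upper) control of the Green's-function-type ratio; the cleanest route is exactly the first-entrance decomposition already used in \eqref{eq32} of Lemma \ref{prop6}, applied now with the small ball $B(z,2^{-k}|z|)$ in place of $B(\eta r)$, which is why I would organize the argument to parallel that computation. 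A secondary (but routine) subtlety is that for $|z|$ very close to $0$ the "numerator" saturates: $\log_+(1/|z|)$ can be as large as $|\log|z||$, and one must check $|\log|z||\cdot|\log|z||^{\theta-1}=|\log|z||^\theta$ is still integrable near the origin in two dimensions, which it is since $\int_0^{1/2} |\log r|^\theta\, r\,dr<\infty$ for every $\theta\in\R$.
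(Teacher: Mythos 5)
Your proposal takes essentially the same route as the paper: you set up the Green's-function (first-entrance) ratio as in equation \eqref{eq32} from the proof of Lemma \ref{prop6}, now with the small disk $B(z,2^{-k}|z|)$ in place of $B(\eta r)$, obtain a bound of the form $\bP[\wt\tau_{k,z}<\tau]\preceq(\text{numerator depending on }|z|)/(1+|\log(2^{-k}|z|)|)$, split the $z$-integral into small, moderate and large $|z|$, use $\sum_{k}(k+a)^{\theta-2}\asymp a^{\theta-1}$, and integrate; this is exactly the paper's proof.

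One factual slip worth correcting: with $\tau$ an independent unit exponential, the far-field decay of the numerator $\int_0^\infty t^{-1}e^{-t}e^{-|z|^2/(2t)}\,dt$ is exponential, $\asymp|z|^{-1}e^{-\sqrt 2|z|}$, not Gaussian $e^{-c|z|^2}$ (compare equation \eqref{eq15}); the saddle point of the integrand sits at $t\asymp|z|$, reflecting that an exponential time allows the Brownian motion to reach distance $|z|$ at the mild cost $e^{-c|z|}$ rather than $e^{-c|z|^2}$. The exponential decay is still more than enough for the large-$|z|$ region, so your conclusion is unaffected, but the intermediate claim is wrong as stated. You should also make explicit what you call the ``opposite case'': for $|z|>1$ the roughly $\log_2|z|$ values of $k$ with $2^{-k}|z|\geq 1$ have a small-ball radius $\eta\geq 1$, the denominator of the first-entrance ratio is then only $\succeq 1$ rather than $\succeq\eta^2|\log\eta|$, and the $1/|\log(2^{-k}z)|$-type gain disappears; the paper handles these finitely many $k$ with the crude bound $\bP[\wt\tau_{k,z}<\tau]\preceq e^{-c|z|}|z|^2$ multiplied by the $\preceq\log|z|$ count of such $k$, which your sketch should spell out.
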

\begin{proof}
Define $\eta=2^{-k}|z|<|z|/2$. Then (see \eqref{eq32} for a similar estimate)
\begin{equation}
\bP[\wt\tau_{k,z}<\tau] 
= \frac{ \int_{0}^{\infty} \int_{B(\eta)} \exp(-t) t^{-1} \exp(-|z-w|^2/(2t))\,dw\,dt }
{ \int_{0}^{\infty} \int_{B(\eta)} \exp(-t) t^{-1} \exp(-|\eta-w|^2/(2t))\,dw\,dt }. 
\label{eq33}
\end{equation}
For $|z|<1$ the numerator on the right side of \eqref{eq33} is
\begin{equation*}
\preceq \eta^2\int_{0}^{|z|^{2}} t^{-1}\exp(-||z|-\eta|^2/2t) \,dt + \eta^2\int_{|z|^{2}}^{1}t^{-1}\,dt + \eta^2\int_{1}^{\infty}\exp(-t)\,dt \preceq \eta^2(1+|\log |z||).
\end{equation*}
For $\eta<1$ the denominator on the right side of \eqref{eq33} is
\begin{equation}
\succeq \eta^2+\int_{\eta^2}^{1} \eta^2 t^{-1}\,dt\asymp \eta^2(1+|\log\eta|),
\label{eq34}
\end{equation}
so for $|z|,\eta<1$ we have $\bP[\wt\tau_{k,z}<\tau]\preceq (|\log |z||+1)/(|\log\eta|+1)$. 

Letting $f(t,w)$ denote the integrand in the numerator of \eqref{eq33}, we see that for $|z|\geq 1$ and any fixed $\ep>0$ the numerator of \eqref{eq33} is 
\begin{equation*}
\begin{split}
\preceq ||z|-\eta|^{1+\ep}&\eta^2\cdot\sup_{t\geq 0, w\in B(\eta)} f(t,w)+\eta^2 \int_{||z|-\eta|^{1+\ep}}^{\infty}\exp(-t)\,dt\\
&\preceq ||z|-\eta|^{1+\ep}\eta^2\cdot||z|-\eta|^{-1+\ep} \exp(-2^{1/2}||z|-\eta|)
+ \eta^2 \exp(-||z|-\eta|^{1+\ep})\\
&\preceq \eta^2\exp(-2^{1/2-\ep}||z|-\eta|),
\end{split}
\end{equation*} 
where the implicit constant depends on $\ep$. Choosing $\ep=1/10$ it follows that for $|z|\geq 1$ and $\eta<1$ we have $\bP[\wt\tau_{k,z}<\tau]\preceq \exp(-2^{1/2-1/10}|z|) (1+|\log\eta|)^{-1}$. For $\eta\geq 1$ the denominator of \eqref{eq33} is $\succeq 1$, so for $|z|,\eta\geq 1$ we have $\bP[\wt\tau_{k,z}<\tau]\preceq \exp(-2^{1/2-1/10}||z|-\eta|)\eta^2\preceq \exp(-2^{1/2-1/10-1}|z|)|z|^2$.

The lemma follows by the above bounds for $\bP[\wt\tau_{k,z}<\tau]$,
\begin{equation*}
\begin{split}
\sum_{k=1}^{\infty} \int_{\bf C} &\bP[\wt\tau_{k,z}<\tau] 
\cdot |\log( |2^{-k}z| \wedge 1/2 )|^{-1}  
\cdot (1+|\log |2^{-k}z||)^{\theta}
\,dz
\preceq\sum_{k=1}^{\infty} \int_{\bf D} \bP[\wt\tau_{k,z}<\tau] |\log |2^{-k}z||^{-1+\theta}  \,dz\\
&\qquad + \int_{\bf C-\bf D} \sum_{k=1}^{\lfloor\log |z|/\log 2 \rfloor} \bP[\wt\tau_{k,z}<\tau] \cdot (1+|\log |2^{-k}z||)^{\theta} \,dz\\
&\qquad+ \int_{\bf C-\bf D} \sum_{k=\lfloor\log |z|/\log 2 \rfloor+1}^{\infty} \bP[\wt\tau_{k,z}<\tau] (|\log |2^{-k}z||+1)^{-1+\theta}  \,dz\\
\preceq&\,
  \int_{\bf D} \sum_{k=1}^{\infty} k^{\theta-2} (1+|\log |z||) \,dz
 +\int_{\bf C-\bf D} \exp(-|z|/10)|z|^2 (1+\log |z|)^{1+\theta} \,dz\\
 &\qquad+ \int_{\bf C-\bf D} \sum_{k=1}^{\infty} k^{\theta-2} \exp(-|z|/10) \,dz\\
<&\,\infty.
\end{split}
\end{equation*}
\end{proof}

We are now ready to prove Proposition \ref{prop1}.
\begin{proof}[Proof of Proposition \ref{prop1}]
We will bound the integrand in the statement of the proposition for each fixed $z\in\bf C$. By rotational invariance it is sufficient to consider $z>0$. Fix $z>0$, and for $k\in\bf Z$ define
\begin{equation*}
X_{k}:=\frac{R_z^2}{\mcl L(A_z)}(|\log R_z|+1)^\theta \one{2^{-(k+1)}\leq R_z/z<2^{-k}}.
\end{equation*}
Then write
\begin{equation}
\begin{split}
\E\left[\frac{R_z^2}{\mcl L(A_z)}(|\log R_z|+1)^\theta\right] =
\sum_{k\in\Z,\,k\leq -3}^\infty \E\left[X_{k}\right]
+\sum_{k=-2}^{0} \E\left[X_{k}\right]
+\sum_{k=1}^\infty \E\left[X_{k}\right].
\end{split}
\label{eq12}
\end{equation}
In order to bound the terms on the right side we first observe that for any $\eta>0$ and with $f(t)=\exp(-t)$ being the probability density function of $\tau$, scale invariance of Brownian motion gives
\begin{equation}
\begin{split}
&\bP\left[ \sup_{t\in[0,\tau]} |W(t)|>\eta \right]
= \int_{0}^{\infty} f(s) \bP\left[ \sup_{t\in[0,1]} |W(t)|>\eta s^{-1/2} \right]\,ds
\preceq \int_{0}^{\infty} \exp(-s) s^{1/2}\eta^{-1} \exp(-\eta^2/(2s))\,ds\\
&\qquad\leq \int_{0}^{\eta} \eta^{-1/2} \exp(-\eta^2/(2s))\,ds
+ \int_{\eta}^{\infty} s^{1/2}\eta^{-1} \exp(-s) s\,ds
\preceq \exp(-\eta/3).
\end{split}
\label{eq15}
\end{equation}

We now bound the terms in the first sum on the right side of \eqref{eq12}. See Figure \ref{fig-ecc-int1}. Throughout this paragraph $k\leq -3$. We decompose $W$ into two parts: $W_1=W|_{[0,\tau']}$ and $W_2=W|_{[\tau',\infty)}$, where $\tau'=\inf\{t\geq 0\,:\,|W(t)-z|\geq 2^{-(k+2)}z \}$. Conditioned on $W(\tau')$ the two curves $W_1$ and $W_2$ are independent. On the event $2^{-(k+1)}\leq R_z/z<2^{-k}$ we know that $\tau'<\tau$. Conditioned on $\tau>\tau'$ the random variable $\tau-\tau'$ has the law of a unit rate exponential random variable, so by Proposition \ref{prop3} with $\eta=2^{-(k+1)}z$ and $\mcl G^*$ defined as in the statement of the proposition (with the annulus centered at $z$), the estimate \eqref{eq8} holds. On the event $2^{-(k+1)}\leq R_z/z<2^{-k}$ there is a $G\in\mcl G^*$ such that $G\subset A_z$, so $\E[X_k\,|\,\tau>\tau']\preceq (1+|\log( 2^{-k}z)|)^\theta$. We conclude by \eqref{eq15} that
\begin{equation*}
\E\left[X_{k}\right]
\preceq \bP[ \tau>\tau' ] \cdot \E\left[X_k\,|\,\tau>\tau'\right]
\preceq \exp(-2^{-k-3}z/3)(1+|\log(2^{-k}z)|)^\theta,
\end{equation*}
so the first sum on the right side of \eqref{eq12} is finite and integrable.

\begin{figure}[ht]
\begin{center}
\includegraphics[scale=1]{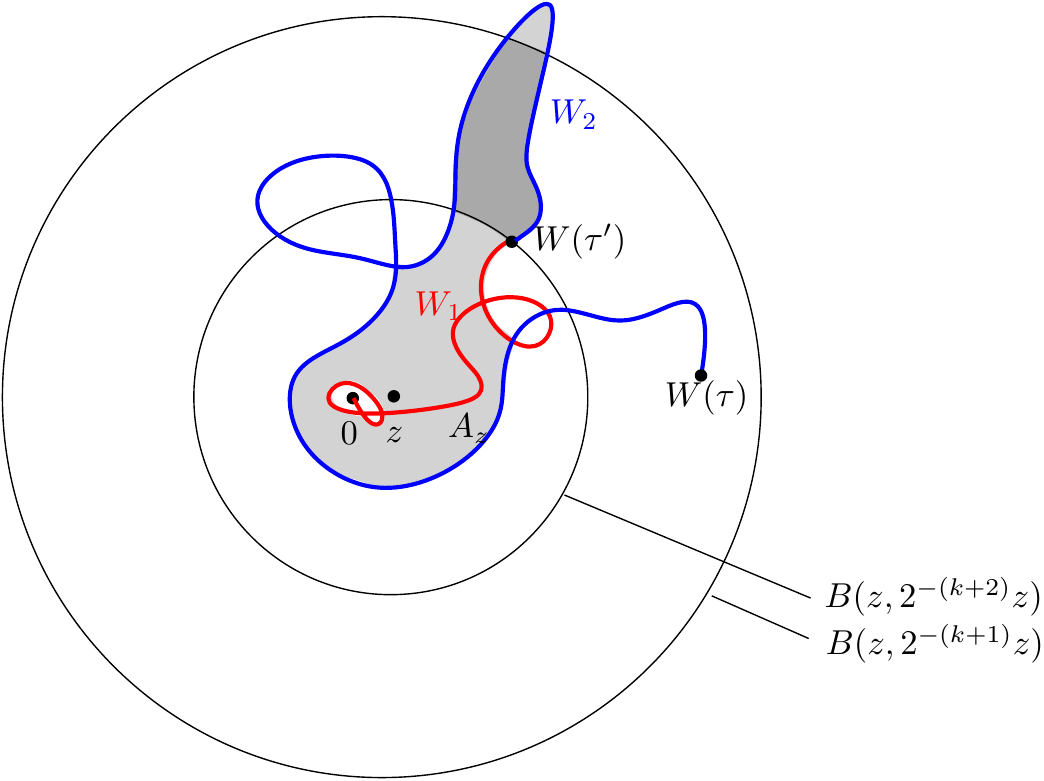}
\end{center}
\caption{We bound the eccentricity of $A_z$ on the event that $2^{-(k+1)}\leq R_z/z<2^{-k}$ for $k\leq -3$.
}
\label{fig-ecc-int1}
\end{figure}

Next we bound the terms in the second sum on the right side of \eqref{eq12}. Throughout the paragraph $k\in\{-2,-1,0\}$. We decompose $W$ into three parts: $\wt W_1=W|_{[0,\wt \tau]}$, $W_2=W|_{[\wt\tau,\wh\tau]}$ and $W_3=W|_{[\wh\tau,\infty)}$, where 
$\wt\tau=\inf\{t\geq 0\,:\,|W(t)|\geq 2^{-1}z \}$ and 
$\wh\tau=\inf\{t\geq \wt{\tau}\,:\,|W(t)-z|\leq 2^{-5}z \}$. Conditioned on $W(\wt{\tau})$ and $W(\wh{\tau})$ the processes $W_1$, $W_2$ and $W_3$ are independent. On the event $2^{-(k+1)}\leq R_z/z<2^{-k}$ we have $\wt{\tau}<\tau$. Therefore
\begin{equation}
\begin{split}
\E\left[X_k\right]
&= \bP[\wt{\tau}<\tau]\cdot \E\left[X_k\,|\,\wt{\tau}<\tau \right]\\
&=\bP[\wt{\tau}<\tau]\cdot
\big(\bP[\wh{\tau}<\tau\,|\,\wt{\tau}<\tau]
\cdot \E\left[X_k\,|\,\wt{\tau},\wh{\tau}<\tau \right]
+
\bP[\wh{\tau}\geq \tau\,|\,\wt{\tau}<\tau]
\cdot \E\left[X_k\,|\,\wt{\tau}<\tau,\wh{\tau}\geq\tau \right]\big).
\end{split}
\label{eq16}
\end{equation}
By \eqref{eq15} we have $\bP[\wt{\tau}<\tau]\preceq \exp(-z2^{-1}/3)$. By Proposition \ref{prop3} and since $\tau-\wh{\tau}$ has the law of a unit rate exponential random variable conditioned on $\wh{\tau}<\tau$, we have $\E[X_k\,|\,\wt\tau,\wh{\tau}<\tau]\preceq |\log(z\wedge 1/2)|^{-1}\cdot (1+|\log z|)^\theta$, since with $\eta=2^{-5}z$ there is a $G\in\mcl G^*$ (with $\mcl G^*$ as in Proposition \ref{prop3}) such that $G\subset A_z$. On the event $\wh{\tau}\geq\tau$ we have $X_k\preceq (1+|\log z|)^\theta$, since $B(z,2^{-5}z)\subset A_z$. Inserting these estimates into \eqref{eq16} we get
\begin{equation*}
\begin{split}
\E\left[X_k\right]
&\preceq \exp(-z/6)
\cdot(1+|\log z|)^\theta,
\end{split}
\end{equation*}
so the second sum  on the right side of \eqref{eq12} is finite and integrable.

We now bound the terms in the third sum on the right side of \eqref{eq12}. See Figure \ref{fig-ecc-int3}. We proceed exactly as when bounding the terms in the second sum, except that we define $\wt\tau=\inf\{t\geq 0\,:\,|W(t)-z|\leq 2^{-k}z \}$ and 
$\wh\tau=\inf\{t\geq \wt{\tau}\,:\,|W(t)-z|\leq 2^{-(k+1)}z \}$. As before we have $\wt{\tau}<\tau$ on the event $2^{-(k+1)}\leq R_z/z<2^{-k}$, so \eqref{eq16} still holds. By Proposition \ref{prop3} applied with $W_3$ and $\eta=2^{-(k+1)}z$, we have $\E[X_k\,|\,\wt\tau,\wh{\tau}<\tau]\preceq |\log\big((2^{-k}z)\wedge (1/2)\big)|^{-1}\cdot (1+|\log 2^{-k}z|)^\theta$, since on the event $2^{-(k+1)}\leq R_z/z<2^{-k}$ there is a $G\in\mcl G^*$ (with $\mcl G^*$ as in Proposition \ref{prop3}) such that $G\subset A_z$. By Lemma \ref{prop6} applied with $U=B(z,2^{-(k+1)}z)$ and $V=B(z,2^{-k}z)$ we have $\bP[\wh{\tau}\geq\tau\,|\,\wt{\tau}<\tau]\preceq |\log\big((2^{-k}z)\wedge (1/2)\big)|^{-1}$. On the event $\wh{\tau}\geq\tau$ we have $X_k\preceq (1+|\log(2^{-k}z)|)^\theta$, since $B(z,2^{-(k+1)}z)\subset A_z$. Inserting these estimates into \eqref{eq16} we get
\begin{equation*}
\begin{split}
\E\left[X_k\right]
= \bP[\wt\tau<\tau]\cdot |\log\big((2^{-k}z)\wedge (1/2)\big)|^{-1}
\cdot (1+|\log (2^{-k}z)|)^\theta.
\end{split}
\end{equation*}
It now follows from Lemma \ref{prop8} that the third sum on the right side of \eqref{eq12} is finite and integrable.
\end{proof}

\begin{figure}[ht]
\begin{center}
\includegraphics[scale=1]{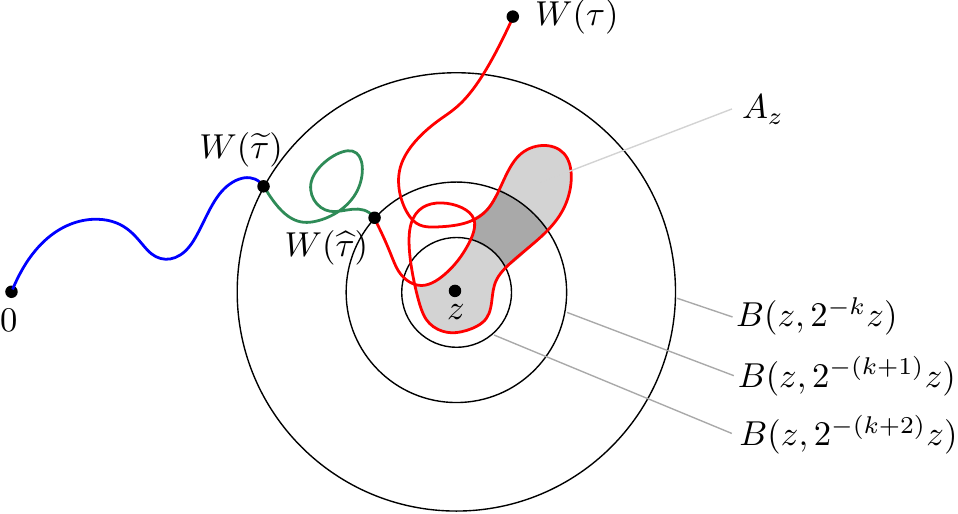}
\end{center}
\caption{We bound the eccentricity of $A_z$ on the event that $2^{-(k+1)}\leq R_z/x<2^{-k}$ for $k\geq 1$.
}
\label{fig-ecc-int3}
\end{figure}

\section{Estimates on the in-radius of Brownian components}
\label{sec:prop5}
In this section we will first prove Proposition \ref{prop5}, and then we will give a short independent proof (using a result by \cite{legall}) that the analogue of Theorem \ref{thm2} with $r(i)$ instead of $R(i)$ holds. We will need the following lemma in our proof of Proposition \ref{prop5}, which is an immediate consequence of a result by \citet*{legall}.
\begin{Lemma}
Let $A(i)$ be the $i$th largest bounded component of ${\bf C}-W([0,1])$, and let $\leb{A(i)}$ be the Lebesgue measure of $A(i)$. Then
$$\text{a.s.-}\lim_{i\rta\infty} i(\log i)^2 \leb{A(i)}= 2\pi.$$
\label{prop4}
\end{Lemma}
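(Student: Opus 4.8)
The plan is to deduce the claim from Le~Gall's asymptotic for the counting function
$\mcl N(\eps):=\#\{i\,:\,\leb{A(i)}\geq\eps\}$, namely $\eps\,|\log\eps|^2\,\mcl N(\eps)\rta 2\pi$ as $\eps\rta 0$, by inverting the relation between $\mcl N$ and the decreasing rearrangement $(\leb{A(i)})_{i}$. First I would record two preliminaries. Since $W([0,1])$ is a.s.\ bounded, all the bounded components $A(i)$ lie in a common bounded set, so $\sum_i\leb{A(i)}<\infty$ a.s.; in particular $\leb{A(i)}\rta 0$ as $i\rta\infty$, which is exactly what makes Le~Gall's small-$\eps$ asymptotic applicable at $\eps=\leb{A(i)}$. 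Second, because the $A(i)$ are listed in nonincreasing order of area, one has on the full-probability event on which all areas are finite the elementary squeeze
\[
\mcl N(\eps)<i\leq \mcl N(\leb{A(i)})\qquad\text{for every }\eps>\leb{A(i)},
\]
with no need to exclude ties: the left inequality holds because any component of area $\geq\eps>\leb{A(i)}$ must have index $<i$, and the right inequality because $A(1),\dots,A(i)$ all have area $\geq\leb{A(i)}$.

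Next I would combine these with Le~Gall's theorem. Plugging $\mcl N(\eps)=(2\pi+o(1))/(\eps|\log\eps|^2)$ into $\mcl N(\leb{A(i)})\geq i$ gives $\leb{A(i)}\,|\log\leb{A(i)}|^2\leq (2\pi+o(1))/i$; applying the other inequality with $\eps=(1+1/i)\leb{A(i)}$ (for which $\eps|\log\eps|^2\sim\leb{A(i)}|\log\leb{A(i)}|^2$ since $\log(1+1/i)\rta 0$ while $|\log\leb{A(i)}|\rta\infty$) yields the matching lower bound. Hence, a.s.,
\[
\leb{A(i)}\,|\log\leb{A(i)}|^2=\frac{2\pi+o(1)}{i}\qquad\text{as }i\rta\infty.
\]

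Finally I would invert this to isolate $\leb{A(i)}$. Taking logarithms gives $\log\leb{A(i)}+2\log|\log\leb{A(i)}|=-\log i+O(1)$; since $\log|\log\leb{A(i)}|=o(|\log\leb{A(i)}|)$ and $|\log\leb{A(i)}|\rta\infty$, this forces $|\log\leb{A(i)}|\sim\log i$. Substituting $|\log\leb{A(i)}|^2=(1+o(1))(\log i)^2$ back into the displayed relation gives $\leb{A(i)}=(2\pi+o(1))/(i(\log i)^2)$, i.e.\ $i(\log i)^2\leb{A(i)}\rta 2\pi$ a.s., as desired. The only step that needs a little care is this last asymptotic inversion, together with the accompanying observation that the various $o(1)$ errors are functions of $\leb{A(i)}$ and hence vanish as $i\rta\infty$; everything else is immediate from Le~Gall's result.
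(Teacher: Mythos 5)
Your proof is correct and rests on the same two ingredients as the paper's: Le~Gall's asymptotic for $\mcl N(\eps)$ and the monotone relation between $\mcl N$ and the decreasing sequence $(\leb{A(i)})$. The execution differs slightly. The paper avoids the asymptotic inversion in your final step by testing $\mcl N$ directly at the target threshold $\eps=c/(i(\log i)^2)$ for $c$ on either side of $2\pi$: Le~Gall's result gives $\mcl N\bigl(c/(i(\log i)^2)\bigr)\cdot c/i\rta 2\pi$ a.s., so $\mcl N\bigl(c/(i(\log i)^2)\bigr)<i$ eventually when $c>2\pi$ (forcing $\leb{A(i)}<c/(i(\log i)^2)$ by monotonicity) and $>i$ eventually when $c<2\pi$, which squeezes $i(\log i)^2\leb{A(i)}$ between arbitrary $c_1<2\pi<c_2$. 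You instead evaluate the relation at $\eps\approx\leb{A(i)}$ to get $\leb{A(i)}|\log\leb{A(i)}|^2\sim 2\pi/i$ and then solve asymptotically for $\leb{A(i)}$, which requires the extra (correct) step $|\log\leb{A(i)}|\sim\log i$. Both routes are valid; the paper's is marginally shorter because the inversion is built into the choice of test point, while yours is arguably more systematic in that it first derives a clean asymptotic identity before extracting the leading order.
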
 
\begin{proof}
For any $\ep>0$ let $\mcl N(\ep)$ be the number of components with area at least $\ep$, and observe that $\mcl N(\leb{A(i)})=i$ for any $i\in\N$. \citet*[Theorem 11.6]{legall} proved that $\mcl N(\ep)=2\pi a(\ep)\ep^{-1}|\log\ep|^{-2}$ for random variables $a(\ep)$ satisfying a.s.-$\lim_{\ep\rta 0}a(\ep)= 1$. Therefore, for any $c>0$,
\begin{equation}
\mcl N\left(\frac{c}{i(\log i)^2} \right) \cdot \frac{c}{i} = 
2\pi a\left(\frac{c}{i(\log i)^2}\right) \frac{i(\log i)^2}{c(\log i+\log(\log i)^2 - \log c)^2} \cdot \frac{c}{i} \rta
2\pi\qquad\text{a.s.\,\,as\,\,}i\rta\infty.
\end{equation}
If $c>2\pi$ we see that $\mcl N\left(c (i(\log i)^2)^{-1} \right)<i$ for all sufficiently large $i$; similarly $c<2\pi$ implies that $\mcl N\left(c (i(\log i)^2)^{-1} \right)>i$ for all sufficiently large $i$. These observations imply the lemma.
\end{proof}

\begin{proof}[Proof of Proposition \ref{prop5}]
We assume the components of ${\bf C}-W([0,1])$ are ordered such that $(\leb{A(i)})_{i\in\N}$ is decreasing. \citet*[Theorem 14]{wwthesis} proved that there is a constant $c>0$ such that 
\begin{equation*}
\text{a.s.-}\lim_{n\rta\infty}
\frac{1}{n}\left|\left\{ i\in\{1,\dots,n\}\,:\,10 r(i)\geq \leb{A(i)}^{1/2} \right\}\right| \rta 3c,
\end{equation*}
since under the law $\mcl L_1$ (which describes the limiting law of the components $A(i)$), ten times the in-radius is larger than the square-root of the area with positive probability. Therefore, a.s.\ for all sufficiently large $n\in\N$,
\begin{equation}
\left|\left\{ i\in\{n,\dots,2n-1\}\,:\,10r(i)\geq \leb{A(i)}^{1/2} \right\}\right| \geq cn.
\end{equation} 
By Lemma \ref{prop4}, $\leb{A(n)}n(\log n)^2>1$ a.s.\ for all sufficiently large $n$. Therefore, a.s.\ for all sufficiently large $n$, 
\begin{equation}
\begin{split}
\sum_{n\leq i< 2n} r(i)^2|\log r(i)| &\succeq \left|\left\{ i\in\{n,\dots,2n-1\}\,:\,10 r(i)\geq \leb{A(i)}^{1/2} \right\}\right|\cdot \leb{A(n)} |\log \leb{A(n)}|\\
&\geq cn \cdot \frac{\log n + 2\log\log n}{n (\log n)^2} \\
&\asymp (\log n)^{-1}.
\end{split}
\end{equation}
Since $\sum_{k=1}^\infty |\log 2^k|^{-1}=\infty$ the result (i) follows as
\begin{equation}
\sum_{i=1}^\infty r(i)^2|\log r(i)|
= \sum_{k=0}^\infty \sum_{2^k\leq i< 2^{k+1}} r(i)^2|\log r(i)|
=\infty.
\end{equation}

\end{proof}

We will end this section with a short independent proof that, with the notation of Proposition \ref{prop5}, the following holds for any $\theta<1$
\begin{equation}
\E\left[\sum_{i=1}^\infty r(i)^2|\log r(i)|^\theta\right]<\infty.
\label{eq40}
\end{equation}
Observe that this result is immediate by Theorem \ref{thm2}, since $r(i)\leq R(i)$ for all $i\in\N$, and since Theorem \ref{thm2} with $\theta=0$ implies that the number of components $A(i)$ with an out-radius above (say) $1/10$ has a finite expectation. We will give an alternative proof of \eqref{eq40} by using a result of \cite{legall}. Since $\pi r(i)^2\leq \leb{A(i)}$ for all $i\in\N$, it is sufficient to prove that for any $\theta<1$,
\begin{equation}
\E\left[\sum_{i=1}^{\infty} \leb{A(i)}(|\log \leb{A(i)}|+1)^\theta \right] <\infty.
\label{eq35}
\end{equation}
For any $k\in\Z$ define
\begin{equation*}
\I_k :=\{ i\in\N\,:\,\leb{A(i)}\in[2^k,2^{k+1}) \},\qquad
U_k:= \sum_{i\in\I_k} \leb{A(i)}.
\end{equation*}
First we bound the sum in \eqref{eq35} only considering $i\in \I_+$ for $\I_+:=\bigcup_{k\geq 0}\I_k$. Define $X:=\sup_{t\in[0,1]}|W(t)|$, and note that $\bP[X>M]\preceq \exp(-M^2/2)$ by the Gaussian tail bound. Since $X<1/2$ implies $\mcl I_+=\emptyset$, and since $X<2^n$ implies $|\I_+|\preceq 2^{2n}$ and $\leb{A(i)}\preceq 2^{2n}$ for all $i\in\I_+$, we have
\begin{equation*}
\begin{split}
\E\left[\sum_{i\in\I_+} \leb{A(i)}(|\log \leb{A(i)}|+1)^\theta \right]
&\leq \sum_{n=0}^\infty \E\left[\one{2^{n-1}\leq X< 2^n}
\sum_{i\in\I_+} \leb{A(i)}(|\log \leb{A(i)}|+1)^\theta \right]\\
&\preceq \sum_{n=0}^\infty \exp(-2^{2n-3})\cdot 2^{2n}\cdot 2^{2n}(|\log 2^{2n}|+1)^\theta<\infty.
\end{split}
\end{equation*}
Next we consider the sum in (ii) for $i\in\mcl I_-:=\bigcup_{k< 0}\I_k$. \citet*[Proposition 3]{legall} proved that $\E[U_{-k}]\asymp k^{-2}$ for all $k\in\N$, so
\begin{equation*}
\begin{split}
\E\left[\sum_{i\in\I_-} \leb{A(i)}(|\log \leb{A(i)}|+1)^\theta \right]
&\preceq \sum_{k=1}^{\infty} \E\left[|\I_{-k}|\cdot 2^{-k}|\log 2^{-k}|^\theta \right]\\
&\asymp \sum_{k=1}^{\infty} \E\left[2^{k} U_{-k}\cdot 2^{-k}|\log 2^{-k}|^\theta \right]
\asymp \sum_{k=1}^{\infty} k^{-2+\theta} 
<\infty.
\end{split}
\end{equation*}
Combining the bounds for the sum over $\mcl I_+$ and $\I_-$, respectively, completes the proof of \eqref{eq40}.

\appendix
\section{Basic estimates for planar Brownian motion}

In the first part of the appendix we state some standard results on hitting probabilities for planar Brownian motion. In the second part of the appendix we will consider a conditioned Brownian motion $W$ in a domain $D$ and a collection $\mcl Q$ of polar rectangles touching $\partial D$, and we will prove (under certain assumptions on $W$, $D$ and $\mcl Q$) uniform exponential bounds for the number of polar rectangles hit by $W$ before hitting $\partial D$.

\subsection*{Standard estimates for planar Brownian motion}
Recall throughout the section that if $K\subset\bf C$ and $W$ is a Brownian motion, then $T_K=\inf\{t\geq 0\,:\,W(t)\in K \}$ denotes the time $W$ hits $K$.

\begin{Theorem}
{\bf (Beurling's theorem).} Consider a set $K \subset {\bf C}$,
its radial projection $\Pi (K) = \{ |z|: z \in K \}$
and the ball $B(R)$. Then a Brownian motion started at
0 and killed upon exiting the ball is more likely to hit $K$ than $\Pi(K)$, i.e., 
$$ \bP_0[T_K \le T_{\partial B(R)}] 
\ge \bP_0[T_{\Pi(K)} \le T_{\partial B(R)}]. $$
\end{Theorem}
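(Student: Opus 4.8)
This is Beurling's classical projection theorem; I would treat it as standard and cite, e.g., \cite{lawler-book} (see also the textbook treatments in Ahlfors's \emph{Conformal Invariants} and in Garnett--Marshall's \emph{Harmonic Measure}). For a self-contained argument the plan is to deduce it from the monotonicity of harmonic measure under symmetrization.

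First I would make the usual reductions: hitting probabilities of Brownian motion are inner regular over compact sets, and taking radial projections commutes with increasing unions, so it suffices to prove the inequality when $K$ is compact and contained in $\{r_0 \le |z| \le R_0\}$ for some $0 < r_0 < R_0 < R$. Write $u(z) := \bP_z[T_K \le T_{\partial B(R)}]$ and $v(z) := \bP_z[T_{\Pi(K)} \le T_{\partial B(R)}]$; then $u$ is the harmonic measure of $K$ in $B(R)\setminus K$ (the least nonnegative superharmonic function in $B(R)$ that dominates $\mathbf 1_K$ and vanishes on $\partial B(R)$), similarly $v$ for $\Pi(K)$, and the assertion is exactly $u(0) \ge v(0)$. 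The map $K \mapsto \Pi(K)$ is the extreme case of circular symmetrization of the obstacle about the positive real ray, and the theorem says this operation does not increase the harmonic measure of the obstacle as seen from the centre $0$; I would obtain this from the classical symmetrization machinery for harmonic measure (the length--area / extremal-metric method, or Baernstein's $\star$-function). The main obstacle is precisely this symmetrization inequality, together with getting its orientation right: spreading the obstacle out along a ray makes it harder to reach from the centre, hence lowers its harmonic measure, which is the direction the theorem demands.

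An alternative, fully probabilistic route uses the skew-product representation of planar Brownian motion. Writing $W_t = \exp(\beta_{\sigma(t)} + i\gamma_{\sigma(t)})$ for $W$ started at a small point on $\partial B(r_0)$ (and letting $r_0 \downarrow 0$ at the end), with $\beta$ and $\gamma$ independent standard one-dimensional Brownian motions and $\sigma$ a strictly increasing time change measurable with respect to $\beta$, conditioning on $\beta$ fixes $L := \sigma(T_{\partial B(R)})$ and the relatively closed set $S := \{ s \le L : e^{\beta_s} \in \Pi(K)\}$, which is bounded away from $0$ because $\Pi(K)$ is. Setting $\bar\gamma := \gamma \bmod 2\pi$, the events $\{T_{\partial B(R)} < T_K\}$ and $\{T_{\partial B(R)} < T_{\Pi(K)}\}$ become, conditionally on $\beta$, the events that $\bar\gamma$ avoids $\Theta(e^{\beta_s})$ for every $s \in S$, resp.\ avoids $2\pi\Z$ for every $s \in S$, where $\Theta(\rho) := \{\arg z : z \in K,\, |z|=\rho\}$ is a nonempty closed subset of the circle for each $\rho \in \Pi(K)$. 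So the claim reduces to the conditional comparison
\[
\bP\big(\exists\, s \in S:\ \bar\gamma_s \in \Theta(e^{\beta_s}) \,\big|\, \beta\big) \ \ge\ \bP\big(\exists\, s \in S:\ \bar\gamma_s \in 2\pi\Z \,\big|\, \beta\big),
\]
and the main obstacle is to prove this inequality for an arbitrary closed set $S$ bounded away from $0$; the relevant point is that $\bar\gamma$ has had time to spread out over the whole circle before the radial part first enters the range of radii of $\Pi(K)$, so catching $\bar\gamma$ in any nonempty target over $S$ is at least as likely as catching it at the single distinguished value.
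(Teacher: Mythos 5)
The paper itself does not prove Beurling's projection theorem — it cites \cite{wwbeurling} and moves on — which is exactly your lead suggestion, so in that respect you match the paper. Your symmetrization sketch is the standard classical route and a reasonable thing to point a reader to. The skew-product sketch, however, should not be offered as an alternative proof: the conditional inequality you end with is essentially the theorem itself restated for a one-dimensional circular Brownian motion hitting a space-time obstacle with nonempty slices, versus its angular projection onto a single fixed angle, and the observation that $\bar\gamma$ equilibrates before $S$ begins does not deliver it. After a near-miss the conditional law of $\bar\gamma_s$ given $\beta$ and the past is no longer uniform, and the moving target $\Theta(e^{\beta_s})$ can be positioned arbitrarily relative to it, so one is still left needing a genuine rearrangement inequality — the reformulation is fine as motivation but offers no shortcut over the symmetrization argument.
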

\noi For a proof see for example \cite{wwbeurling}. \\

We will also need some standard bounds for Brownian motion. Intuitively, they estimate
the probability that a Brownian motion started near a boundary will ``escape'' and go for
a certain distance without hitting the boundary.

\begin{Proposition}\label{estunitdisk}
Consider a Brownian motion started at $z$ inside the disk $B(R)$.
Then the probability that a Brownian motion will travel a distance at least $r$
from $z$ before hitting the disk boundary satisfies:
\begin{equation*}
\pz ( T_{\partial B(z,r)} < T_{\partial B(R)} ) \le 2 \delta / r,
\end{equation*}
where  $\delta = R - |z|$ the distance from $z$ to the boundary. 
\end{Proposition}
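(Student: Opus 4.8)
The plan is to reduce the inequality to a half-plane and then evaluate the resulting hitting probability exactly by a conformal map.

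\textbf{Trivial cases.} If $r\le\delta$ then $2\delta/r\ge 2$ and the bound is vacuous, and if $z=0$ then $\delta=R$ and the left side equals $1$ when $r<R$ and $0$ when $r\ge R$, so the bound holds. Assume from now on that $z\ne 0$ and $0<\delta<r$.

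\textbf{Reduction to a half-plane.} Let $p:=Rz/|z|$ be the point of $\partial B(R)$ nearest $z$, let $\ell$ be the tangent line to $\partial B(R)$ at $p$, and let $H$ be the open half-plane bounded by $\ell$ that contains $z$; then $\mathrm{dist}(z,\ell)=\delta$, $B(R)\subseteq H$, and $\ell\cap B(R)=\varnothing$. Since $B(R)\subseteq H$, a Brownian path from $z$ must leave $B(R)$ before it can reach $\ell$, so $T_{\partial B(R)}\le T_\ell$ and hence
\[
\pz\big(T_{\partial B(z,r)}<T_{\partial B(R)}\big)\ \le\ \pz\big(T_{\partial B(z,r)}<T_\ell\big).
\]
The right side depends only on the scale-invariant ratio $\delta/r$, and it remains to bound it by $2\delta/r$.

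\textbf{Conformal evaluation.} The Brownian motion killed on leaving $\Omega:=H\cap B(z,r)$ exits through the chord $\ell\cap B(z,r)$ or through the arc $\Gamma:=\partial B(z,r)\cap\overline H$, and $\pz(T_{\partial B(z,r)}<T_\ell)$ equals the harmonic measure of $\Gamma$ from $z$ in $\Omega$. Elementary geometry shows the chord and $\Gamma$ meet at the two endpoints $p_\pm$ of the chord at interior angle $\gamma=\tfrac\pi2+\arcsin(\delta/r)$. The Möbius map $w\mapsto (w-p_-)/(w-p_+)$ sends $p_-\mapsto 0$, $p_+\mapsto\infty$ and $\Omega$ onto an angular sector of opening $\gamma$, carrying the chord and $\Gamma$ to its two bounding rays $\mathcal R_0$ and $\mathcal R_\gamma$ respectively, and sending $z$ to a point at angular coordinate $2\arcsin(\delta/r)$ measured from $\mathcal R_0$. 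Since harmonic measure is conformally invariant, and since in a sector of opening $\gamma$ the harmonic measure of $\mathcal R_\gamma$ from a point at angular coordinate $\psi\in(0,\gamma)$ from $\mathcal R_0$ is $\psi/\gamma$ (the function $\psi\mapsto\psi/\gamma$ being harmonic with the correct boundary values), we obtain
\[
\pz\big(T_{\partial B(z,r)}<T_\ell\big)\ =\ \frac{2\arcsin(\delta/r)}{\tfrac\pi2+\arcsin(\delta/r)}.
\]

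\textbf{Conclusion.} It remains to check $\dfrac{2\arcsin x}{\tfrac\pi2+\arcsin x}\le 2x$ for $x=\delta/r\in(0,1]$; rearranged this reads $\arcsin x\,(1-x)\le\tfrac\pi2 x$, which follows from $\arcsin x\le\tfrac\pi2 x$ on $[0,1]$ (the convex function $\arcsin$ lies below the chord joining $(0,0)$ and $(1,\tfrac\pi2)$) together with $0\le 1-x\le 1$. Combining the displays gives the proposition.

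I expect the main obstacle to be the bookkeeping in the conformal step: correctly computing the opening angle $\gamma$ of the circular lune, the images of the chord and of $\Gamma$, and above all the angular coordinate of the image of $z$ — the last being delicate because a branch or orientation slip would replace the answer by its complement $1-\tfrac{2\arcsin x}{\pi/2+\arcsin x}$, which is false. As a more robust alternative one can avoid the exact formula: bound $\pz(T_{\partial B(z,r)}<T_\ell)$ by splitting $W-z$ into its component normal to $\ell$ (whose first passage to level $\delta$ is $T_\ell$) and its component parallel to $\ell$, estimating the chance the normal component first reaches a large negative value by gambler's ruin and the chance the parallel component becomes large by the reflection principle together with a Gaussian tail bound; this is softer but produces only $C\delta/r$ with $C$ somewhat above $2$.
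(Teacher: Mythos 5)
Your argument is correct and is essentially the same as the paper's: reduce to the tangent line by monotonicity, map the circular lune conformally to a sector of opening $\pi/2+\arcsin(\delta/r)$, read off the exact harmonic measure $2\arcsin(\delta/r)/(\pi/2+\arcsin(\delta/r))$, and finish with the elementary inequality $\arcsin x\le(\pi/2)x$ (the paper phrases it as $\sin\alpha\ge(2/\pi)\alpha$). The only cosmetic difference is that the paper fixes coordinates with $L$ the real axis and $z=\delta i$ and writes the Möbius map explicitly as $w\mapsto(w+d)/(w-d)$ with $d=\sqrt{r^2-\delta^2}$.
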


\begin{proof} 
This is a standard result, but for completeness we sketch the proof. Let $L$ be the tangent to $\partial B(R)$ that is closest to $z$, and assume without loss of generality that $L$ is the $x$-axis and $z=\delta i$. It is enough to estimate the probability that a Brownian motion travels distance $r$ before hitting $L$. We can use the M\"obius map $\phi(z)=(z+d)/(z-d)$, $d=(r^2-\delta^2)^{1/2}$, to send $\partial B(z,r)$ and $L$ into two axes meeting at the origin at angle $\pi/2+\alpha$, where $\alpha$ satisfies $\sin\alpha=\delta/r$. We have $\arg\phi(z)=2\alpha$, so the probability of interest is equal to exactly $2\alpha/(\pi/2+\alpha)$. The proposition follows by using $\sin \alpha \ge (2/\pi) \alpha$.
\end{proof}

\begin{Proposition}\label{estBMsqrt}
If $0<a<b\le1$, then the probability that a Brownian motion started at 0 hits the unit disk
before hitting the segment $[a,b]$ satisfies
\begin{equation*}
\pz ( T_{[a,b]} > T_{\partial {\bf D}} ) \le 2 \sqrt{a/b}.
\end{equation*}
\end{Proposition}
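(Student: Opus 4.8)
\textbf{The plan} is to reduce, by Brownian scaling, to a single harmonic‑measure computation in the unit disk, and then to evaluate that harmonic measure exactly by composing three explicit conformal maps.

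\emph{Step 1: reduction.} Set $c:=a/b\in(0,1)$. Since $b\le 1$ we have $T_{\partial B(b)}\le T_{\partial\bf D}$, so on the event $\{T_{[a,b]}>T_{\partial\bf D}\}$ the path has not met $[a,b]$ by time $T_{\partial B(b)}$; but until that time it stays strictly inside $B(b)$, hence it has not met any point of $[a,\infty)\setminus[a,b]\subset\{|z|>b\}$ either. Therefore $\{T_{[a,b]}>T_{\partial\bf D}\}\subseteq\{T_{[a,\infty)}>T_{\partial B(b)}\}$, and rescaling by $1/b$ turns the latter probability into $\bP_0(T_{[c,\infty)}>T_{\partial\bf D})=\bP_0(T_{[c,1)}>T_{\partial\bf D})$ (the endpoint $1$ and the slit tip $c$ are polar). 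So it suffices to bound $p(c):=\bP_0(T_{[c,1)}>T_{\partial\bf D})$, the probability that Brownian motion from $0$ leaves $\bf D$ through $\partial\bf D$ rather than hitting the slit $[c,1)$.

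\emph{Step 2: the conformal map.} I would compute $p(c)$ by mapping the slit domain $\bf D\setminus[c,1)$ conformally onto the first quadrant $Q$ via the composition of: (i) the disk automorphism $\psi(z)=(z-c)/(1-cz)$, which fixes the real diameter, sends $c\mapsto 0$ and $1\mapsto 1$ (hence $[c,1)\mapsto[0,1)$) and $0\mapsto -c$; (ii) the branch of $\sqrt{\,\cdot\,}$ with cut along $[0,\infty)$, which maps $\bf D\setminus[0,1)$ onto the upper half‑disk $H=\{|w|<1,\ \im w>0\}$, sending the slit $[0,1)$ to the diameter $(-1,1)$, the circle $\partial\bf D$ to the semicircle $\Gamma$, and $-c\mapsto i\sqrt c$; (iii) the Möbius map $u\mapsto(1+u)/(1-u)$, which maps $H$ onto $Q$, sending $(-1,1)$ to the positive real axis, $\Gamma$ to the positive imaginary axis, and $i\sqrt c\mapsto q$ with $\arg q=\arctan\!\big(2\sqrt c/(1-c)\big)$. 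By conformal invariance of Brownian motion, $p(c)$ equals the probability that Brownian motion in $Q$ from $q$ exits through the positive imaginary axis. Since $\arg$ is a bounded harmonic function on $Q$, equal to $0$ on the positive real axis and $\pi/2$ on the positive imaginary axis, and Brownian motion exits $Q$ in finite time (as $Q$ lies in a half‑plane), optional stopping gives $p(c)=\tfrac2\pi\arg q=\tfrac2\pi\arctan\!\big(2\sqrt c/(1-c)\big)$.

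\emph{Step 3: simplify and bound.} Using $2\sqrt c/(1-c)=\tan(2\arctan\sqrt c)$ together with $2\arctan\sqrt c<\pi/2$ for $c<1$, the answer collapses to $p(c)=\tfrac4\pi\arctan\sqrt c\le\tfrac4\pi\sqrt c<2\sqrt c$, which with $c=a/b$ is the claimed bound (indeed slightly stronger, with constant $4/\pi$). The one place where care is needed — and where a naive estimate fails — is precisely this last step: bounding $\arctan\!\big(2\sqrt c/(1-c)\big)$ directly by $2\sqrt c/(1-c)$ only beats $2\sqrt c$ when $c\le 1-2/\pi$, so one must first rewrite the harmonic measure as $\tfrac4\pi\arctan\sqrt c$ before applying $\arctan x\le x$ (or else handle $c\ge 1-2/\pi$ by the trivial bound $2\sqrt c\ge 1$). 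Apart from that, the argument is routine; the only bookkeeping is tracking the boundary correspondences under the three maps and noting that the slit tip and the point $1$ are not hit.
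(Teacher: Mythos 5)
Your proof is correct and follows essentially the same route as the paper's sketch: kill on $\partial B(b)$ and rescale to reduce to $b=1$, then compose the Möbius map $z\mapsto(z-c)/(1-cz)$, the square root, and a map to a quadrant, and read off the harmonic measure as an angle. You flesh out the computation that the paper compresses into ``the same arguments as in Proposition~\ref{estunitdisk}'' and arrive at the exact value $\tfrac{4}{\pi}\arctan\sqrt{c}$, which gives the stated bound (indeed with the sharper constant $4/\pi$).
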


\begin{proof}
Again we only sketch the proof since this is a standard result.
By killing the Brownian motion on $\partial B(b)$ and using scale invariance we can assume $b=1$.
Using the M\"obius map $z\mapsto(z-a)/(1-az)$ we can reduce to the problem of estimating the probability that a
Brownian motion started at $-a$ exits the disk before hitting $[0,1]$. The conformal map $x \mapsto \sqrt x$
sends ${\bf D} - [0,1]$ into a half-disk and $-a$ into $i\sqrt a$.  Now we use the same arguments
as in the proof of Proposition~\ref{estunitdisk}.
\end{proof}

Finally, we state an estimate for the probability that a Brownian motion started ``near the middle'' of the unit disk hits a small ball that touches the boundary.

\begin{Proposition}\label{from_mid_to_edge}
Let $x \in (0,1)$, $z \in {\bf D}$ and $\eps \ge 1-x$. Then
\begin{equation*}
\pz ( T_{B(x, \eps)} < T_{\partial {\bf D}} ) \le 4 \eps / |z-x|
\end{equation*}
\end{Proposition}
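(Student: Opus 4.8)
\textbf{Proof plan for Proposition \ref{from_mid_to_edge}.}

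The plan is to reduce the statement to Proposition~\ref{estunitdisk} by a scaling argument. The idea is that $B(x,\eps)$ contains a ball of radius comparable to $\eps$ that is tangent to $\partial{\bf D}$ from the inside (since $\eps\ge 1-x$, the ball $B(x,\eps)$ reaches past the point $1\in\partial{\bf D}$), so hitting $\partial{\bf D}$ before hitting $B(x,\eps)$ forces the Brownian motion to avoid an internally tangent ball of radius of order $\eps$, which we can bound via the ``escape'' estimate already proved.

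In more detail, first I would observe that it suffices to prove the bound when $z=0$ (the general case then follows since a Brownian motion from $z$ must pass through a point near the center, or more simply by replacing $|z-x|$ by the relevant quantity — but in fact the cleanest route is to handle general $z$ directly). So: let $\rho = |z-x|$ and note that if $\rho\le 4\eps$ the bound is trivial since the right-hand side exceeds $1$; hence assume $\rho>4\eps$, so in particular $\eps<\rho\le 2$. Next, consider the point $p=(1-\eps)\cdot\frac{1}{|1|}=1-\eps$ on the positive real axis (or more precisely, run the Brownian motion until it hits $\partial{\bf D}$ or $B(x,\eps)$, and compare to the event of hitting $\partial B(1-\eps,\eps)\subset \overline{B(x,\eps)}$, a ball internally tangent to $\partial{\bf D}$ at $1$). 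To hit $\partial{\bf D}$ before $B(x,\eps)$, the motion must in particular, upon first entering the annular region $\{|w|>1-2\eps\}$ near the tangency point, escape distance of order $\eps$ without hitting $\partial B(1-\eps,\eps)$; but actually the simplest implementation is: apply the strong Markov property at the first hitting time of $\partial B(x,2\eps)$ (which occurs before hitting $\partial{\bf D}$ on the event of interest, since $\rho>4\eps$ means $z\notin B(x,2\eps)$ and $B(x,2\eps)\cap\partial{\bf D}\ne\emptyset$ forces the path near $\partial{\bf D}$ to come within $2\eps$ of $x$... ). To avoid getting bogged down: the core estimate is that from any point $w\in\partial B(x,2\eps)\cap{\bf D}$ we have $\bP_w(T_{B(x,\eps)}>T_{\partial{\bf D}})\le C\eps/(\text{dist of } w \text{ to } \partial{\bf D})$-type control via Proposition~\ref{estunitdisk} applied to the tangent ball, together with the fact that reaching $\partial B(x,2\eps)$ from $z$ has probability at most $2\eps/|z-x|$ by a harmonic-function (or Beurling) comparison. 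Multiplying the two bounds and tracking constants yields $4\eps/|z-x|$.

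The main obstacle I anticipate is bookkeeping the geometry near the tangency point so that Proposition~\ref{estunitdisk} applies cleanly: one must choose the auxiliary internally tangent ball $\partial B((1-\eps)u,\eps)$ for a suitable unit vector $u$ (chosen so this ball lies inside $\overline{B(x,\eps)}$, which is possible precisely because $\eps\ge 1-x$ and $x<1$), verify it is contained in $\overline{B(x,\eps)}\cap\overline{{\bf D}}$, and then bound the probability of reaching $\partial{\bf D}$ before this smaller ball by at most $2\eps/\delta$ where $\delta$ is the distance from the relevant starting point to $\partial{\bf D}$. Combining this with the estimate for the probability of getting within distance $2\eps$ of $x$ in the first place — which itself is an application of the optional stopping theorem to the bounded harmonic function $w\mapsto \log|w-x|$ restricted appropriately, or of Beurling's theorem — and optimizing the constants should give exactly the claimed factor $4$. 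No step is deep; the care is entirely in the constants and the choice of the tangent ball.
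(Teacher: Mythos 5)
There is a genuine gap, and it sits precisely at the step you describe as ``the fact that reaching $\partial B(x,2\eps)$ from $z$ has probability at most $2\eps/|z-x|$ by a harmonic-function (or Beurling) comparison.'' That claim is essentially the statement of the proposition itself (with $\eps$ replaced by $2\eps$), so invoking it makes the argument circular unless you give an independent proof of it; and none of the tools you name produce such a proof. Beurling's theorem gives \emph{lower} bounds on hitting probabilities, so it goes the wrong way. Proposition~\ref{estunitdisk} gives a bound of the form $2\delta/r$ with $\delta=1-|z|$, which is $O(\delta/|z-x|)$, not $O(\eps/|z-x|)$; when $z$ is near the center of ${\bf D}$, $\delta$ is of order $1$ while $\eps$ may be tiny, so this is far too weak. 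And optional stopping with $w\mapsto\log|w-x|$ only yields a logarithmic bound of the form $\log(|z-x|^{-1})/\log(\eps^{-1})$, which for small $\eps$ is much larger than $\eps/|z-x|$; the point is that hitting a ball \emph{tangent to the boundary} has polynomially small probability, which $\log|\cdot-x|$ does not see. A separate, smaller issue: the containment $B(1-\eps,\eps)\subset\overline{B(x,\eps)}$ that your comparison rests on only holds when $\eps=1-x$; for $\eps>1-x$ the leftmost point $1-2\eps$ of $B(1-\eps,\eps)$ lies strictly outside $\overline{B(x,\eps)}$.

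The missing idea is the explicit conformal computation. The paper first reduces to the tangent case $\eps=1-x$ (by killing the Brownian motion on $\partial B(0,x+\eps)$ instead of $\partial{\bf D}$, which can only increase the hitting probability, and then rescaling). In the tangent case it applies the M\"obius map $\phi(y)=1/(1-y)$, which sends $\partial{\bf D}$ and $\partial B(x,1-x)$ to two parallel vertical lines; equivalently, the harmonic function $h(w)=\re\frac{1}{1-w}$ equals $1/2$ on $\partial{\bf D}$ and $1/(2\eps)$ on the tangent circle, so optional stopping gives the \emph{exact} formula
$\pz(T_{B(x,\eps)}<T_{\partial{\bf D}})=(2\cos\theta/r-1)/(1/\eps-1)$ with $1-z=re^{i\theta}$.
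This produces the bound $4\eps/r$ with $r=|z-1|$. The remaining work is the case analysis: if $z$ is not within $\eps$ of $\partial{\bf D}$ then $|z-x|<|z-1|=r$ and we are done, while if $z$ is within $\eps$ of the boundary then either $|z-x|<4\eps$ (trivial) or Proposition~\ref{estunitdisk} gives $\le 4\delta/|z-x|\le 4\eps/|z-x|$ since now $\delta<\eps$. Your sketch recognizes that the internal tangency to $\partial{\bf D}$ is what makes the bound possible, but never produces the harmonic function (or, equivalently, the conformal map) that exploits it; that is the step you would need to supply.
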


\begin{proof}
Assume first that $\eps = 1-x$ so $B(x,\eps)$ is tangent to ${\bf D}$
and write $1-z = r \exp(i\theta)$.
Then the map $\phi(y) = 1/(1-y)$ sends both circles into vertical lines and by conformal
invariance it follows that
$$ \pz ( T_{B(x, \eps)} < T_{\partial {\bf D}} ) = (2\cos\theta / r - 1) / (1/\eps - 1). $$
If $\eps \le 1/2$ then the last expression is bounded above by $4 \eps / r$; if
$\eps > 1/2$ then the same bound holds automatically, since $r \le 2$ always.
We have $|z-x|<|z-1|=r$ unless $z$ is within $\eps$ of the unit circle.
If $|z-x|<4\eps$ there is nothing to prove. And otherwise the Brownian motion must travel distance
at least $|z-x|/2$ from $z$ in order to hit $B(x,\eps)$,
and we can apply Proposition~\ref{estunitdisk}.

The case when $\eps > 1-x$ can be reduced to the previous one,
by killing the Brownian motion when exiting $B(x+\eps)$ instead of the unit disk; 
the probability of hitting  $B(x,\eps)$ can only increase, and the circles are
now tangent; rescaling completes the proof.
\end{proof}

\subsection*{Estimates for the number of polar rectangles hit by a Brownian motion}

The main result of this section is Proposition \ref{exp_tail_for_la_domain}, which implies Corollary \ref{cor_tail_est}.

Consider the following setup: a bounded planar domain $D$ and a Brownian motion $W$ started at some point $x \in D$, conditioned to exit $D$ at some point $y \in \partial D$ and killed upon exiting. Inside $D$ we have a finite collection $\mcl Q$ of disjoint sets $Q_1, \ldots, Q_n$, all touching $\partial D$, having regular shapes, and not too close to each other or $x$ and $y$. Let $N$ be the number of sets that the Brownian motion visits before it is killed. We will prove that $N$ has exponential tails under certain assumptions on $D$, $x$, $y$, and $\mcl Q$. Lemma \ref{tiles} is a result for general domains, and the estimate is expressed in terms of a Riemann map $\phi:D\to\bf D$. Proposition \ref{exp_tail_for_la_domain} considers the special case where $D$ is a $\lambda$-domain (Definition \ref{def:lambdadomain}). The idea of the proof is to show that an \emph{unconditioned} Brownian motion $W$ has a uniformly positive probability of exiting $D$ every time it hits a set $Q_j$ (Lemma \ref{tiletile}), and deduce the wanted exponential decay for the conditioned Brownian motion $W$ by using that $\phi(x)$ and $\phi(y)$ are bounded away from the image of $\mcl Q$ under $\phi$.

Both in Lemma \ref{tiles} and Proposition \ref{exp_tail_for_la_domain} the set $\mcl Q$ will be a collection of polar rectangles satisfying the following property.

\begin{Definition}\label{def_nice_q}
A finite collection ${\cal Q} = \{Q_j\}$ of polar rectangles is
called {\bf nice} if:
\begin{itemize}
\item[(i)] The rectangles $Q_j$ are not too close or too far from the origin:
there exist constants $0 < \rho_1 < \rho_2$ such that $\rho_1 \le |z| \le \rho_2$
for all $j$ and all $z \in Q_j$.
\item[(ii)] The radial and angular size of each rectangle are of the same
order: there exist constants $C_1, C_2$ such that for all $j$,
\begin{equation}\label{drdtheta}
C_1 \cdot dr(Q_j) \le d\theta(Q_j) \le C_2 \cdot dr(Q_j).
\end{equation}
\item[(iii)] For all $j$, $d\theta(Q_j) \le \pi / 2$.
\item[(iv)] The rectangles $Q_j$ are disjoint and far from each other,
relative to their size: for all $i \ne j$,
$d(Q_i, Q_j) \ge dr(Q_i)$.
\end{itemize}
\end{Definition}

\begin{figure}[ht]
\begin{center}
\includegraphics[scale=1]{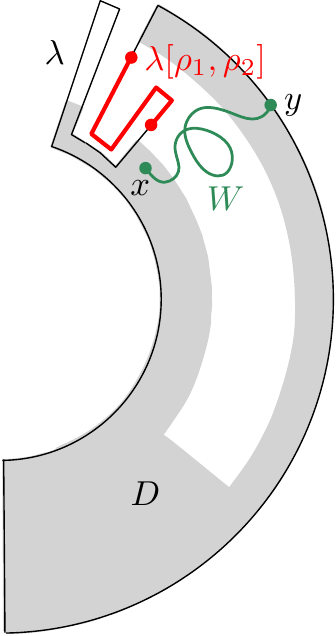}\qquad\qquad\qquad\qquad
\includegraphics[scale=1]{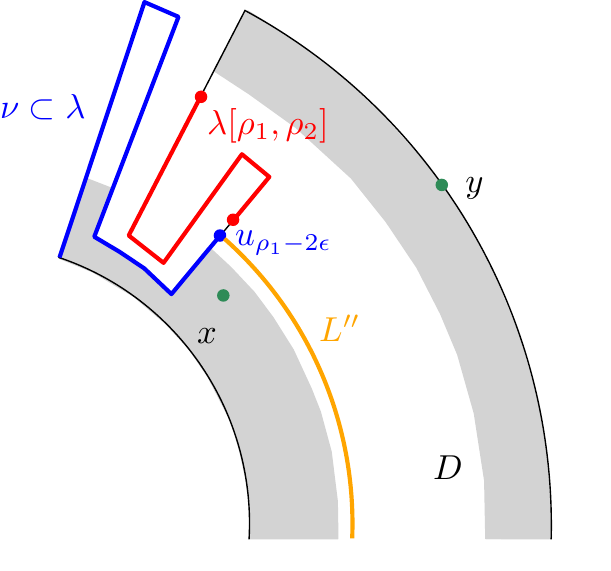}
\end{center}
\caption{Left: Illustration of the statement of Proposition \ref{exp_tail_for_la_domain}. A Brownian motion $W$ started from $x$ (contained in the gray domain) is conditioned to leave the $\lambda$-domain $D$ at $y$. A nice collection of polar rectangles $\mcl Q$ (not shown on the figure) touch the segment $\lambda[\rho_1,\rho_2]$ of $\lambda$ shown in red. Right: Illustration of step 5 in the proof of Proposition \ref{exp_tail_for_la_domain}.
}
\label{fig-prop-app}
\end{figure}

Before we can state the proposition we will introduce some notation. Let $\la$ be a curve as in the definition of a $\la$-domain with parameters $\eta_1,\eta_2$. For any $\rho_1, \rho_2  \in [\eta_1, \eta_2]$ satisfying $\rho_1<\rho_2$, we will let $\la[\rho_1,\rho_2]$ denote a connected segment of $\la$ defined as follows. For any $\rho \in [\eta_1, \eta_2]$, let
$$ \theta(\la, \rho) = \inf \{ \theta > 0 : \rho \exp(i \theta) \in \la \}, $$ 
and let $u_\rho = \rho \exp (i \theta(\la, \rho))$.
Then define $\la[\rho_1,\rho_2]$ to be the connected segment of $\la$
between $u_{\rho_1}$ and $u_{\rho_2}$.\\ 

\begin{Proposition}\label{exp_tail_for_la_domain}
Consider the following objects:
\begin{itemize}
\item $D$ is a $\la$-domain with parameters $\eta_1, \eta_2$,
\item ${\cal Q} = \{Q_j\}$ is a nice collection of polar rectangles, with parameters
$\rho_1 > \eta_1, \rho_2 < \eta_2, C_1, C_2$, such that each $Q_j$ 
touches $\la[\rho_1, \rho_2]$  along one of its line sides,
\item $x \in D$ is a point such that either $\arg(x) \le -\pi / 4$, or there exists a $\delta > 0$ such that $x$ can be connected to the positive real line by a curve contained in $\{ z\in D\,:\, |z|\not\in (\rho_1 - \delta,\rho_2 + \delta) \}$,
\item $y$ is any point in $\partial D - \la$.
\end{itemize}
Let $W$ be a Brownian motion started at $x$, conditioned to exit $D$ at $y$ and
killed upon exiting. Let $N$ be the number of rectangles in $\cal Q$ hit by $W$. Then $N$ has
exponential tails:
\begin{equation*}
\pxy(N \ge t) \le C \mu^t \quad \forall t \ge 1,
\end{equation*}
where $C>0$ and $\mu\in(0,1)$ are constants depending only on 
$\eta_1, \eta_2, \rho_1, \rho_2, C_1, C_2$ and $\delta$
(but do not depend on the shape of $\la$ or on the number and sizes of the
rectangles in $\cal Q$).
\end{Proposition}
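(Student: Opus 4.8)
The plan is to combine an escape estimate for an \emph{unconditioned} Brownian motion with a conformal comparison that transfers it to the conditioned motion; the hypotheses on $x$, $y$, $\rho_1$, $\rho_2$ will enter only in the second stage.

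First I would establish the unconditioned escape estimate, which is Lemma~\ref{tiletile}: an unconditioned Brownian motion, each time it enters a rectangle $Q_j$ of a nice collection, has probability at least $p_0=p_0(C_1,C_2)>0$ of exiting $D$ before it next enters $\bigcup_k Q_k$. This is where all the niceness conditions are used. By (ii), a point of $\overline{Q_j}$ lies within distance $\asymp dr(Q_j)$ of the line side of $Q_j$ contained in $\la\subset\partial D$; by (iv) the nearest other rectangle is at distance $\ge dr(Q_j)$; and (iii) keeps the shape of $Q_j$ non-degenerate. Since $\la$ is a connected curve joining $\partial B(\eta_1)$ to $\partial B(\eta_2)$ and $Q_j$ touches $\la[\rho_1,\rho_2]$, Beurling's projection theorem together with the elementary escape bounds of Propositions~\ref{estunitdisk}--\ref{from_mid_to_edge} show that from such a point the motion hits $\la$, hence $\partial D$, before leaving the ball $B(\cdot,K_0\,dr(Q_j))$ for a suitable $K_0=K_0(C_1,C_2)$, with probability bounded below, and on that event it has not re-entered $\bigcup_k Q_k$. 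The strong Markov property applied at the successive entrance times of $\bigcup_k Q_k$ then yields $\px(N\ge t)\le(1-p_0)^{t-1}$ for the unconditioned motion from any starting point.

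Next I would transfer this to the conditioned motion; this is the content of the general-domain Lemma~\ref{tiles}. Conditioning a Brownian motion in $D$ to exit at $y$ is the Doob $h$-transform with $h=P_D(\cdot,y)$ the Poisson kernel, and by conformal invariance of harmonic measure, for a Riemann map $\phi:D\to{\bf D}$ one has $h(a)/h(b)=P_{{\bf D}}(\phi(a),\phi(y))/P_{{\bf D}}(\phi(b),\phi(y))$ with the explicit kernel $P_{{\bf D}}(\zeta,\xi)=(1-|\zeta|^2)/|\zeta-\xi|^2$. Writing $\sigma_t$ for the time of the $t$th entrance of $W$ into $\bigcup_k Q_k$, which on $\{N\ge t\}$ is a stopping time strictly before $\tpd$, the $h$-transform identity gives
\[
\pxy(N\ge t)=\E_x\!\Big[\one{N\ge t}\,\frac{h(W_{\sigma_t})}{h(x)}\Big].
\]
Rather than bounding $h(W_{\sigma_t})$ by its supremum over $\bigcup_k Q_k$ (which would make the constant depend on $\mathcal Q$), one iterates at the successive entrance times using the strong Markov property of the conditioned motion; this reduces everything to showing that from any $w\in\bigcup_k Q_k$ the \emph{conditioned} motion exits $D$ before re-entering $\bigcup_k Q_k$ with probability at least $q_0>0$ depending only on the parameters. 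That last estimate should be proved by comparison with the unconditioned motion via the $h$-transform identity together with the Poisson-kernel bound $P_{{\bf D}}(\phi(w),\phi(y))\le(1-|\phi(w)|^2)/\varepsilon^2$, which is available because $\phi(y)$ is at a definite distance $\varepsilon$ from $\phi(\bigcup_kQ_k)$: concretely, the conditioned motion has a strong inward drift of order $(1-|\phi(\cdot)|)^{-1}$ wherever $\phi$ places it near $\partial{\bf D}$ and away from $\phi(y)$ — that is, exactly on the rectangles — so it is pushed to a macroscopic distance from $\partial{\bf D}$, past the rectangles, with probability bounded below, after which a bounded-domain argument together with Lemma~\ref{tiletile} handles the boundedly many rectangles of diameter comparable to $\operatorname{diam}D$. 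Given the estimate, the strong Markov property yields $\pxy(N\ge t)\le(1-q_0)^{t-1}$.

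It then remains to check, for a $\la$-domain, that $\phi(x)$ and $\phi(y)$ are at distance bounded below from $\phi(\mathcal Q)$, uniformly in the shape of $\la$ and in $\mathcal Q$. The strict inequalities $\rho_1>\eta_1$ and $\rho_2<\eta_2$ force $\la[\rho_1,\rho_2]$, hence every $Q_j$, to be separated from the two ends of $\la$ and from $\partial B(\eta_1)\cup\partial B(\eta_2)$ by the annular collars $\{\eta_1<|z|<\rho_1\}\cap D$ and $\{\rho_2<|z|<\eta_2\}\cap D$, each of modulus bounded below; since $y\in\partial D-\la$ lies on $\sigma$ or on the connecting arcs, i.e.\ beyond these collars, an extremal-length estimate uniform in $\la$ gives the separation of $\phi(y)$. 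For $\phi(x)$, the hypothesis on $x$ — either $\arg(x)\le-\pi/4$, so that $x$ lies in the wide region near $\sigma$, or $x$ joins the positive real axis by a path avoiding $\{|z|\in(\rho_1-\delta,\rho_2+\delta)\}$ — supplies an annular separation of $x$ from $\la[\rho_1,\rho_2]$ of modulus bounded below in terms of $\delta$, which gives the separation of $\phi(x)$. Assembling the three steps proves the proposition. The main obstacle is the second step: the estimate must be uniform in the number and sizes of the rectangles and in the arbitrary curve $\la$, which rules out any crude supremum bound on the Poisson kernel and forces one to exploit the precise boundary decay of $1-|\phi|$ against the fixed conformal separation of $\phi(x),\phi(y)$ from $\phi(\mathcal Q)$, and to treat the large rectangles as a separate finite family; the first step, by contrast, is a routine application of the estimates collected in the appendix.
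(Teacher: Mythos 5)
Your three-step decomposition — an unconditioned escape estimate (Lemma~\ref{tiletile}), a transfer to the conditioned motion, and a uniform separation of $\phi(x),\phi(y)$ from $\phi(\mathcal Q)$ — matches the paper's architecture, and your steps (1) and (3) are essentially the paper's Lemma~\ref{tiletile} and Steps 1--6 of the proof of the proposition. But your middle step diverges from the paper's Lemma~\ref{tiles}, and that is where there is a genuine gap.

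The paper never proves, and never needs, the estimate you reduce to: that from an arbitrary $w\in\bigcup_k Q_k$ the \emph{conditioned} motion exits $D$ before re-entering $\bigcup_k Q_k$ with probability $\ge q_0$ uniformly. Instead, Lemma~\ref{tiles} writes $\pxy(N\ge t)=\px(N\ge t,\,\wtd\in I)/\px(\wtd\in I)$ for a shrinking arc $I$ around $y$, iterates the \emph{unconditioned} escape estimate $\pz(T_{Q-Q_j}<\tpd)\le 1-c$ of Lemma~\ref{tiletile} at the successive entrance times to get the factor $(1-c)^{t-1}$, and then absorbs the conditioning into a prefactor consisting of three harmonic-measure/Poisson-kernel bounds evaluated only at $x$, at $y$, and once at $\sup_{z\in Q}\pz(\wtd\in I)$. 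Crucially, the exponent $\mu=1-c$ comes entirely from the unconditioned motion; the conditioning affects only the multiplicative constant $C=4/(\mu\, d(\phi(Q),\phi(y))^2\,d(\phi(Q),\phi(x)))$, which the separation step then makes uniform in $\mathcal Q$ and $\la$.

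Your proposed conditioned escape estimate, by contrast, is not a routine consequence of the ingredients you cite, and your sketch of it does not close. The $h$-transform identity gives $\pxy_w(T_{Q-Q_j}<\tpd)=\E_w[\one{T_{Q-Q_j}<\tpd}\,h(W_{T_{Q-Q_j}})]/h(w)$ with $h$ the Poisson kernel at $y$; for $w$ deep in the corner of a tiny $Q_j$, $h(w)\asymp(1-|\phi(w)|)$ is tiny, while $h(W_{T_{Q-Q_j}})$ can be comparatively large, so the naive bound gives a ratio that is not manifestly $<1$. Your "inward drift past the rectangles" heuristic is plausible but would need a careful quantitative argument, and the follow-up claim that a "bounded-domain argument together with Lemma~\ref{tiletile} handles the boundedly many rectangles of diameter comparable to $\operatorname{diam}D$" does not address what the conditioned motion does after it has been pushed to macroscopic distance: there can be arbitrarily many small rectangles near $\partial D$ that it could still revisit. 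Also note that the applicable starting-point bound in Lemma~\ref{tiles} degenerates precisely when the start lies in $\bigcup_k Q_k$ (the factor $d(\phi(Q),\phi(\cdot))$ vanishes), so you cannot re-use it to get $q_0$. Finally, the worry that motivated your deviation — that bounding $h$ by its sup over $\bigcup_k Q_k$ would make the constant depend on $\mathcal Q$ — is unfounded: the paper's bound $\sup_{z\in Q}\pz(\wtd\in I)\le dy/(2\pi\,d(\phi(Q),\phi(I))^2)$ is uniform in $\mathcal Q$ once $d(\phi(Q),\phi(y))$ is bounded below, which is exactly what the separation step delivers. In short, the architecture of your argument is right, but replacing the unconditioned iteration with a conditioned one introduces a nontrivial estimate that you have not proved, whereas the paper's route makes that estimate unnecessary.
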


\begin{figure}[ht]
\begin{center}
\includegraphics[scale=1]{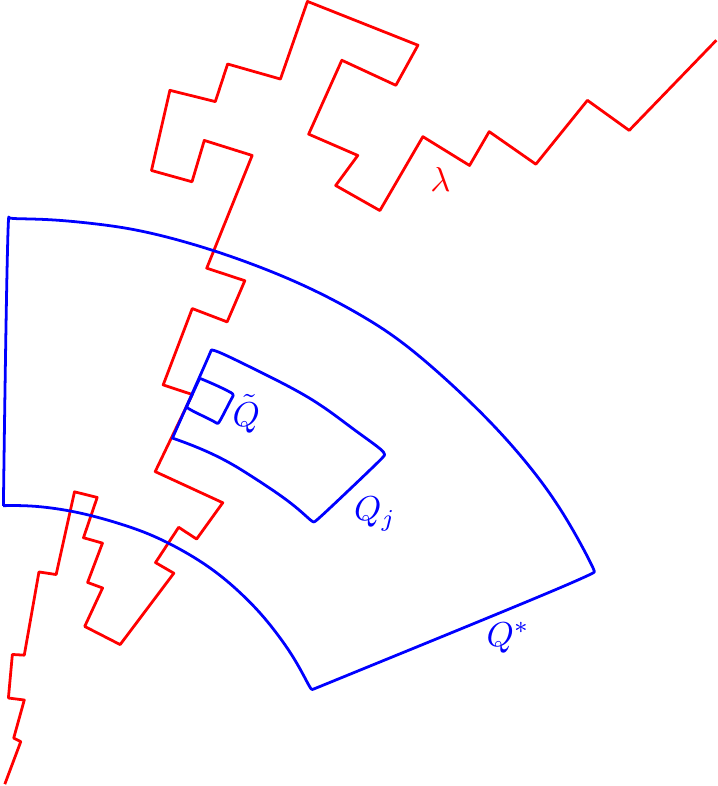}
\end{center}
\caption{
The three polar rectangles in the proof of Lemma~\ref{tiletile}:
$Q_j$, $\tQ$ and $Q^*$. The rectangle $Q_j$ touches $\la$ along its left side.
}
\label{qqq}
\end{figure}

\noi For a nice collection of polar rectangles that touch a curve
(e.g. the boundary of a domain), we can show that a Brownian motion started inside one
of the rectangles is likely to hit the curve before hitting
another rectangle.
\begin{Lemma}\label{tiletile}
Let $\la$ be a connected curve and let ${\cal Q} = \{Q_j\}$ be 
a nice collection of polar rectangles, so that each $Q_j$ touches $\la$
along one of its line sides.
Assume $\la$ is large with respect to the rectangles, so its diameter
$ |\la| \ge 2 \, dr(Q_j) $ for all $j$.
Let $Q = \cup Q_j$. Then for all $j$ and all $z \in Q_j$,
\begin{equation*}
\pz ( T_\la < T_{Q - Q_j} ) \ge c
\end{equation*}
for some constant $c>0$ that depends only on $\rho_1, \rho_2, C_1$ and $C_2$.
\end{Lemma}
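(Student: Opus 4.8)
The plan is to reduce the estimate to a statement about a single polar rectangle $Q_j$ together with a disk that comfortably contains all the other rectangles, and then to invoke the Beurling-type estimates (Beurling's theorem and Propositions \ref{estunitdisk}--\ref{from_mid_to_edge}) that were assembled in the appendix. Fix $j$ and $z\in Q_j$. First I would rescale so that $dr(Q_j)=1$; by niceness (ii)--(iii) the rectangle $Q_j$ then has both radial and angular size of order $1$ (bounded above and below in terms of $C_1,C_2$), and lives at radius between $\rho_1$ and $\rho_2$ from the origin. Let $Q^*$ be a slightly enlarged polar rectangle, still of size comparable to $1$, chosen so that $Q_j\subset Q^*$ and so that the line side of $Q^*$ lying along $\la$ contains the corresponding line side of $Q_j$ in its relative interior; this is the rectangle called $Q^*$ in Figure \ref{qqq}. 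By niceness (iv) every other rectangle $Q_i$ ($i\neq j$) satisfies $d(Q_i,Q_j)\ge dr(Q_i)$, and after rescaling one checks that $Q-Q_j$ is at distance bounded below by a universal constant (depending on $C_1,C_2$) from $Q_j$, hence from the relevant portion of $\partial Q^*$ as well.

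The key geometric point is that the Brownian motion started at $z\in Q_j$ must, in order to reach $Q-Q_j$ without first hitting $\la$, exit $Q^*$ through the part of $\partial Q^*$ that does \emph{not} lie on $\la$. So it suffices to bound below the probability that $W$, started at $z\in Q_j$, hits the $\la$-side of $\partial Q^*$ before hitting $\partial Q^*\setminus\la$. Here is where I expect the main work to lie, and where I would use the appendix estimates. Map $Q^*$ (or a disk containing it) conformally in a controlled way, or more simply argue directly: the side of $Q^*$ on $\la$ is a full side of a rectangle of size $\asymp 1$, so from a typical point of $Q_j$ there is a uniformly positive chance of hitting it before leaving $Q^*$; the delicate case is when $z$ lies very close to $\partial Q^*\setminus\la$, i.e.\ near the boundary of $Q_j$ away from $\la$. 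But $Q_j$ itself is a rectangle of size $\asymp 1$ that touches $\la$ along a full line side, so $z$ is never too close to the ``bad'' part of $\partial Q^*$ relative to the size of $Q^*$; more precisely $d(z,\partial Q^*\setminus\la)$ is bounded below by a constant depending only on $C_1,C_2$ (the point $z$ is separated from $\partial Q^*\setminus\la$ by the ``collar'' $Q^*\setminus Q_j$). With that separation in hand, Proposition \ref{estunitdisk} (escape from a boundary, applied after enclosing the configuration in a disk of radius $\asymp 1$) and, if needed, Beurling's theorem to handle the irregular curve $\la$, give that $\pz(T_\la\ge T_{\partial Q^*\setminus\la})$ is bounded \emph{away from $1$} by a constant depending only on $\rho_1,\rho_2,C_1,C_2$. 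Equivalently $\pz(T_\la<T_{\partial Q^*\setminus\la})\ge c>0$, and since $\{T_\la<T_{\partial Q^*\setminus\la}\}\subset\{T_\la<T_{Q-Q_j}\}$ (the motion cannot reach $Q-Q_j$ without first exiting $Q^*$ on its non-$\la$ side), the lemma follows. The hypothesis $|\la|\ge 2\,dr(Q_j)$ is used to guarantee that the $\la$-side of $Q^*$ is actually a genuine piece of $\la$ of size $\asymp dr(Q_j)$, so that the curve $\la$ is ``large enough to catch'' the Brownian motion with uniformly positive probability.

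The main obstacle is exactly the uniformity: $\la$ is an arbitrary connected curve, so one cannot assume any smoothness or regularity of the piece of $\la$ along $\partial Q^*$. This is handled by Beurling's theorem, which says a Brownian motion is at least as likely to hit a connected set as to hit its radial projection; projecting the relevant arc of $\la$ onto an annulus $\{\rho\le|w|\le\rho'\}$ with $\rho'-\rho\asymp 1$ reduces the problem to hitting a full circular arc, for which the escape probability is controlled by Propositions \ref{estunitdisk} and \ref{estBMsqrt} with constants depending only on $\rho_1,\rho_2,C_1,C_2$. The remaining steps — enclosing everything in a disk of the right size, bookkeeping the rescaling, and checking the separation distances from niceness (iv) — are routine.
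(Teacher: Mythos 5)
There is a genuine gap, and it comes from a misreading of the hypothesis. You write that ``$Q_j$ itself is a rectangle of size $\asymp 1$ that touches $\la$ along a full line side,'' and you build $Q^*$ so that ``the line side of $Q^*$ lying along $\la$'' contains the corresponding side of $Q_j$. But the phrase ``touches $\la$ along one of its line sides'' only means that $\la$ and a line side of $\overline{Q_j}$ share \emph{some} point; it does not mean $\la$ contains a full side of $Q_j$. (Look at how ${\cal Q}(D,\rho_1,\rho_2,n)$ is constructed: each $Q_j$ is rotated counterclockwise \emph{until it first touches} $\la$, so the contact set is typically a single point.) Consequently there is no side of $Q^*$ that lies on $\la$; $\la$ just enters the interior of $Q^*$ near that contact point, and the event ``$W$ hits the $\la$-side of $\partial Q^*$ before the rest of $\partial Q^*$'' is not the same as ``$W$ hits $\la$ before leaving $Q^*$.''

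This misreading is what lets you skip the one step where the real work is. From an arbitrary $z\in Q_j$ one cannot run a Beurling argument directly: $d(z,\la)$ is only bounded by the diameter of $Q_j$, which is of order $(1+\rho_2 C_2)\,dr(Q_j)$ and can exceed $dr(Q_j)$ (take $z$ to be the corner of $Q_j$ opposite the contact with $\la$). To rule out hitting $Q-Q_j$, one wants to work inside $B(z,dr(Q_j))$ (since $d(Q_j,Q-Q_j)\ge dr(Q_j)$), but then the relevant radial projection of $\la$ lives in $\bigl(d(z,\la),\,dr(Q_j)\bigr)$, which is empty precisely when $d(z,\la)\ge dr(Q_j)$. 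So Beurling gives nothing uniformly in $z\in Q_j$, and Proposition~\ref{estunitdisk} --- an \emph{upper} bound on escape probabilities --- does not supply the missing lower bound on hitting $\la$ either.

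The paper fills this gap with an intermediate sub-rectangle. It constructs $\tQ\subset Q_j$ hugging the touching side of $Q_j$, for which one can verify $d(w,\la)\le\frac{3}{4}\,dr(Q_j)$ for every $w\in\tQ$; Beurling inside $B(w,dr(Q_j))$ then yields a uniform hitting probability $c_1>0$. Separately (their Step~3, carried out in the $(\log|w|,\arg w)$ coordinates), one shows that from any $z\in Q_j$ the Brownian motion reaches $\tQ$ before leaving the padded rectangle $Q^*$ with probability at least $c_2>0$; the lemma then follows from the strong Markov property with $c=c_1 c_2$. Your sketch omits this second ingredient entirely, and it is not ``routine'' --- it is the bulk of the argument, and without it the Beurling step cannot be applied from a generic point of $Q_j$.
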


\begin{proof}
The proof involves several steps. For each rectangle $Q_j$ we construct
a smaller rectangle $\tQ$ and a larger one $Q^*$. We show that a Brownian motion started
inside $Q_j$ is likely to hit the small rectangle, and once that happens it is likely to get killed before exiting the large rectangle. \\

\noi {\bf Step 1.} Let $Q_j = q(r, \theta, dr, d\theta)$.
One of the two line sides of $Q_j$ must touch $\la$;
assume without loss of generality that this is the side
$L = \{ r'\exp(i(\theta+d\theta)) \, | \, r \le r' < r + dr\}$ (Figure~\ref{qqq}).
Let $\eps>0$ satisfy $\eps<(12 \rho_1 C_2)^{-1}\wedge (1/2)$ and consider the sub-rectangle
$$ \tQ = \{ r'\exp(i\theta') \, | 
\, r + \frac 13 \cdot dr \le r' < r + \frac 23 \cdot dr, \, 
\theta + (1-\eps) \cdot d\theta \le \theta' < \theta + d\theta \}. $$
Then $\tQ$ touches the edge $L$ and is shrunk by factors of
$1/3$ and $\eps$ with respect to $Q_j$. Since $L$
touches $\la$, all points in $\tQ$ are close to $\la$.
Indeed, let $z \in \tQ$ and let $z_L$ the unique point on $L$
with $|z| = |z_L|$. Then $d(z, \la) \le d(z_L, \la) + d(z, z_L)$, so
\begin{equation}\label{eqtt1}
d(z, \la) \le \frac 23 \cdot dr + \eps \cdot \rho_2 \cdot d\theta
\le \frac 34 \cdot dr
\quad \forall z \in \tQ.
\end{equation}
Since $\cal Q$ is nice, we also have
\begin{equation}\label{eqtt2}
d(z, Q-Q_j) \ge dr \quad \forall z \in \tQ.
\end{equation}

Now we use Beurling's theorem for Brownian motion started at $z$ and killed upon
exiting $B(z, dr)$. The radial projection of $\la$ (with center $z$) is
some interval $\Pi(\la)$. From \eqref{eqtt1}, it must contain some
number below $(3/4) \cdot dr$; since $|\la|>2 \, dr$, it must also contain some number
above $dr$. Hence $(3/4 \cdot dr, dr)  \subset \Pi(\la)$ so Beurling's theorem guarantees
that
\begin{equation}\label{eqc1}
\pz ( T_\la < T_{Q-Q_j} ) \ge c_1 > 0 \quad\forall z \in \tQ
\end{equation}
where $c_1$ is, by scale invariance, the probability that a Brownian motion started at the origin hits
the segment $(3/4,1)$ on the $x$-axis before hitting the unit circle. \\

\noi {\bf Step 2.}
Now fix $\delta \in (0,1)$ and construct a polar rectangle $Q^*$ by padding
$Q_j$ on all sides with bands of relative size $\delta$ (Figure~\ref{qqq}):
\begin{equation*}
Q^* = \{ r'\exp(i\theta') \, | 
\, r- \delta \cdot dr \le r' < r + (1+\delta) \cdot dr, \, 
\theta - \delta \cdot d\theta \le \theta' <\theta + (1+\delta) \cdot d\theta \}.
\end{equation*}
Clearly $Q^*$ contains $Q_j$. We claim that if $\delta$ is small enough, then $Q^*$ will not intersect any of the other polar rectangles. Since the rectangles are at least $dr$ apart, it will be enough to show that any point $u \in Q^*$ is within $dr$ of $Q_j$. Indeed, let $v$ be the point of $\partial Q_j$ that is nearest $u$. The points $u$ and $v$ can be connected by a path made of an arc of length at most $\delta \cdot (\rho_2+\delta dr) \cdot d\theta\leq 2\delta\rho_2 d\theta$ (where the inequality holds since $\delta<1<\rho_2/dr$), followed by a straight line of length at most $\delta \cdot dr$. Hence it is enough to choose $\delta = 1 / (1+2C_2 \rho_2)$. \\

\noi {\bf Step 3.}
Hence a Brownian motion started at some $z \in Q_j$ must exit $Q^*$ before hitting
$Q-Q_j$. We claim a Brownian motion started at $z$ is likely to hit $\tQ$ before exiting
$Q^*$:
\begin{equation}\label{eqc2}
\pz ( T_{\tQ} < T_{\partial Q^*} ) \ge c_2 > 0 \quad\forall z \in Q_j.
\end{equation}
This is clear intuitively; the proof involves a series of simple but tedious
calculations. We use the polar representation of Brownian motion; 
we need to estimate the probability 
that a Brownian motion started at $(\log |z|, \arg z)$ hits the (cartesian) rectangle
$$[\log(r+(1/3)dr), \log(r+(2/3)dr)] \times 
[\theta + (1-\eps) \cdot d\theta, \theta + d\theta]$$
before exiting the rectangle
$$[\log(r-\delta \cdot dr), \log(r+(1+\delta) \cdot dr)] \times 
[\theta - \delta \cdot d\theta, \theta + (1+\delta) \cdot d\theta].$$
Now we use translation invariance to subtract $\log r$ and $\theta$,
and scale invariance to factor out $d\theta$. The two
rectangles transform to
$$[\log(1+(1/3)dr/r) / d\theta, \log(1+(2/3)dr/r) / d\theta] \times [1-\eps,1]$$
and
$$[\log(1-\delta \cdot dr/r) / d\theta, \log(1+(1+\delta) \cdot dr/r) / d\theta] \times 
[-\delta, 1+\delta] $$
and the starting point $z$ transforms to a point 
$z' \in [0, \log(1+dr/r) / d\theta] \times [0,1]$.
From the inequality
$$ t / (x+t) \le \log(x+t) - \log x \le t / x \quad \forall \, x, t > 0$$
it follows that the first rectangle has side length at least
$((1/3) (dr/r) / (1+dr/r)) / d\theta$. Since the collection is
nice, we have $dr / d\theta \ge 1/C_2$ and $r + dr \le \rho_2$,
so the side length is bounded below by $\delta_1 = 1/(3C_2 \rho_2)$.

Let $s = \log(1+dr/r) / d\theta$. Similarly we obtain
$$ 1 / (C_2 \rho_2) \le s \le 1 / (C_1 \rho_1), $$
and finally, the second rectangle must contain the rectangle
$$ [-\delta_2, s + \delta_2] \times [-\delta, 1+\delta] $$
for any $\delta_2<\delta\rho_1/(2C_2\rho_2^2)$.

We have reduced the original problem to the following: consider the rectangle
$R_2 = [0,s] \times [0,1]$ with $ 1 / (C_2 \rho_2) \le s \le 1 / (C_1 \rho_1) $.
Let $R_1$ be any rectangle of size $\delta_1 \times \eps$ contained inside $R_2$,
and let $R_3$ be the rectangle obtained by padding $R_2$ with bands of size
$\delta_2$ and $\delta$. The rectangles $R_2$ and $R_3$ have fixed or bounded size, and the diameter of $R$ is bounded from below, so
$$ \inf \pz ( T_{R_1} < T_{\partial R_3} ) = c_2 > 0 $$
where the infimum is taken over all $z \in R_2$, all $s$ and all possible positions
of $R_1$. This concludes the proof of~(\ref{eqc2}). Note that $c_2$ only depends
on $\rho_1, \rho_2, C_1$ and $C_2$. \\

\noi {\bf Step 4.}
The strong Markov property and equations~(\ref{eqc1}) and~(\ref{eqc2}) now yield
$$ \pz ( T_\la < T_{Q-Q_j} ) \ge c_2 c_1 \quad\forall z \in Q_j $$
and the proof is complete.
\end{proof}

We can now prove a generic form of the exponential bound stated at the beginning of this section, expressed in terms of a Riemann map.
\begin{Lemma}\label{tiles}
Let $D$ be a simply connected domain and let $\phi$ be a Riemann 
map that sends $D$ into the unit disk.
Let $W$ be a Brownian motion started at $x \in D$, conditioned to exit $D$ at $y \in \partial D$ and
killed upon exiting.

Let ${\cal Q} = \{Q_j\}$ be a nice collection of polar rectangles, so that each $Q_j$ touches $\partial D$ along one of its line sides, and the diameter $|D| \ge 2\,dr(Q_j)$ for all $j$.

Let $N$ be the number of rectangles in $\cal Q$ hit by $W$. Then $N$ has
exponential tails:
\begin{equation*}
\pxy(N \ge t) \le C \mu^t \quad \forall t \ge 1
\end{equation*}
where $C = 4 / (\mu d(\phi(Q), \phi(y))^2 \cdot d(\phi(Q), \phi(x)))$
and $\mu<1$ is a constant depending only on $\rho_1, \rho_2, C_1$ and $C_2$.
\end{Lemma}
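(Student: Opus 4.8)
The plan is to reduce the bound for the conditioned motion to the analogous bound for the \emph{unconditioned} motion — where Lemma~\ref{tiletile} already provides a geometric tail — and to pay for the conditioning with two boundary factors that are estimated via the Riemann map $\phi$. \textbf{Step 1: the unconditioned geometric bound.} Since $D$ is simply connected, $\partial D$ is connected, and $|\partial D|\ge|D|\ge 2\,dr(Q_j)$ for all $j$. Write $Q=\bigcup_j Q_j$ and, for $z\in Q$, let $Q(z)$ be the rectangle containing $z$. Lemma~\ref{tiletile}, applied with the connected set $\la=\partial D$, gives $\pz(\tpd<T_{Q-Q(z)})\ge c$ for every $z\in Q$, with $c$ depending only on $\rho_1,\rho_2,C_1,C_2$: an unconditioned Brownian motion started in a rectangle exits $D$ before meeting any other rectangle with probability at least $c$. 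Set $\tau_0=T_Q$ and recursively $\tau_k=\inf\{t>\tau_{k-1}:W(t)\in Q-Q(W(\tau_{k-1}))\}$; then $\{N\ge k\}=\{\tau_{k-1}<\tpd\}$, and applying the strong Markov property at $\tau_0,\tau_1,\dots$ together with the preceding estimate yields
$$\px(N\ge t)\le\px(T_Q<\tpd)\,(1-c)^{t-1}\qquad(t\ge1).$$

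\textbf{Step 2: transfer to the disk and strip off the conditioning.} Hitting probabilities are conformally invariant, so after pushing forward by $\phi$ (and noting $N$ is unchanged pathwise) Step~1 reads $\bP_{\phi(x)}(N\ge t)\le\bP_{\phi(x)}(T_{\phi(Q)}<T_{\partial{\bf D}})\,(1-c)^{t-1}$ for the Brownian motion in ${\bf D}$. Conformal invariance also maps $W$ under $\pxy$ to a Brownian motion in ${\bf D}$ started at $\phi(x)$ and conditioned to exit at $\phi(y)\in\partial{\bf D}$; the latter is the Doob $h$-transform of $\bP_{\phi(x)}$ by the disk Poisson kernel $h(v)=(1-|v|^2)/|v-\phi(y)|^2$. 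The optional-stopping identity for this $h$-transform, applied at $\tau_{t-1}$ — which lies strictly before $T_{\partial{\bf D}}$ and puts the path in $\phi(Q)$ on $\{N\ge t\}$ — gives
$$\pxy(N\ge t)=\frac{1}{h(\phi(x))}\,\E_{\phi(x)}\!\big[\one{N\ge t}\,h(W_{\tau_{t-1}})\big]\le\frac{\sup_{\phi(Q)}h}{h(\phi(x))}\,\bP_{\phi(x)}(T_{\phi(Q)}<T_{\partial{\bf D}})\,(1-c)^{t-1}.$$

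\textbf{Step 3: the three disk estimates.} For $v\in\phi(Q)$ one has $1-|v|^2\le1$ and $|v-\phi(y)|\ge d(\phi(Q),\phi(y))$, so $\sup_{\phi(Q)}h\le d(\phi(Q),\phi(y))^{-2}$. Reaching $\phi(Q)$ from $\phi(x)$ before exiting ${\bf D}$ forces the motion to travel a distance $d(\phi(Q),\phi(x))$ first, so by Proposition~\ref{estunitdisk}, $\bP_{\phi(x)}(T_{\phi(Q)}<T_{\partial{\bf D}})\le 2(1-|\phi(x)|)/d(\phi(Q),\phi(x))$; and $h(\phi(x))\ge(1-|\phi(x)|)/4$ since $|\phi(x)-\phi(y)|\le2$. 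Combining the three bounds with Step~2 gives $\pxy(N\ge t)\le C\mu^t$ with $\mu=1-c<1$ depending only on $\rho_1,\rho_2,C_1,C_2$ and $C$ an absolute multiple of $(\mu\,d(\phi(Q),\phi(y))^2\,d(\phi(Q),\phi(x)))^{-1}$, which is the asserted bound.

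The delicate point is the $h$-transform step: one must be precise that $\pxy$, defined as the limit of Brownian motion conditioned to exit $D$ in shrinking arcs around $y$ — equivalently the $h$-process with $h=P_D(\cdot,y)$ — obeys $\E_x^y[F]=h(x)^{-1}\E_x[F\,h(W_\sigma)]$ for bounded $F\in\mcl F_\sigma$ with $\sigma<\tpd$ on $\{F\ne0\}$, and that $\tau_{t-1}$ is such a $\sigma$ on $\{N\ge t\}$. This is standard, but it is the only genuinely probabilistic ingredient beyond Lemma~\ref{tiletile}, and it is precisely where the two geometric distances $d(\phi(Q),\phi(x))$ and $d(\phi(Q),\phi(y))$ are forced to appear. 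Everything else is conformal invariance together with the elementary escape estimate Proposition~\ref{estunitdisk}.
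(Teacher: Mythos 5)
Your proof is correct and follows essentially the same route as the paper's: use Lemma~\ref{tiletile} together with the strong Markov property at the successive rectangle-hitting times to get the unconditioned geometric decay, bound $\px(N\ge 1)$ by Proposition~\ref{estunitdisk}, and then pay for the conditioning on the exit point $y$ by a ratio of Poisson kernels in the disk. The only difference is presentational: you package the conditioning step as the Doob $h$-transform identity applied at $\tau_{t-1}$, while the paper reaches the same ratio by conditioning on exit through a shrinking boundary arc $I\ni y$ and letting $|I|\to 0$; and your factor of $8$ in place of the stated $4$ comes only from using $|\phi(x)-\phi(y)|\le 2$ crudely rather than $|\phi(x)-\phi(y)|\le 1+|\phi(x)|$, which sharpens $h(\phi(x))\ge(1-|\phi(x)|)/2$ and recovers the exact constant.
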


\begin{proof}[Proof of Lemma \ref{tiles}]
We first consider a free Brownian motion started at $x$ and then we condition on exiting
at $y$. Recall that the Poisson kernel on the disk is
$$K(u,v) = (1-|u|^2) / |u-v|^2. $$
Consider a small boundary interval $I$ around $y$, 
such that its image under $\phi$ is an arc of length $dy$.
Clearly $K(\phi(x), v) \ge (1 - |\phi(x)|)/2$ for all $v$, so the probability that a free Brownian motion started at $x$
exits on $I$ satisfies
\begin{equation}\label{pxy_bottom}
\px (\wtd \in I) \ge (1-|\phi(x)|) \cdot dy/(4\pi).
\end{equation}

Let $Q = \cup Q_j$.
From Lemma~\ref{tiletile}, once the Brownian motion hits a rectangle, it is likely to hit 
the boundary before hitting another rectangle:
\begin{equation*}
\pz ( \tpd < T_{Q - Q_j} ) \ge c, \quad \forall j, \forall z \in Q_j.
\end{equation*}
Hence if $N$ is the number of rectangles hit before exiting $D$,
the strong Markov property gives the following bound:
\begin{equation*}
\px (N \ge t, \wtd \in I) \le \px(N \ge 1) \cdot (1-c)^{t-1} \cdot \sup_{z \in Q} \pz(\wtd \in I).
\end{equation*}
From Proposition~\ref{estunitdisk}, the first factor is bounded by
\begin{equation*}
\px(N \ge 1) \le 2 (1-|\phi(x)|) / d(\phi(Q), \phi(x))
\end{equation*}
while the last factor is easily bounded using the Poisson kernel:
\begin{equation*}
\sup_{z \in Q} \pz(\wtd \in I) \le dy / 2\pi d(\phi(Q),\phi( I))^2.
\end{equation*}
Hence
\begin{equation*}
\px (N \ge t \, | \, \wtd \in I) \le 4 \cdot (1-c)^{t-1} / (d(\phi(Q), \phi(I))^2 \cdot d(\phi(Q), \phi(x))),
\end{equation*} 
and letting $I$ converge to $\{ y \}$ completes the proof. Note that the proof works even if the rectangles are not subsets of $D$ (as long as they touch its boundary).
\end{proof}

The bound in Lemma~\ref{tiles} has certain nice features: the exponent $\mu$ depends only on the geometry of the collection of rectangles, but not on their number or size, or on the shape of the domain. However, the other constant $C$ depends on the Riemann map $\phi$, about which not much is known {\it a priori}. We will obtain an estimate for $\lambda$-domains in Proposition \ref{exp_tail_for_la_domain} by controlling the map $\phi$ and showing that $x$ and $y$ both map far enough from the elements of $\mcl Q$, see Figure \ref{fig-prop-app} for an illustration.

\begin{proof}[Proof of Proposition \ref{exp_tail_for_la_domain}]
Let $s = (1/2) (\eta_1 + \eta_2) \exp(-i \pi / 4)$,
 and let $\phi$ be a Riemann map that sends $D$ into the unit disk
and $s$ into the origin. We want to show that $\phi$ maps $x$ and $y$
far from where it maps the polar rectangles $Q_j$. The proof proceeds in several steps.\\

\noi {\bf Step 1.} We will now prove that for any $\rho,\rho'>0$ such that $\eta_1<\rho<\rho'<\eta_2$ there is a constant $C(\rho,\rho')$ such that the arc length of $\phi(\larr)$ is between $C(\rho,\rho')$ and $\pi$. We will first prove the upper bound. By conformal invariance,
the length of $\phi(\larr)$ is equal to $2 \pi$ times the probability 
of the event that a Brownian motion started at $s$ exits $D$ through $\larr$. In order to hit $\larr$ before
exiting, the Brownian motion started at $s$ has to hit the segment $[\eta_1, \eta_2]$ on the
$x$-axis before hitting $\sigma$. This event has probability 1/2, so
all $\phi(\larr)$ have arc length at most $\pi$.

To obtain a lower bound, note that for a Brownian motion started at $s$ to exit through $\larr$, it is enough if it exits the south-east quadrant through $[\rho,\rho']$ on the $x$ axis, and then moves in a counterclockwise direction inside the annulus $\{\rho \le |z| \le \rho'\}$ until it hits the negative $x$ axis. Hence if we take $C(\rho,\rho')$ to be $2\pi$ times the probability of the latter event, then $\phi(\larr)$ has arc length at least $C(\rho,\rho')$. We could obtain more precise estimates for $C(\rho,\rho')$, but all we will need is that it may depend on $\eta_1$ and $\eta_2$, but not on $\la$. \\

\noi {\bf Step 2.} If Brownian motion started at $z$ is likely to exit $D$ through $\larr$, then $\phi(z)$ must
be close to $\phi(\larr)$. This is clear intuitively, but we need a concrete estimate.
Let $S$ be any arc on the unit circle and
let $p(S)$ be the probability that a Brownian motion started at $u$ exits the unit disk {\bf outside} $S$.
Since the Poisson kernel satisfies
$K(u,v) = (1-|u|^2) / |u-v|^2 \ge (1 - |u|)/2$, we have
$$ p(S) \ge (1-|u|) \cdot l(S^c)/4\pi, $$
where $l(S^c)$ is the complement of $S$.
On the other hand, from Proposition~\ref{estunitdisk},
$$ 1-p(S) \le 2 (1-|u|) / d(u, S). $$
Hence $d(u,S) \le (8\pi / l(S^c)) \cdot p(S) / (1-p(S))$, and for $S=\phi ( \larr ) $ we obtain
\begin{equation}\label{plarr}
d(\phi(z), \phi(\larr)) \le 8 \pz ( \wtd \notin \larr) \, / \, \pz ( \wtd \in \larr).
\end{equation}
\\
\noi {\bf Step 3.} Fix some small $\eps > 0$ and let $L = \la[\rho_1 - \eps, \rho_2 + \eps]$. 
We show that all points of $\phi(Q_j)$ are close to $\phi(L)$ for all $j$. Let $z \in Q_j$,
and assume for now that $d\theta(Q_j) \le \pi / 4$, so the rectangle $Q_j$
remains far from the domain ``right'' boundary $\sigma$.
If a Brownian motion started at $z$ exits $D$ outside $L$, then it must either exit the annulus
$\{\rho_1 - \eps < |z| < \rho_2 + \eps \} $ (in which case it travels at least distance $\eps$)
or it must exit $D$ on $\sigma$ (in which case it travels at least distance $\eta_1 2^{-1/2}$).
Since $Q_j$ touches $\la[\rho_1, \rho_2] $,
the radial projection of $L$ with center $z$ contains the interval $( |Q_j|, \eps )$.
If we take $\eps \le \eta_1 2^{-1/2}$, then Beurling's theorem and Proposition~\ref{estBMsqrt}
give
$$ \pz ( \wtd \notin L) \le 2 \sqrt{ |Q_j| / \eps}. $$ 
Using conformal invariance and 
combining the last inequality with~(\ref{plarr}), we obtain that, if $|Q_j| \le \eps / 16$
and $d\theta(Q_j) \le \pi / 4$, then
\begin{equation*}
d( \phi(Q_j), \phi(L)) \le 32 \sqrt{ |Q_j| / \eps}.
\end{equation*}
Note that since the collection is nice, if we have $|Q_j| \le \pi / (4C_2)$, then this guarantees
that $d\theta(Q_j) \le \pi / 4$. \\

\noi {\bf Step 4.} We now show that $\phi(y)$ is far from $\phi(L)$. This is easy:
since $y \notin \la$, the arc distance between $\phi(y)$ and $\phi(L)$ must be at least
$ C(\eta_1, \rho_1 - \eps) \wedge C(\rho_2 + \eps, \eta_2) $. Hence
$$d(\phi(y), \phi(L)) \ge (2/\pi) C(\eta_1, \rho_1 - \eps) \wedge C(\rho_2 + \eps, \eta_2). $$

\noi {\bf Step 5.} It remains to show that $\phi(x)$ is far from $\phi(L)$.
Let $L' = \la[\rho_1 - 2\eps, \rho_2 + 2\eps]$ so $L'$ is a subset of $\la$ that is slightly
larger than $L$. We will show that the probability $p$ that a Brownian motion started at $x$ exits $D$ on
$L'$ is bounded above. 

If $\arg(x) \le -\pi / 4$, then clearly $p \le 1/2$.
Otherwise, we must have $|x| \le \rho_1 - \delta$ or $|x| \ge \rho_2 + \delta$.
Consider the first case and assume $\eps < \delta / 2$. See Figure \ref{fig-prop-app} for an illustration. Recall the definition of $u_{\rho_1-2\ep}$ from step 1, and let $\nu$ be the path contained in $\lambda$ which connects $\partial B(\eta_1)$ and $u_{\rho_1-2\ep}$. The path $\nu$ may divide $D\cap \{ \eta_1<|z|<\rho_1-\delta \}$ into several connected components, and observe that $x$ is in the component which intersects the positive real line by our assumption on $x$ in the statement of the proposition. The path $\lambda-\nu$ does not intersect $\nu$ by the assumptions on $\lambda$ in the definition of a $\lambda$-domain, and by definition of $u_{\rho_1-2\ep}$ the path $\lambda-\nu$ does not intersect the subset $L''$ of $\partial B(\rho_1-2\ep)\cap D$ which is to the right of $u_{\rho_1-2\ep}$. A Brownian motion started from $x$ which exits $D$ at $L'\subset \lambda-\nu$ must therefore cross $\partial B(\rho_1-2\ep)$ before it exits $D$. It follows that if a Brownian motion started at $x$ exits on $L'$, it hits the circle $\partial B(\rho_1-2\eps)$ before the circle $\partial B(\eta_1)$, so
$$ p \le \log(( \rho_1 - \delta ) / \eta_1 ) \, / \, \log(( \rho_1 - 2\eps ) / \eta_1 ). $$
A similar estimate holds in the second case, so we obtain that for all $x$,
$$ p \le K_1 \equiv K_1 ( \rho_1, \rho_2, \eta_1, \eta_2, \delta, \eps ) < 1. $$
Now, if a Brownian motion started at $\phi(x)$ does not exit on $\phi(L')$, then it must travel distance at least
$ d(\phi(x),\phi(L')^c)$ (where recall that $\phi(L')^c$ is the 
complement of $\phi(L')$ in the boundary of the unit disk).
Proposition~\ref{estunitdisk} gives
$$ 1 - K_1 \le 1-p \le 2 (1 - |\phi(x)|) \, / \, d(\phi(x),\phi(L')^c), $$
and this suffices to bound $ d(\phi(x),\phi(L)) $. Indeed, the last inequality gives
$$ d(\phi(x),\phi(L')^c) \le d(\phi(x),\phi(L)) \cdot 2 / (1-K_1), $$
but we also know that the arc distance between $\phi(L)$ and $\phi(L')^c$ is at least
$K_2 = C(\rho_1 - 2\eps, \rho_1 - \eps) \wedge C(\rho_2 + \eps, \rho_2 + 2\eps)$. Hence
by the triangle inequality,
$ d(\phi(x),\phi(L')^c) \ge (2/\pi) K_2 - d(\phi(x),\phi(L)) $, so finally
$$ d(\phi(x),\phi(L)) \ge (2/\pi) K_2 / (1 + 2/(1-K_1)). $$

\noi {\bf Step 6.}
Now all that is left is to combine the previous steps. Take $\eps = (1/3) \cdot 
\min(\rho_1 - \eta_1, \eta_2 - \rho_2, \delta, \eta_12^{-1/2})$.
Then $d(\phi(y),\phi(L)) \ge K_3$ and $d(\phi(x),\phi(L)) \ge K_3$ for some positive
constant $K_3$ that depends on $\eta_1, \eta_2, \rho_1, \rho_2$ and $\delta$.
If we also have
$$ |Q_j| \le K_4 := (\eps \cdot ((K_3  / 64)^2  \wedge ( 1 / 16)) ) 
\wedge (\pi / (4C_2)) \wedge (|D| / 2)$$
then $d(\wt x,\phi(L)) \le K_3 / 2$ for all $\wt x\in \phi(Q_j)$. Then from the triangle inequality, $d(\phi(Q_j),\phi(y)) \ge K_3 / 2$ and $d(\phi(Q_j),\phi(x)) \ge K_3 / 2$.

Let $N$ be the total number of rectangles hit. We can write $N = N_1 + N_2$, where
$N_1$ is the number of rectangles with diameter smaller than $K_4$ that are hit, and
$N_2$ is the number of rectangles with diameter larger than $K_4$ that are hit. Since
the collection $\cal Q$ is nice, all its sub-collections are also nice, so Lemma~\ref{tiles} gives 
$$ \pxy(N_1 \ge t) \le (32 / K_3^3\mu) \cdot \mu^t. $$
To estimate $N_2$, observe that if $Q_j$ has diameter larger than $K_4$, then
$$ K_4 \le |Q_j| \le dr(Q_j) + \rho_2 \cdot d\theta(Q_j) \le (1+C_2 \rho_2) \cdot dr(Q_j) $$
so the area of $Q_j$ is bounded below by
$$ \leb{Q_j} \ge \rho_1 \cdot dr(Q_j) \cdot d\theta(Q_j) 
\ge (\rho_1 C_1) ((K_4 / (1+C_2 \rho_2) )^2 \equiv K_5. $$
Since all rectangles $Q_j$ are disjoint and inside the annulus $ \{ \rho_1 \le |z| \le \rho_2 \} $, 
there are at most $K_6 = \pi (\rho_2^2 - \rho_1^2) / K_5 $ rectangles with diameter larger than $K_4$.
So $N_2 \le K_6$, and
$$ \pxy(N \ge t) \le\pxy(N_1 \ge t - K_6) \le (16 / K_3^3) \cdot \mu^{-K_6-1} \cdot \mu^t. $$
\end{proof}

\bibliography{brownian}

\end{document}